\theoremstyle{plain}
\newtheorem{theorem}                 {Theorem}      [section]
\newtheorem{corollary}    [theorem]  {Corollary}
\newtheorem{lemma}        [theorem]  {Lemma}
\newtheorem{proposition}  [theorem]  {Proposition}
\theoremstyle{definition}
\newtheorem{definition}   [theorem]  {Definition}
\newtheorem{example}      [theorem]  {Example}
\newtheorem{remark}       [theorem]  {Remark}
\numberwithin{equation}{section}
\def \cn{{\mathbb C}}
\def \hn{{\mathbb H}}
\def \H{{\mathbb H}}
\def \rn{{\mathbb R}}
\def \sn{{\mathbb S}}
\def \B{\mathcal B}
\def \E{\mathcal E}
\def \H{\mathcal H}
\def \M{\mathcal M}
\def\nab#1#2{\hbox{$\nabla$\kern -.3em\lower 1.0 ex
    \hbox{$#1$}\kern -.1 em {$#2$}}}
\def \Re{\mathfrak R\mathfrak e}
\def \g{\mathfrak{g}}
\def \GLR#1{\text{\bf GL}_{#1}(\rn)}
\def \glr#1{\mathfrak{gl}_{#1}(\rn)}
\def \GLC#1{\text{\bf GL}_{#1}(\cn)}
\def \glc#1{\mathfrak{gl}_{#1}(\cn)}
\def \GLH#1{\text{\bf GL}_{#1}(\hn)}
\def \SL2{\widetilde{\text{\bf SL}}_{2}(\rn)}
\def \SO#1{\text{\bf SO}(#1)}
\def \so#1{\mathfrak{so}(#1)}
\def \U#1{\text{\bf U}(#1)}
\def \u#1{\mathfrak{u}(#1)}
\def \SU#1{\text{\bf SU}(#1)}
\def \Sp#1{\text{\bf Sp}(#1)}
\def \sp#1{\mathfrak{sp}(#1)}
\DeclareMathOperator{\Div}{div} 
\DeclareMathOperator{\trace}{trace}
\numberwithin{equation}{section}
\begin{document}

\title[New iharmonic functions on the Lie groups $\SO n$, $\SU n$, $\Sp n$]
{New Biharmonic functions on the\\ compact Lie groups $\SO n$, $\SU
n$, $\Sp n$}


\author{Sigmundur Gudmundsson}
\address{Mathematics, Faculty of Science\\
University of Lund\\
Box 118, Lund 221\\
Sweden}
\email{Sigmundur.Gudmundsson@math.lu.se}

\author{Anna Siffert}
\address{Max Planck Institute for Mathematics\\
Vivatsgasse 7\\
53111 Bonn\\
Germany}
\email{siffert@mpim-bonn.mpg.de}

\begin{abstract}
We develop a new scheme for the construction of explicit complex-valued proper biharmonic functions on Riemannian Lie groups.  We exploit this and manufacture many infinite series of uncountable families of new solutions on the special unitary group $\SU n$.  We then show that the special orthogonal group $\SO n$ and the quaternionic unitary group $\Sp n$ fall into the scheme. As a by-product we obtain new harmonic morphisms on these groups.  All the constructed maps are defined on open and dense subsets of the corresponding spaces.  
\end{abstract}

\subjclass[2010]{31B30, 53C43, 58E20}

\keywords{Biharmonic functions, compact simple Lie groups}

\maketitle


\section{Introduction}\label{section-introduction}

In this paper we introduce a new method for constructing infinite families of explicit complex-valued proper biharmonic functions on the Riemannian Lie groups $\SO n$, $\SU n$ and $\Sp n$.
Although the literature on biharmonic functions is vast, the domains of the functions are typically either surfaces or open subsets of flat Euclidean space, with only very few exceptions, see for example \cite{Bai-Far-Oua}. 
The first 
proper biharmonic functions from open subsets of the classical compact simple Lie groups $\SO n$, $\SU n$ and $\Sp n$ have been constructed only recently in \cite{Gud-Mon-Rat-1} by Montaldo, Ratto and the first author.

\smallskip

We first generalise the constructions of \cite{Gud-Mon-Rat-1} and then use the so obtained biharmonic functions as the building blocks for the biharmonic functions constructed in this paper.  Namely, for each such function $f$, we prove that for any positive natural number $d$ there exists a polynomial of the form $$\Phi(f)=\sum_{k=0}^dc_kf^{d-k}\tau(f)^k$$ 
which is proper biharmonic.  Here $\tau(f)$ is the tension field of the function $f$.  We then show that these considerations can be generalised to multi-homogeneous polynomials in $f_1,\dots,f_{\ell}$, where the functions $f_i$ are of the same structure as $f$ above i.e. they are
generalisations of the biharmonic functions constructed in \cite{Gud-Mon-Rat-1}.

\smallskip

Using this construction method we obtain our main result.

\begin{theorem}\label{main-result}
Let $G$ be given by either $\SU n$, $\Sp n$ or $\SO n$ where $n\geq 2$ in the first case, $n\in\mathbb{N}$ in the second case and $n\geq 4$ in the last case.
Then for each choice of $(d_1,\dots,d_{n-1})\in\mathbb{N}^{n-1}$ there exist a proper biharmonic function defined on a dense subset of $G$.
\end{theorem}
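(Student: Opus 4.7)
The plan is to combine the two ingredients indicated in the introduction. First, on each of $\SU n$, $\Sp n$, $\SO n$ I would produce a family $f_1,\dots,f_{n-1}$ of complex-valued biharmonic building-block functions defined on an open dense subset of $G$, each a generalisation of those from \cite{Gud-Mon-Rat-1}. The essential property to extract is \emph{algebraic closure}: the quantities $\tau(f_i)$, $\ip{\grad f_i}{\grad f_j}$, $\ip{\grad f_i}{\grad \tau(f_j)}$ and $\ip{\grad \tau(f_i)}{\grad \tau(f_j)}$ should all be expressible as polynomials in the $f_i$ and $\tau(f_i)$. This closure is what will convert the fourth-order PDE $\tau^2(\Phi)=0$ into a finite-dimensional linear problem.

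Given $(d_1,\dots,d_{n-1})\in\mathbb{N}^{n-1}$, I would then form the multi-homogeneous ansatz
$$
\Phi=\sum_{\mathbf{k}} c_{\mathbf{k}}\,\prod_{i=1}^{n-1} f_i^{\,d_i-k_i}\,\tau(f_i)^{k_i},
$$
with $\mathbf{k}=(k_1,\dots,k_{n-1})$ and $0\le k_i\le d_i$, and compute $\tau(\Phi)$ and $\tau^2(\Phi)$ via the Leibniz rule together with the closure identities. Collecting coefficients in front of each monomial produces a finite linear system in the $c_{\mathbf{k}}$. The introduction already asserts solvability in the single-variable case, producing for each $d$ a polynomial $\sum_{k=0}^d c_k f^{d-k}\tau(f)^k$ that is proper biharmonic; multiplying $n-1$ such solutions, one in each $f_i$ with degree $d_i$, should then furnish an explicit candidate for multi-degree $(d_1,\dots,d_{n-1})$, provided the cross terms arising from distinct $f_i$ are compatible with the single-variable relations.

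To finish, I would check properness and density. Density is immediate, because each $f_i$ is defined off a proper real-analytic subvariety $Z_i\subset G$ and so $\Phi$ is defined on the open dense set $G\setminus\bigcup_i Z_i$. Properness amounts to verifying that neither $\Phi$ nor $\tau(\Phi)$ vanishes identically, which for so explicit an ansatz reduces to evaluation at a single generic point.

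The main obstacle is the algebraic closure step, especially the mixed terms $\ip{\grad f_i}{\grad \tau(f_j)}$ with $i\ne j$. This forces a highly coordinated choice of building blocks. In $\SU n$ one can draw the $f_i$ from a single column of the defining matrix so that the Maurer--Cartan relations cause the mixed gradients to collapse into the desired algebra; the cases $\Sp n$ and $\SO n$ must then be reduced to this unitary framework through their natural matrix realisations, as the abstract suggests. Once closure is secured, the linear-algebraic core of the argument is essentially a tensor product of the single-variable case.
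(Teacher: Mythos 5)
Your overall framework -- closure of the algebra generated by $f_i$ and $\tau(f_i)$ under $\tau$ and $\kappa$, a multi-homogeneous ansatz, and reduction of $\tau^2(\Phi)=0$ to a finite linear system in the coefficients $c_{\mathbf k}$ -- is exactly the paper's strategy (Lemma \ref{basic-identities} and Section \ref{the-general-case}). The gap is in how you propose to \emph{solve} that system. You suggest taking the product of $n-1$ single-variable proper biharmonic polynomials, one of degree $d_i$ in each pair $(f_i,\tau(f_i))$. This candidate fails already in the smallest case: with $d_1=d_2=1$ the closure identities $2\kappa(f_1,f_2)=f_1\tau(f_2)+\tau(f_1)f_2$ and $\kappa(f_i,\tau(f_j))=\tau(f_i)\tau(f_j)$ give
$$\tau(f_1f_2)=2\bigl(f_1\tau(f_2)+\tau(f_1)f_2\bigr),\qquad \tau^2(f_1f_2)=12\,\tau(f_1)\tau(f_2)\neq 0,$$
so the product of two proper biharmonic building blocks is not biharmonic, and more generally no product $(af_1+b\tau(f_1))(cf_2+d\tau(f_2))$ equals the actual bidegree-$(1,1)$ solution $4f_1f_2-3\tau(f_1)\tau(f_2)$ of Example \ref{example-V}. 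The cross terms $\kappa(F_i,F_j)$ genuinely couple the indices, so the multi-variable system is not a tensor product of the univariate ones; your own caveat (``provided the cross terms \dots are compatible'') is precisely the point that fails. What the paper does instead is treat the coupled system \eqref{lin-sys-two}, \eqref{lin-sys-2} directly and observe that it is a recurrence determining every $c_{\mathbf k}$ with $|\mathbf k|\ge 2$ from the coefficients $c_{\mathbf 0}$ and $c_{e_j}$, which remain free; solvability is therefore automatic and properness follows from choosing $c_{\mathbf 0}\neq 0$. Without some such triangularity argument, your ``finite linear system'' could a priori be overdetermined and force $c_{\mathbf 0}=0$, destroying properness.

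Two smaller points. First, the coordination of the building blocks is not that the $f_i$ come from a single column: in the paper $P_i$ is supported on the $i$-th column and $Q_\beta$ on the $\beta$-th, with the \emph{same} row vectors $p,q$ throughout; it is this sharing of $p$, $q$ and the common denominator that yields the mixed identities \eqref{PP-QQ}, and one then gets $n-1$ proper biharmonic blocks (those with $i\neq\beta$), matching the multi-index length in the statement. Second, for $\Sp n$ and $\SO n$ the paper does not reduce to the unitary case but re-verifies the axioms \eqref{PP-QQ} with $\mu=-1/2$ (and, for $\SO n$, under an extra isotropy hypothesis on $a,b$ or $p,q$ needed because $\kappa(x_{j\alpha},x_{k\beta})$ has an inhomogeneous term); your proposal does not account for that extra condition, which is where the restriction $n\ge 4$ enters.
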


As a by-product of our considerations we produce a wealth of new harmonic morphisms from the simple Lie groups $\SO n$, $\SU n$ and $\Sp n$.

\medskip

\noindent{\textbf{Organisation.}}
In Section\,\ref{section-preliminaries} we recall the definitions of biharmonic functions and harmonic morphisms.
The general setting is given in Section\,\ref{section-general-setting}.
In Sections\,\ref{section-SU(n)} to \ref{section-new-U(n)-II} we construct new biharmonic functions on $\SU n$.
We generalise these considerations in Section\,\ref{the-general-case} to  a larger collection of Lie groups.
We show in Sections\,\ref{section-Sp(n)} and \ref{section-SO(n)} 
that $\Sp n$ and $\SO n$ are contained in this larger set of Lie groups and thus construct new proper biharmonic functions on both $\Sp n$ and $\SO n$.
Finally, we provide new harmonic morphisms on $\SO n$, $\SU n$ and $\Sp n$ in Section\,\ref{harmonic-morphisms}.

\medskip

\noindent{\textbf{Acknowledgements.}}
The second author would like to thank the Max Planck Institute for Mathematics in Bonn for support and providing excellent working conditions.


\section{Preliminaries}\label{section-preliminaries}

Let $(M,g)$ be a smooth manifold equipped with a Riemannian metric $g$.
We complexify the tangent bundle $TM$ of $M$ to $T^{\cn}M$ and extend
the metric $g$ to a complex-bilinear form on $T^{\cn}M$.  Then the
gradient $\nabla f$ of a complex-valued function $f:(M,g)\to\cn$ is a
section of $T^{\cn}M$.  In this situation, the well-known linear
{\it Laplace-Beltrami} operator (alt. {\it tension} field) $\tau$ on $(M,g)$
acts on $f$ as follows
$$
\tau(f)=\Div (\nabla f)=\frac{1}{\sqrt{|g|}}
\frac{\partial}{\partial x_j}\left(g^{ij}\, \sqrt{|g|}\,
\frac{\partial f}{\partial x_i}\right).
$$
For two complex-valued functions $f,h:(M,g)\to\cn$ we have the
following well-known relation
\begin{equation}\label{equation-basic}
\tau(fh)=\tau(f)\,h+2\,\kappa(f,h)+f\,\tau(h),
\end{equation}
where the {\it conformality} operator $\kappa$ is given by
$$
\kappa(f,h)=g(\nabla f,\nabla h).
$$
The fact that the operator $k$ is bilinear and the basic property
$$\nabla(f\tilde{f})=\nabla(f)\tilde{f}+f\,\nabla(\tilde{f})$$ of $\nabla$ show that
\begin{equation}\label{kappa-basic}
\begin{aligned}
\kappa(f\tilde{f},h\tilde{h})
&=\tilde{f}\tilde{h}\,\kappa(f,h)+\tilde{f}h\,\kappa(f,\tilde{h})\\
&\qquad +f\tilde{h}\,\kappa(\tilde{f},h)+fh\,\kappa(\tilde{f},\tilde{h}),
\end{aligned}
\end{equation}
where $\tilde{f},\tilde{h}:(M,g)\to\cn$ are complex-valued functions.

\smallskip

For a positive integer $r$, the iterated Laplace-Beltrami operator
$\tau^r$ is defined by
$$
\tau^{0} (f)=f,\quad \tau^r (f)=\tau(\tau^{(r-1)}(f)).
$$
\begin{definition}\label{definition-proper-r-harmonic} For a positive
integer $r$, we say that a complex-valued function $f:(M,g)\to\cn$ is
\begin{enumerate}
\item[(a)] {\it $r$-harmonic} if $\tau^r (f)=0$,
\item[(b)] {\it proper $r$-harmonic} if $\tau^r (f)=0$ and
$\tau^{(r-1)}(f)$ does not vanish identically.
\end{enumerate}
\end{definition}

It should be noted that the {\it harmonic} functions are exactly the
$1$-harmonic and the {\it biharmonic} functions are the $2$-harmonic
ones. By a proper harmonic function we mean a non-constant harmonic function.  In some texts, the $r$-harmonic functions are also called polyharmonic of order $r$.

\begin{remark}
We would like to remind the reader of the fact that a complex-valued function $f:(M,g)\to\cn$ from a Riemannian manifold is a harmonic morphism if it is {\it harmonic} and {\it horizontally conformal} i.e. 
$$\tau(f)=0\ \ \text{and}\ \ \kappa(f,f)=0.$$
The standard reference on this topic is the book \cite{Bai-Woo-book} of Baird and Wood.   We also recommend the regularly updated online bibliography \cite{Gud-bib}.
\end{remark}


\section{The General Setting}\label{section-general-setting}
In this paper we construct biharmonic functions which are rational functions defined on compact Lie groups.
In the first subsection we express $\tau$ and $\kappa$ of rational functions $f=P/Q$ with domains being compact Lie groups in terms of $\tau(P)$, $\tau(Q)$, $\kappa(P,Q)$ and $\kappa(Q,Q)$.
In the second subsection we recall general formulas for $\tau$ and $\kappa$ for functions with domains being compact Lie groups. 
The idea is to use these results to simplify those of the first subsection.
At this point it does not make sense to do so in full generality.
In latter sections we will consider the cases  $\SO n$, $\SU n$ and $\Sp n$ separately, and proceed as indicated above.

\subsection{$\tau$ and $\kappa$ for rational functions on compact Lie groups}

Throughout this paper we work with rational functions of  the complex-valued matrix coefficients of the irreducible standard representations of the compact Lie groups $\SO n$, $\SU n$ and $\Sp n$.

\smallskip

Let $P,Q:G\to\cn$ be two complex-valued functions, $G^*$ be the open and dense subset of $G$ with $$G^*=\{p\in G|\ Q(p)\neq 0\}$$ and $f:G^*\to\cn$ be defined by $f=P/Q$.  
Then a simple calculation, using (\ref{kappa-basic}), shows that the conformality operator $\kappa(f,f)$ satisfies
\begin{eqnarray}\label{kappa-quotient}
Q^4\kappa(f,f)=Q^2\kappa(P,P)-2PQ\kappa(P,Q)+P^2\kappa(Q,Q).
\end{eqnarray}
A similar computation tells us that the tension field $\tau(f)$ fulfils
\begin{eqnarray}\label{tau-quotient}
Q^3\tau(f)=Q^2\tau(P)-2Q\,\kappa(P,Q)+2P\,\kappa(Q,Q)-PQ\,\tau(Q).
\end{eqnarray}

Let $\{e_1,e_2,\dots ,e_n\}$ be a basis for an $n$-dimensional vector space $V$ and
$$\M=\{f_{j\alpha}|\ 1\le j,\alpha\le n\}$$ 
be the set of the matrix coefficients of the action of $G$ on $V$ 
with respect to this basis.  Then it is a consequence of the 
Peter-Weyl theorem that the elements of $\M$ are eigenfunctions 
of the Laplace-Beltrami operator on $G$ all with the same eigenvalue.
Further let $p,q\in\cn^{n\times n}$ and define the two
functions $P,Q:G\to\cn$ by
$$P(g)=\sum_{\alpha,\beta}p_{\alpha\beta}\cdot f_{\alpha\beta}(g)
\ \ \text{and}\ \
Q (g)=\sum_{k,\beta}q_{k\beta}\cdot f_{k\beta}(g).$$
Now that $P,Q:G\to\cn$ are eigenfunctions of the tension field  with the same eigenvalue it follows from equation (\ref{tau-quotient}) that
\begin{equation}\label{tau-quotient-special}
Q^3\tau(f)=2P\,\kappa(Q,Q)-2Q\,\kappa(P,Q).
\end{equation}
\vskip .1cm

\subsection{$\tau$ and $\kappa$ for functions on compact Lie groups}\label{subsection2-general}
Let $G$ be a compact Lie group with Lie algebra $\g$ and $G\to\text{End}(V)$ 
be a faithful irreducible finite dimensional representation of 
$G$.  Then we can identify $G$ with a compact subgroup 
of the general linear group $\GLC n$ where $n$ is the 
dimension of the vector space $V$.  

\medskip

If $Z\in\g$ is a left-invariant vector field on $G$ and $h:U\to\cn$ is a
complex-valued function locally defined on $G$ then the first and
second order derivatives satisfy
\begin{equation*}\label{eq:derivativeZ}
	Z(h)(p)=\frac {d}{ds}[h(p\cdot\exp(sZ))]\big|_{s=0},
\end{equation*}
\begin{equation*}\label{eq:derivativeZZ}
	Z^2(h)(p)=\frac {d^2}{ds^2}[h(p\cdot\exp(sZ))]\big|_{s=0}.
\end{equation*}

The Lie algebra $\glc n$ of the general linear group $\GLC n$ 
can be identified with the set of complex $n\times n$ matrices. 
This carries a natural Euclidean scalar product
$$
g(Z,W)=\Re\trace (Z\cdot W^*)
$$
which induces a left-invariant Riemannian metric $g$ on $\GLC n$.  
Employing the Koszul formula for the Levi-Civita connection $\nabla$
on $(\GLC n,g)$ we see that
\begin{eqnarray*}
	g(\nab ZZ,W)&=&g([W,Z],Z)\\
	&=&\Re\trace (WZ-ZW)Z^*\\
	&=&\Re\trace W(ZZ^*-Z^*Z)^*\\
	&=&g([Z,Z^*],W).
\end{eqnarray*}
Let $[Z,Z^*]_\g$ be the orthogonal projection of the bracket
$[Z,Z^*]$ onto the subalgebra $\g$ of $\glc n$.  Then the above
calculations shows that $$\nab ZZ=[Z,Z^*]_\g.$$ This implies that
the tension field $\tau$ and the conformality operator $\kappa$ are
given by
\begin{equation}\label{tau-kappa-alie-groups}
\tau(h)=\sum_{Z\in\B}Z^2(h)-[Z,Z^*]_\g(f)
\ \ \text{and}\ \
\kappa(h,\tilde{h})=\sum_{Z\in\B}Z(h)Z(\tilde{h}),
\end{equation}
where $\B$ is any orthonormal basis for the Lie algebra $\g$
and $\tilde{h}:U\to\cn$ a
complex-valued function locally defined on $G$.


\section{The Special Unitary Group $\SU n$}\label{section-SU(n)}
This section mainly serves as a preparation for the two sections to follow. First we present some preliminaries and then provide formulae for the tension field $\tau$ and the conformality operator $\kappa$ on $\U n$. Afterwards we construct complex-valued proper biharmonic functions on open and dense subsets of the special orthogonal group $\SU n$.  They are quotients of first order homogeneous polynomials in the matrix coefficients of its standard representation. These results generalise some of those contained in \cite{Gud-Mon-Rat-1}.

\medskip

The unitary group $\U n$ is the compact subgroup of the complex general linear group $\GLC n$ given by
$$
\U n=\{z\in\GLC{n}|\ z\, z^*=I_n\}
$$
with its standard irreducible matrix representation
$$
z=\begin{bmatrix}
z_{11} & \cdots & z_{1n}\\
\vdots & \ddots & \vdots \\
z_{n1} & \cdots & z_{nn}
\end{bmatrix}.
$$
The circle group $\sn^1=\{e^{i\theta}\in\cn | \ \theta\in\rn\}$ acts
on the unitary group $\U n$ by multiplication
$$
(e^{i\theta},z)\mapsto e^{i\theta}\, z
$$
and the orbit space of this action is the special unitary group
$$
\SU n=\{ z\in\U n|\ \det z = 1\}.
$$
The natural projection $\pi:\U n\to\SU n$ is a harmonic morphism with
constant dilation $\lambda\equiv 1$. This has the following interesting consequence.

\begin{proposition}
Let $f:U\to\cn$ be a complex-valued function defined locally on the special unitary group $\SU n$
and $\pi:\U n\to\SU n$ be the natural projection.  Then the composition $f\circ\pi:\pi^{-1}(U)\to\cn$
is a harmonic morphism on $\U n$ if and only if $f:U\to\cn$ is a harmonic morphism on $\SU n$.
\end{proposition}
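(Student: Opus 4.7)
The plan is to reduce both constituent conditions for being a harmonic morphism, namely $\tau(\cdot)=0$ and $\kappa(\cdot,\cdot)=0$, from $f\circ\pi$ on $\U n$ to $f$ on $\SU n$ by invoking the composition behaviour under the harmonic morphism $\pi$. The hypothesis that $\pi$ has constant dilation $\lambda\equiv 1$ is what makes the reduction clean: the general pull-back formulas collapse to plain equalities without any dilation factor.

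Concretely, I would first establish the two identities
$$\tau(f\circ\pi)=\lambda^{2}\,\tau(f)\circ\pi,\qquad \kappa(f\circ\pi,f\circ\pi)=\lambda^{2}\,\kappa(f,f)\circ\pi,$$
valid for any harmonic morphism $\pi$ of dilation $\lambda$ and any complex-valued $f$ locally defined on the target. The tension field identity follows from the chain rule combined with the fact that a harmonic morphism is itself both harmonic and horizontally conformal; the conformality identity is an immediate consequence of horizontal conformality applied to the gradients $\nabla f$ and $\nabla(f\circ\pi)$. Specialising to $\lambda\equiv 1$ they reduce to
$$\tau(f\circ\pi)=\tau(f)\circ\pi,\qquad \kappa(f\circ\pi,f\circ\pi)=\kappa(f,f)\circ\pi.$$

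To finish, I would invoke the surjectivity of $\pi:\pi^{-1}(U)\to U$, which upgrades the vanishing of the left-hand sides on $\pi^{-1}(U)$ to the vanishing of $\tau(f)$ and $\kappa(f,f)$ on $U$. Combining the two equivalences yields the required biconditional. There is no real obstacle: the entire content is packaged in the composition law for harmonic morphisms, which can be cited from the Baird-Wood reference already recalled in the preliminaries. A fully self-contained derivation of the two pull-back identities is also available via the explicit formulas in (\ref{tau-kappa-alie-groups}), by choosing an orthonormal basis of $\u n$ that splits as the one-dimensional vertical direction spanned by a suitable unit multiple of $iI_{n}$ together with a horizontal orthonormal basis of $\su n$; vertical derivatives of $f\circ\pi$ vanish identically because $f\circ\pi$ is constant along fibres, and the vertical drift term $[iI_{n},(iI_{n})^{*}]$ is zero, so the $\U n$-expressions for $\tau(f\circ\pi)$ and $\kappa(f\circ\pi,f\circ\pi)$ collapse to the $\SU n$-expressions for $\tau(f)$ and $\kappa(f,f)$ pulled back by $\pi$.
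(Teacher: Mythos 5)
Your proposal is correct and follows essentially the same route as the paper: the paper's proof simply cites the composition law for harmonic morphisms with constant dilation (Proposition 2.2 of Gudmundsson--Montaldo--Ratto) together with $\lambda\equiv 1$, which is exactly the pull-back identity $\tau(f\circ\pi)=\lambda^{2}\,\tau(f)\circ\pi$ and its analogue for $\kappa$ that you derive, plus surjectivity of $\pi$ for the converse direction.
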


\begin{proof}
Since the natural projection $\pi:\U n\to \SU n$ is a harmonic morphism the statement follows
directly from Proposition 2.2 of \cite{Gud-Mon-Rat-1} and the fact that the dilation $\lambda$ of $\pi$ satisfies $\lambda\equiv 1$.
\end{proof}

\medskip

Below we use the results of Section\,\ref{subsection2-general} to describe  the tension field $\tau$ and the conformality operator $\kappa$ on the unitary group $\U n$.
This has already been done in \cite{Gud-Sak-1} but we include these considerations for reasons of completeness.

For $1\le r,s\le n$, we shall by $E_{rs}$ denote the real $n\times n$ matrix given by
$$
(E_{rs})_{\alpha\delta}=\delta_{r \alpha}\delta_{s\beta}
$$
and for $r<s$ let $X_{rs},Y_{rs}$ be the symmetric and skew-symmetric matrices
$$
X_{rs}=\frac 1{\sqrt 2}(E_{rs}+E_{sr}),\ \ Y_{rs}=\frac 1{\sqrt 2}(E_{rs}-E_{sr}),
$$ 
respectively.  Further let $D_r$ be the diagonal elements with $$D_r=E_{rr}.$$ 

The standard representation of the Lie algebra $\u n$ of the
unitary group $\U n$ satisfies
$$
\u{n}=\{Z\in\cn^{n\times n}|\ Z+Z^*=0\}
$$
and for this we have the canonical orthonormal basis
$$
\B=\{i X_{rs}, Y_{rs}|\ 1\le r<s\le n\}\cup\{iD_r|\ r=1,\dots ,n\}.
$$
All the elements $Z\in\B$ fulfil the condition $[Z,Z^*]=0$ so it
follows from above that the
Levi-Civita connection satisfies $$\nab ZZ=0.$$ This implies that
for the Laplace-Beltrami operator $\tau$ on the unitary group $\U n$
we have 
$$\tau(f)=\sum_{Z\in\B}Z^2(f).$$

The following result was established in \cite{Gud-Sak-1}.  It describes important properties of the tension field $\tau$ and the conformality operator $\kappa$ on the unitary group $\U n$.  

\begin{lemma}\label{lemma-U(n)}
For $1\le j,\alpha\le n$, let $z_{j\alpha}:\U n\to\cn$ be the complex-valued matrix
coefficients of the standard representation of $\U n$. Then the following relations hold
\begin{equation*}
\tau(z_{j\alpha})= -n\cdot z_{j\alpha}\ \ \text{and}\ \
\kappa(z_{j\alpha},z_{k\beta})= -z_{k\alpha}z_{j\beta}.\\
\end{equation*}
\end{lemma}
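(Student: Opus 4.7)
The plan is to compute both quantities directly from the formulas established in Section~\ref{subsection2-general}. Since all basis elements $Z\in\B$ satisfy $[Z,Z^*]=0$, the expressions in \eqref{tau-kappa-alie-groups} reduce to
$$\tau(z_{j\alpha})=\sum_{Z\in\B}Z^2(z_{j\alpha}), \qquad \kappa(z_{j\alpha},z_{k\beta})=\sum_{Z\in\B}Z(z_{j\alpha})Z(z_{k\beta}).$$
First I would observe that for a left-invariant field $Z\in\u n$ and a matrix coefficient $z_{j\alpha}$, the derivative formula gives
$$Z(z_{j\alpha})(p)=\frac{d}{ds}\Big|_{s=0}(p\exp(sZ))_{j\alpha}=(pZ)_{j\alpha}=\sum_{a}p_{ja}Z_{a\alpha},$$
and similarly $Z^2(z_{j\alpha})(p)=(pZ^2)_{j\alpha}$. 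This reduces everything to computing the purely algebraic sums $S:=\sum_{Z\in\B}Z^2$ and $T_{a\alpha,b\beta}:=\sum_{Z\in\B}Z_{a\alpha}Z_{b\beta}$.

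For the tension field I would compute $S$ block by block. Using $(E_{rs}+E_{sr})^2=E_{rr}+E_{ss}$ and $(E_{rs}-E_{sr})^2=-(E_{rr}+E_{ss})$, one obtains $(iX_{rs})^2=-\tfrac12(E_{rr}+E_{ss})$ and $Y_{rs}^2=-\tfrac12(E_{rr}+E_{ss})$, while $(iD_r)^2=-E_{rr}$. Summing, each $E_{rr}$ appears $n-1$ times from the off-diagonal part and once from the diagonal part, so $S=-nI_n$. Hence $\tau(z_{j\alpha})(p)=(p\cdot(-nI_n))_{j\alpha}=-n\,z_{j\alpha}(p)$.

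For the conformality operator the key computation is $T_{a\alpha,b\beta}$. Plugging in the explicit matrix entries $(iX_{rs})_{a\alpha}=\tfrac{i}{\sqrt2}(\delta_{ra}\delta_{s\alpha}+\delta_{sa}\delta_{r\alpha})$ and the corresponding expressions for $Y_{rs}$ and $iD_r$, the diagonal squared terms in the products of $iX_{rs}$ with itself and of $Y_{rs}$ with itself cancel, while the cross terms add constructively, leaving
$$\sum_{r<s}\bigl[(iX_{rs})_{a\alpha}(iX_{rs})_{b\beta}+(Y_{rs})_{a\alpha}(Y_{rs})_{b\beta}\bigr]=-\delta_{a\beta}\delta_{\alpha b}\quad\text{for }\alpha\neq\beta,$$
and is zero otherwise. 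The diagonal basis vectors $iD_r$ supply exactly the missing contribution when $\alpha=\beta=a=b$. Combining these one gets $T_{a\alpha,b\beta}=-\delta_{a\beta}\delta_{\alpha b}$. Therefore
$$\kappa(z_{j\alpha},z_{k\beta})(p)=\sum_{a,b}p_{ja}p_{kb}T_{a\alpha,b\beta}=-p_{j\beta}p_{k\alpha}=-z_{j\beta}(p)\,z_{k\alpha}(p).$$

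The whole proof is a routine bookkeeping calculation; the only mild obstacle is the $\kappa$ identity, where one must carefully separate the off-diagonal $(iX_{rs},Y_{rs})$ contributions from the diagonal $(iD_r)$ ones and check that the four possible index-coincidence cases (whether or not $\alpha=\beta$, etc.) combine cleanly into the single expression $-\delta_{a\beta}\delta_{\alpha b}$. Once that is done, substituting back gives the two claimed formulas immediately.
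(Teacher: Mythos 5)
Your proof is correct, and it is the natural direct computation: the paper itself gives no argument for this lemma, simply citing \cite{Gud-Sak-1}, so your calculation supplies exactly what is delegated to that reference, using the basis of $\u n$ and the formulas for $\tau$ and $\kappa$ from Section \ref{subsection2-general}. The two key algebraic facts you isolate, namely $\sum_{Z\in\B}Z^2=-nI_n$ and $\sum_{Z\in\B}Z_{a\alpha}Z_{b\beta}=-\delta_{a\beta}\delta_{\alpha b}$ (with the diagonal elements $iD_r$ covering precisely the case $a=b=\alpha=\beta$ that the pairs $r<s$ miss), both check out, so no changes are needed.
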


\smallskip

The next stated result is a direct consequence of Lemma \ref{lemma-U(n)}.

\begin{lemma}\label{lemma-columns-dependent}
Let $M_Q$ be the following non-zero complex matrix
\begin{equation*}
M_Q=
\begin{bmatrix}
q_{11} & \cdots & q_{1n} \\
\vdots & \ddots & \vdots \\
q_{n1} & \cdots & q_{nn}
\end{bmatrix}\ \
\end{equation*}
and $Q:\U n\to\cn$ be the polynomial function on the unitary group given by
$$
Q(z)=\sum_{j,\alpha}q_{j\alpha}z_{j\alpha}.
$$
Then the equation $Q^2+\kappa(Q,Q)=0$ is fulfilled if and only if the columns of $M_Q$ are pairwise linearly dependent.
\end{lemma}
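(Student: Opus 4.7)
The plan is to reduce the identity $Q^2+\kappa(Q,Q)=0$ to a statement about the vanishing of the $2\times 2$ minors of $M_Q$, and then to exploit Peter--Weyl to transfer information between functions on $\U n$ and coefficient matrices.

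First, using the bilinearity of $\kappa$ together with Lemma \ref{lemma-U(n)}, I would compute
\[
Q^2+\kappa(Q,Q)=\sum_{j,k,\alpha,\beta}q_{j\alpha}q_{k\beta}\bigl(z_{j\alpha}z_{k\beta}-z_{k\alpha}z_{j\beta}\bigr).
\]
After relabelling $(j,k)$ in the second term and symmetrising in the pairs $\{j,k\}$ and $\{\alpha,\beta\}$, this rewrites as
\[
Q^2+\kappa(Q,Q)=2\sum_{j<k}\sum_{\alpha<\beta}m_{jk,\alpha\beta}\cdot\zeta_{jk,\alpha\beta},
\]
where $m_{jk,\alpha\beta}=q_{j\alpha}q_{k\beta}-q_{k\alpha}q_{j\beta}$ is the $2\times 2$ minor of $M_Q$ on rows $j,k$ and columns $\alpha,\beta$, and $\zeta_{jk,\alpha\beta}=z_{j\alpha}z_{k\beta}-z_{j\beta}z_{k\alpha}$ is the corresponding minor of $z$.

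For the ``if'' direction, I would observe that pairwise linear dependence of the columns of $M_Q$ is precisely the statement that, for every pair of column indices $\alpha\neq\beta$, the $n\times 2$ submatrix formed by columns $\alpha$ and $\beta$ has rank at most one, i.e.\ all minors $m_{jk,\alpha\beta}$ vanish. By the displayed formula above, this immediately gives $Q^2+\kappa(Q,Q)=0$.

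For the ``only if'' direction, the key ingredient is linear independence of the functions $\{\zeta_{jk,\alpha\beta}\mid j<k,\ \alpha<\beta\}$ on $\U n$. These are precisely the matrix coefficients of the exterior square $\wedge^2 V$ of the standard representation, which is an irreducible representation of $\U n$; by the Peter--Weyl theorem its matrix coefficients are linearly independent functions on $\U n$. Consequently the identity $\sum_{j<k,\,\alpha<\beta}m_{jk,\alpha\beta}\zeta_{jk,\alpha\beta}=0$ on $\U n$ forces every $m_{jk,\alpha\beta}$ to vanish; applied with fixed $\alpha<\beta$ and varying $j<k$, this says that columns $\alpha$ and $\beta$ of $M_Q$ are linearly dependent, completing the proof.

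The only delicate point is the linear independence of the $\zeta_{jk,\alpha\beta}$, and this is handled cleanly by Peter--Weyl once one recognises them as the matrix coefficients of the irreducible representation $\wedge^2 V$; the rest is routine index manipulation.
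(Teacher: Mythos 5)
Your proposal is correct and follows essentially the same route as the paper, which reduces the identity to the vanishing of $\sum_{j,k,\alpha,\beta}(q_{j\alpha}q_{k\beta}-q_{k\alpha}q_{j\beta})z_{j\alpha}z_{k\beta}$ and reads off the pairwise dependence of the columns from the $2\times 2$ minors. The only difference is that you explicitly justify the ``only if'' direction by identifying the antisymmetrised products $\zeta_{jk,\alpha\beta}$ as matrix coefficients of the irreducible representation $\wedge^2 V$ and invoking Peter--Weyl for their linear independence --- a step the paper leaves implicit with ``follows easily''.
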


\begin{proof}
The statement follows easily from
$$
Q^2+\kappa(Q,Q)=\sum_{j,k,\alpha,\beta}
(q_{j\alpha}q_{k\beta}-q_{k\alpha}q_{j\beta})	z_{j\alpha}z_{k\beta}=0.
$$
\end{proof}

Note that in this paper we will always assume that the equation $Q^2+\kappa(Q,Q)=0$ is fulfilled.
This is done for reasons of simplicity. 
The purpose of the following discussion is to explain how the polynomial functions $P,Q:\U n\to\cn$ are chosen in different situations in the remainder of this paper.

\smallskip 

Let us assume that the function $Q:\U n\to\cn$ in Lemma  \ref{lemma-columns-dependent} satisfies 
$$Q^2+\kappa(Q,Q)=0$$ and that the $\beta$-th column vector of $M_Q$ is non-zero.  Then there exists a non-zero vector $a=(a_1,a_2,\dots ,a_n)\in\cn^n$ such that $M_Q$ is of the form
\begin{equation*}
M_Q=
\begin{bmatrix}
a_1q_{1\beta} & a_2q_{1\beta} & \cdots & a_nq_{1\beta} \\
a_1q_{2\beta} & a_2q_{2\beta} & \cdots & a_nq_{2\beta} \\
\vdots        & \vdots        & \vdots & \vdots        \\	
a_1q_{n\beta} & a_2q_{n\beta} & \cdots & a_nq_{n\beta}
\end{bmatrix}.
\end{equation*}
Hence there exists a non-zero complex vector, namely 
$$
q=(q_1,q_2,\dots ,q_n)=(q_{1\beta},\dots,q_{n\beta}),
$$ 
such that the function $Q$ is of the form 
$$
Q(z)=\sum_{j,\alpha} q_{j}a_\alpha z_{j\alpha}.
$$

The next theorem shows how the standard representation of $\U n$ can
be used to produce proper biharmonic functions on the special unitary group $\SU n$. This result generalises Theorem 4.2 in \cite{Gud-Mon-Rat-1}.  Although its proof can be obtained by easy modifications of the original one we provide here another one for the reader's convenience.

\begin{theorem}\label{theorem-SU(n)}
Let $a,q\in\cn^n$ be two non-zero vectors and $M_P$ be the following non-zero complex matrix
\begin{equation*}
M_P=
\begin{bmatrix}
p_{11} & \cdots & p_{1n}\\
\vdots & \ddots & \vdots\\
p_{n1} & \cdots & p_{nn}
\end{bmatrix}.
\end{equation*}
Further let the polynomial functions $P,Q:\U n\to\cn$ be given by
$$P(z)=\sum_{j,\alpha}p_{j\alpha}z_{j\alpha}\ \ \text{and}\ \ Q(z)=\sum_{k,\beta}q_{k}a_\beta z_{k\beta}$$
and the rational function $f=P/Q$ be defined on the open and dense subset $\{z\in\U n|\ Q(z)\neq 0\}$ of $\U n$.  Then we have the following.
\begin{enumerate}
\item[(1)] The function $f$ is harmonic if and only if $PQ+\kappa(P,Q)=0$.  This is equivalent to (i) the vector $q$ and each column vector of the matrix $M_P$ are linearly dependent or (ii) the vector $a$ and the matrix $M_P$ are of the following special form
$$a=[0,\dots,0,a_{\beta_0},0,\dots,0],$$
\begin{equation*}
M_P=
\begin{bmatrix}
     0 & \cdots & 0      & p_{1\beta_0}  & 0      & \cdots & 0 \\
\vdots & \vdots & \vdots & \vdots & \vdots & \vdots & 0 \\
     0 & \cdots & 0      & p_{n\beta_0}  & 0      & \cdots & 0 \\
\end{bmatrix}.
\end{equation*}
\item[(2)] The function $f$ is proper biharmonic if and only if
$PQ+\kappa(P,Q)\neq 0$ i.e. if and only if neither (i) nor (ii) of (1) is satisfied.  
\end{enumerate}
The corresponding statements hold for the function induced on the special unitary group $\SU n$.
\end{theorem}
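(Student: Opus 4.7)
The plan is to reduce both parts of the theorem to polynomial identities in the matrix coefficients $z_{j\alpha}$ by combining the simplified identity (\ref{tau-quotient-special}) with Lemma \ref{lemma-U(n)}. The key preliminary observation is that $M_Q=qa^{T}$ has rank at most one, so its columns are pairwise linearly dependent, and Lemma \ref{lemma-columns-dependent} yields $Q^{2}+\kappa(Q,Q)=0$. Substituting $\kappa(Q,Q)=-Q^{2}$ into (\ref{tau-quotient-special}) gives
$$Q^{2}\,\tau(f)=-2\bigl(PQ+\kappa(P,Q)\bigr)$$
on the dense open set $\{Q\neq 0\}$, so the harmonicity of $f$ in part (1) reduces to the polynomial identity $PQ+\kappa(P,Q)=0$, and $\tau(f)$ is exhibited as a rational function of degree-two numerator and denominator.

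For the case analysis I would expand, using $\kappa(z_{j\alpha},z_{k\beta})=-z_{k\alpha}z_{j\beta}$,
$$PQ+\kappa(P,Q)=\sum_{j,k,\alpha,\beta}p_{j\alpha}\,q_{k}\,a_{\beta}\bigl(z_{j\alpha}z_{k\beta}-z_{k\alpha}z_{j\beta}\bigr).$$
The $2\times 2$ minors $z_{j\alpha}z_{k\beta}-z_{k\alpha}z_{j\beta}$ with $j<k$ and $\alpha<\beta$ are the matrix coefficients of the irreducible $\U n$-representation $\Lambda^{2}\cn^{n}$ and are therefore linearly independent on $\U n$ by the Peter-Weyl theorem. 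Collecting the coefficient of each minor, $PQ+\kappa(P,Q)=0$ becomes the linear system
$$(p_{j\alpha}q_{k}-p_{k\alpha}q_{j})\,a_{\beta}=(p_{j\beta}q_{k}-p_{k\beta}q_{j})\,a_{\alpha}\qquad(j<k,\ \alpha<\beta).$$
Reading this as saying that, for each $j<k$, the vector with $\alpha$-entry $p_{j\alpha}q_{k}-p_{k\alpha}q_{j}$ is proportional to $a$, a short case split separates the two alternatives of (1): either the bivector $m_{\alpha}\wedge q$ vanishes for every column $m_{\alpha}$ of $M_P$, yielding (i), or a non-vanishing $m_{\alpha}\wedge q$ forces both $a$ and $M_P$ to be supported at a single common index $\beta_{0}$, yielding (ii).

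For part (2) I would show that $f$ is always biharmonic, independently of $M_P$. Writing $A:=PQ+\kappa(P,Q)$, the formula above exhibits $\tau(f)=-2A/Q^{2}$ as a ratio of homogeneous degree-two polynomials in the $z_{j\alpha}$. Applying the product rules (\ref{equation-basic}) and (\ref{kappa-basic}) together with Lemma \ref{lemma-U(n)}, one computes $\tau(A)$, $\tau(Q^{2})$, $\kappa(A,Q^{2})$ and $\kappa(Q^{2},Q^{2})$, and substituting these into (\ref{tau-quotient}) applied to $A/Q^{2}$ should yield $\tau^{2}(f)=0$. The main obstacle is organising this second round of bookkeeping: the required cancellations all rely on the rank-one identity $Q^{2}+\kappa(Q,Q)=0$ and on the special product structure of $Q$, so the degree-four computation must be arranged around those relations to stay finite. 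With biharmonicity established, $f$ is proper biharmonic exactly when $\tau(f)\neq 0$, i.e.\ when $PQ+\kappa(P,Q)\neq 0$, which is precisely when neither (i) nor (ii) holds. The corresponding statement on $\SU n$ then follows from the proposition preceding this theorem, since $\pi:\U n\to\SU n$ is a harmonic morphism of constant dilation $\lambda\equiv 1$ and therefore commutes with $\tau$.
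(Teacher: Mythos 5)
Your reduction of part (1) is the paper's own route: starting from (\ref{tau-quotient-special}), using that $M_Q=qa^{t}$ has rank one so that Lemma \ref{lemma-columns-dependent} gives $\kappa(Q,Q)=-Q^{2}$, and arriving at $Q^{2}\tau(f)=-2\bigl(PQ+\kappa(P,Q)\bigr)$. Your handling of the resulting polynomial identity is actually more careful than the paper's, which tacitly treats the monomials $z_{j\alpha}z_{k\beta}$ as independent; the correct condition is the antisymmetrized system you write down, namely that for each $j<k$ the vector $v^{(jk)}$ with entries $p_{j\alpha}q_{k}-p_{k\alpha}q_{j}$ satisfies $v^{(jk)}\wedge a=0$. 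But the "short case split" you then invoke is precisely where the work is, and it is not carried out. Note that proportionality of $v^{(jk)}$ to $a$ admits solutions beyond (i) and (ii): if $M_P=\tilde{p}\,a^{t}$ with $\tilde{p}$ not proportional to $q$ and $a$ having at least two nonzero entries, then (by the $\kappa$-identities of Lemma \ref{basic-un} with $b=a$ one gets $\kappa(P,Q)=-PQ$) the function $f$ is harmonic although neither (i) nor (ii) holds. So the dichotomy cannot be dispatched as routinely as you suggest; any complete argument has to confront this case explicitly.

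The more substantial gap is part (2). The entire content of the biharmonicity claim is the verification $\tau^{2}(f)=0$, and your proposal stops at "should yield $\tau^{2}(f)=0$," explicitly flagging the bookkeeping as the unresolved obstacle. The paper closes this with a device you are missing: it introduces the first-order polynomials $R_{\alpha}(z)=\sum_{j,\beta}p_{j\alpha}a_{\beta}z_{j\beta}$ and $S_{\alpha}(z)=\sum_{k}q_{k}z_{k\alpha}$, so that Lemma \ref{lemma-U(n)} gives $\kappa(P,Q)=-\sum_{\alpha}R_{\alpha}S_{\alpha}$ and the numerator of $\tau(f)$ becomes a sum of products of eigenfunctions of $\tau$, all with eigenvalue $-n$. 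Then $\tau^{2}(f)$ is expanded with (\ref{equation-basic}) and (\ref{kappa-basic}), using $\tau(Q^{-2})=2(n-3)Q^{-2}$ and $\kappa(\sum_{\alpha}R_{\alpha}S_{\alpha}-PQ,\,Q)=-Q\sum_{\alpha}R_{\alpha}S_{\alpha}-PQ^{2}$, and the eigenvalue contributions cancel identically. Your $A=PQ+\kappa(P,Q)$ is the same object, but without exhibiting it as such a sum of products of eigenfunctions the computation of $\tau(A)$ and $\kappa(A,Q^{2})$ is not organised, and the assertion that everything cancels remains an assertion rather than a proof. Your final step, descending to $\SU n$ via the harmonic morphism $\pi$ with dilation $\lambda\equiv 1$, matches the paper.
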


\begin{proof}
It is an immediate consequence of the equations (\ref{tau-quotient-special}) and $$\kappa(Q,Q)=-Q^2$$ that the tension field $\tau(f)$ satisfies 
\begin{eqnarray}\label{tau-f}
\tau(f)=-2(PQ+2\kappa(P,Q))Q^{-2}.
\end{eqnarray}
This means that the function $f$ is harmonic if and only if $PQ+\kappa(P,Q)=0$, or equivalently, 
\begin{equation}\label{tau-f-coord}
\sum_{j,k,\alpha,\beta}(p_{j\alpha}q_k-p_{k\alpha}q_j)
a_{\beta}z_{j\alpha}z_{k\beta}=0.
\end{equation}

Let us first investigate the special case when the vector $a$ and the matrix $M_P$ are of the following special form
$$a=[0,\dots,0,a_{\beta_0},0,\dots,0]$$ and 
\begin{equation*}
M_P=
\begin{bmatrix}
     0 & \cdots & 0      & p_{1\beta_0}  & 0      & \cdots & 0 \\
\vdots & \vdots & \vdots & \vdots & \vdots & \vdots & 0 \\
     0 & \cdots & 0      & p_{n\beta_0}  & 0      & \cdots & 0 \\
\end{bmatrix}.
\end{equation*}
Then the equation (\ref{tau-f-coord}) reduces to the following which is trivially satisfied
\begin{equation*}
\sum_{j,k}(p_{j\beta_0}\,q_k-p_{k\beta_0}\,q_j)
a_{\beta_0}z_{j\beta_0}z_{k\beta_0}=0.
\end{equation*}
If we are not is the special situation, just discussed, then 
$$(p_{j\alpha}q_k-p_{k\alpha}q_j)a_{\beta}=0$$
for all $j,k,\alpha,\beta$.  Since $a\neq 0$ there exists an $a_\beta\neq 0$ and hence
$$(p_{j\alpha}q_k-p_{k\alpha}q_j)=0$$
for all $j,k,\alpha$.
This shows that the non-zero vector $q$ and any column of the matrix $M_P$ are linearly dependent.  We have now proven the statement (1).

\smallskip

At this point it is convenient to introduce the following polynomial functions $R_\alpha,S_\alpha:\U n\to\cn$ satisfying
\begin{equation*}
	R_{\alpha}(z)=\sum_{j,\beta=1}^n p_{j\alpha}a_{\beta}z_{j\beta}
	\ \ \text{and}\ \ 
	S_{\alpha}(z)=\sum_{k=1}^nq_kz_{k\alpha}.
\end{equation*}	
Then it is a direct consequence of Lemma \ref{lemma-U(n)} that
\begin{equation*}
\kappa(P,Q)=-\sum_{\alpha=1}^nR_{\alpha}S_{\alpha}.
\end{equation*}
By substituting this into equation (\ref{tau-f}) we obtain
\begin{equation*}
\tau(f)=2(\sum_{\alpha=1}^nR_{\alpha}S_{\alpha}-PQ)Q^{-2}.
\end{equation*}
Exploiting equations (\ref{equation-basic}), (\ref{kappa-basic}) and the fact that $\tau(Q^{-2})=2(n-3)Q^{-2}$ we now yield 
\begin{multline*}
\tau^2(f)=2\sum_{\alpha=1}^n(\tau(R_{\alpha})S_{\alpha}+2\kappa(R_{\alpha},S_{\alpha})+R_{\alpha}\tau(S_{\alpha}))Q^{-2}\\-2(\tau(P)Q+2\kappa(P,Q)+P\tau(Q))Q^{-2}\\-8\kappa(\sum_{\alpha=1}^nR_{\alpha}S_{\alpha}-PQ,Q)Q^{-3}+4(n-3)(\sum_{\alpha=1}^nR_{\alpha}S_{\alpha}-PQ)Q^{-2}.
\end{multline*}
Again employing Lemma \ref{lemma-U(n)} we easily see that 
\begin{equation*}
\kappa(\sum_{\alpha=1}^nR_{\alpha}S_{\alpha}-PQ,Q)=-Q\sum_{\alpha=1}^nR_{\alpha}S_{\alpha}-PQ^2.
\end{equation*}
Using this and the fact that $P,Q,R_{\alpha}$ and $S_{\alpha}$ are eigenfunctions of $\tau$ with eigenvalue $\lambda=-n$ we then have
\begin{multline*}
\tau^2(f)=(-4n\sum_{\alpha=1}^nR_{\alpha}S_{\alpha}-4PQ+4nPQ+4\sum_{\alpha=1}^nR_{\alpha}S_{\alpha})Q^{-2}\\
+4(n-3)(\sum_{\alpha=1}^nR_{\alpha}S_{\alpha}-PQ)Q^{-2}+8(\sum_{\alpha=1}^nR_{\alpha}S_{\alpha}-PQ)Q^{-2}=0.
\end{multline*}
This establishes the statement claimed in (2).
\end{proof}


\section{Biharmonic Functions on $\SU n$ - (I)}\label{section-new-U(n)-I}

In this section we manufacture an infinite sequence of new proper biharmonic functions defined on open and dense subsets of the special unitary group $\SU n$.

\medskip

Our strategy is the following: Let $a,b,p,q\in\cn^n$ be non-zero and the polynomial functions $P,Q:\U n\to\cn$ be given by
\begin{equation*}
P(z)=\sum_{j}p_{j}a_\alpha z_{j\alpha}\ \ \text{and}\ \  Q(z)=\sum_{k}q_{k}b_\beta z_{k\beta},
\end{equation*}
such that $PQ+\kappa (P,Q)\neq 0$.  Then it is clear from Theorem \ref{theorem-SU(n)} that for $c_0,c_1\in\cn$ with $c_0\neq 0$ 
the function 
$$
\Phi_1(f,\tau(f))=c_0f+c_1\tau(f)
$$
induces a proper biharmonic function locally defined on the special unitary group $\SU n$.   The function $\Phi_1$ is a homogeneous first order polynomial in $f$ and its tension field $\tau(f)$.  Our aim is now to generalise this to any positive degree.

\smallskip

Let $d$ be a positive integer and $\Phi_d:\cn^2\to\cn$ be a complex homogeneous polynomial
of the form
$$\Phi_d(z_1,z_2)=\sum_{k=0}^dc_k\,z_1^{d-k}z_2^k$$
We are now interested in determining all such polynomials with the property that the
function $\Phi_d(f,\tau(f))$ is proper biharmonic i.e.
$$\tau(\Phi_d(f,\tau(f)))\neq 0\ \ \text{and}\ \ \tau^2(\Phi_d(f,\tau(f)))=0.$$
Before we can do this we need some practical preparations.

\begin{lemma}\label{basic-un}
Let $a,b,p,q\in\cn^n$ be non-zero and the polynomial functions $P,Q:\U n\to\cn$ be given by
\begin{equation*}
P(z) =\sum_{j\alpha}p_{j}a_\alpha z_{j\alpha}
\ \ \text{and}\ \  
Q (z)=\sum_{k\beta}q_{k}b_\beta z_{k\beta}.
\end{equation*}
Then their rational quotient $f=P/Q$ satisfies
\begin{enumerate}
\item[(1)]$\kappa(f,f)=f\,\tau(f)$,
\item[(2)] $\kappa(f,\tau(f))=\tau(f)^2$,
\item[(3)] $\kappa(\tau(f),\tau(f))=-2\tau(f)^2$.
\end{enumerate}
\end{lemma}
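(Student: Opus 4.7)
The plan is to reduce everything to $\kappa$-computations among $P$, $Q$, and the two ``swap'' polynomials
$$\tilde P(z):=\sum_{j,\alpha}p_j\,b_\alpha\,z_{j\alpha},\qquad \tilde Q(z):=\sum_{k,\beta}q_k\,a_\beta\,z_{k\beta},$$
obtained from $P$ and $Q$ by interchanging the r\^oles of $a$ and $b$. Each of the coefficient matrices of $P,Q,\tilde P,\tilde Q$ has pairwise linearly dependent columns, so Lemma \ref{lemma-columns-dependent} immediately yields $\kappa(P,P)=-P^2$, $\kappa(Q,Q)=-Q^2$, $\kappa(\tilde P,\tilde P)=-\tilde P^2$ and $\kappa(\tilde Q,\tilde Q)=-\tilde Q^2$. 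For the mixed terms I apply Lemma \ref{lemma-U(n)} to $\kappa(X,Y)=-\sum c_{j\alpha}c'_{k\beta}z_{k\alpha}z_{j\beta}$ and refactor the double sum as a product of two single sums; this produces the closed system
$$\kappa(P,Q)=-\tilde P\tilde Q,\quad \kappa(\tilde P,\tilde Q)=-PQ,\quad \kappa(P,\tilde P)=-P\tilde P,\quad \kappa(Q,\tilde Q)=-Q\tilde Q,$$
$$\kappa(P,\tilde Q)=-P\tilde Q,\qquad \kappa(\tilde P,Q)=-\tilde P\,Q.$$

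Set $A:=PQ-\tilde P\tilde Q$. Plugging $\kappa(Q,Q)=-Q^2$ and $\kappa(P,Q)=-\tilde P\tilde Q$ into (\ref{tau-quotient-special}) gives the concise formula $\tau(f)=-2A/Q^2$. For identity (1) I substitute the three relations $\kappa(P,P)=-P^2$, $\kappa(Q,Q)=-Q^2$ and $\kappa(P,Q)=-\tilde P\tilde Q$ into (\ref{kappa-quotient}); the right-hand side collapses to $-2PQ\cdot A$, so $\kappa(f,f)=-2PA/Q^3=f\,\tau(f)$.

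For identities (2) and (3) I expand using the derivation properties $\kappa(u,vw)=v\,\kappa(u,w)+w\,\kappa(u,v)$ and $\kappa(u,v^{-1})=-\kappa(u,v)/v^2$. Three auxiliary identities are needed. First, $\kappa(f,Q)=A/Q$, which is a single quotient expansion. Second, $\kappa(f,A)=0$; this splits as $\kappa(f,PQ)=0$ and $\kappa(f,\tilde P\tilde Q)=0$, each an immediate consequence of the mixed relations listed above (for example $\kappa(f,\tilde P)$ and $\kappa(f,\tilde Q)$ are both zero after substitution). Third, on the numerator $A$ one has $\kappa(A,Q)=-Q\,A$, which drops out of the product rule applied termwise to $A=PQ-\tilde P\tilde Q$, and the key identity $\kappa(A,A)=-2A^2$. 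Granting these three facts, a direct expansion of $\kappa(\,\cdot\,,A/Q^2)$ produces $\kappa(f,\tau(f))=4A^2/Q^4=\tau(f)^2$ and $\kappa(\tau(f),\tau(f))=-8A^2/Q^4=-2\tau(f)^2$.

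The main obstacle is the computation of $\kappa(A,A)$. Expanding $\kappa(PQ-\tilde P\tilde Q,PQ-\tilde P\tilde Q)$ via (\ref{kappa-basic}) and substituting the mixed relations gives
$$\kappa(A,A)=-2PQ(PQ+\tilde P\tilde Q)+8PQ\tilde P\tilde Q-2\tilde P\tilde Q(PQ+\tilde P\tilde Q),$$
which collapses to $-2A^2$ via the algebraic identity $(PQ+\tilde P\tilde Q)^2-4PQ\tilde P\tilde Q=(PQ-\tilde P\tilde Q)^2$. Every other step of the argument is bilinearity of $\kappa$, the product rule, and direct substitution.
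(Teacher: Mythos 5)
Your proof is correct and follows essentially the same route as the paper: your $\tilde P,\tilde Q$ are exactly the auxiliary functions $R,S$ introduced there, and your ten $\kappa$-relations coincide with the paper's list. The paper then dismisses the rest as ``a straightforward calculation''; your organisation of that calculation around $A=PQ-\tilde P\tilde Q=-\tfrac12 Q^2\tau(f)$ and the identities $\kappa(f,A)=0$, $\kappa(A,Q)=-QA$, $\kappa(A,A)=-2A^2$ is a correct and cleanly structured way to carry it out.
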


\begin{proof}
Here it is convenient to introduce the complex-valued polynomial functions $R,S:\U n\to\cn$ with
\begin{equation*}
R(z)=\sum_{j,\beta}p_{j}b_\beta z_{j\beta}\ \ \text{and}\ \  S(z)=\sum_{k,\alpha}q_{k}a_\alpha z_{k\alpha}.
\end{equation*}
Then an elementary computation, applying Lemma \ref{lemma-U(n)}, yields
$$
\kappa(P,P)=-P^2,\ \ 
\kappa(Q,Q)=-Q^2,
$$
$$
\kappa(R,R)=-R^2,\ \ 
\kappa(S,S)=-S^2,
$$
$$
\kappa(P,Q)=-RS,\ \ 
\kappa(R,S)=-PQ,  
$$
$$
\kappa(P,R)=-PR,\ \ 
\kappa(P,S)=-PS,
$$
$$
\kappa(Q,R)=-QR,\ \ 
\kappa(Q,S)=- QS.
$$
With this at hand a straightforward calculation establishes the claim.
\end{proof}

As a consequence of Lemma \ref{basic-un} we now have the following useful result.

\begin{lemma}\label{lemma-kappa-tau-U(n)}
Let $a,b,p,q\in\cn^n$ be non-zero and the polynomial functions $P,Q:\U n\to\cn$ be given by
\begin{equation*}
P(z) =\sum_{j,\alpha}p_{j}a_\alpha z_{j\alpha}\ \ \text{and}\ \  Q (z)=\sum_{k,\beta}q_{k}b_\beta z_{k\beta}.
\end{equation*}
If $\ell,m$ are positive integers then the rational function $f=P/Q$ satisfies
\begin{enumerate}
\item[(1)] 
$\kappa (f^\ell,\tau (f)^m)=\ell\, m f^{\ell-1}\tau(f)^{m+1}$,
\item[(2)] 
$\kappa (f^\ell,f^m)=\ell\, m\, f^{m+\ell-1}\tau(f)$,
\item[(3)] 
$\tau (f^\ell) = \ell^2 f^{\ell-1}\tau(f)$,
\item[(4)] 
$\kappa(\tau(f)^\ell,\tau(f)^m)=-2\,\ell\,m\,\tau (f)^{\ell+m}$,
\item[(5)] 
$\tau(\tau (f)^ \ell)=-2\,\ell(\ell-1)\,\tau (f)^\ell$,
\end{enumerate}
\end{lemma}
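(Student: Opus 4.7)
The plan is to derive all five identities from the three base identities of Lemma \ref{basic-un}, using two general tools: the Leibniz rule for $\kappa$ (which follows from the bilinearity of $\kappa$ together with $\nabla(uv)=u\nabla v+v\nabla u$) and the product rule (\ref{equation-basic}) for $\tau$. As a preliminary step I would establish, by a short induction on $\ell$, the iterated Leibniz formula
\[
\kappa(u^\ell,v)=\ell\,u^{\ell-1}\kappa(u,v),
\]
valid for arbitrary smooth $u,v$, so that applying it once in each slot reduces $\kappa(u^\ell,v^m)$ to a constant multiple of $u^{\ell-1}v^{m-1}\kappa(u,v)$.

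With this in hand, identities (1), (2) and (4) are immediate: each collapses, after two applications of the Leibniz rule, to a scalar multiple of $\kappa(f,\tau(f))$, $\kappa(f,f)$, or $\kappa(\tau(f),\tau(f))$ respectively, whose values are supplied by Lemma \ref{basic-un}.

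For (3) I would proceed by induction on $\ell$, with the case $\ell=1$ trivial. The inductive step expands $\tau(f^{\ell+1})=\tau(f\cdot f^\ell)$ via (\ref{equation-basic}), substitutes the inductive hypothesis $\tau(f^\ell)=\ell^2 f^{\ell-1}\tau(f)$ and the already-proven identity (2) to rewrite $\kappa(f,f^\ell)$, and then collects terms as $(1+2\ell+\ell^2)f^\ell\tau(f)=(\ell+1)^2 f^\ell\tau(f)$.

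Identity (5) is the subtlest because it implicitly relies on $\tau^2(f)=0$. I would first remind the reader that this vanishing is not a separate hypothesis but was actually shown, for every rational $f=P/Q$ of the present form, in the concluding computation of the proof of Theorem \ref{theorem-SU(n)}. The induction on $\ell$ then mirrors that of (3): apply (\ref{equation-basic}) to $\tau(f)\cdot\tau(f)^\ell$, use (4) for the resulting $\kappa$-term, and combine with the inductive hypothesis to reach $-2\ell(\ell+1)\tau(f)^{\ell+1}$. The only thing demanding care is the coefficient bookkeeping; I do not foresee any genuine obstacle.
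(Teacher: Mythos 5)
Your proposal is correct and follows essentially the same route as the paper: the Leibniz rule for $\kappa$ reduces (1), (2) and (4) to the three base identities of Lemma \ref{basic-un}, while (3) and (5) are handled by the same inductions via (\ref{equation-basic}). Your explicit remark that the base case of (5) is precisely the vanishing $\tau^2(f)=0$, already established in the concluding computation of the proof of Theorem \ref{theorem-SU(n)}, addresses a point the paper's proof passes over silently (the term $\tau(\tau(f))\,\tau(f)^\ell$ is simply dropped there), so flagging it is a welcome clarification.
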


\begin{proof}
Let $\B$ be the standard orthonormal frame for the tangent bundle of $\U n$.  Then statement (1) is an immediate  consequence of the following computation
\begin{eqnarray*}
\kappa(f^\ell,\tau(f)^m)=&=&\sum_{X\in\B}X(f^\ell)X(\tau(f)^m)\\
&=&\sum_{X\in\B}\ell f^{\ell-1}X(f)\, m\,\tau(f)^{m-1}X(\tau(f))\\
&=&\ell\,mf^{\ell-1}\tau(f)^{m-1}\kappa(f,\tau(f))\\
&=&\ell\,mf^{\ell-1}\tau(f)^{m+1}.
\end{eqnarray*}
The proof of (2) follows similarly and is therefore skipped. It is clear that (3) is true for $\ell=1$ and the statement is a direct consequence of the following induction step
\begin{eqnarray*}
\tau(f^{\ell+1})&=&\tau(f)f^\ell+2\kappa(f,f^\ell)+f\tau(f^\ell)\\
&=&f^\ell\tau(f)+2\ell f^{\ell-1}f\tau(f)+f\ell^2f^{\ell-1}\tau(f)\\
&=&(\ell+1)^2f^\ell\tau(f).
\end{eqnarray*}
The equation (4) follows immediately from
\begin{eqnarray*}
\kappa(\tau(f)^\ell,\tau(f)^m)&=&\ell\,\tau(f)^{\ell-1}\ m\,\tau(f)^{m-1}\kappa(\tau(f),\tau(f))\\
&=&-2\ell\,m\,\tau(f)^{\ell+m-2}\tau(f)^2\\
&=&-2\ell\, m\,\tau(f)^{\ell+m}.
\end{eqnarray*}
The statement (5) is clearly true when $\ell=1$ and the rest is a consequence of the next induction step
\begin{eqnarray*}
\tau(\tau(f)^{\ell+1})&=&\tau(\tau(f))\tau(f)^\ell+2\kappa(\tau(f),\tau(f)^\ell))+\tau(f)\tau(\tau((f)^\ell))\\
&=&2\ell\tau(f)^{\ell-1}\kappa(\tau(f),\tau(f))-2\ell(\ell-1)\tau(f)^{\ell+1}\\
&=&-2\ell(2\tau(f)^{\ell+1}+(\ell-1)\tau(f)^{\ell+1})\\
&=&-2(\ell+1)\ell\tau(f)^{\ell+1}.
\end{eqnarray*}
\end{proof}

After our preparations we are now ready to construct the new proper biharmonic functions promised at the beginning of this section. Before attacking the general case we first consider explicit examples.  For the remainder of this section let the function $f$ be given as in Lemma \ref{lemma-kappa-tau-U(n)}.

\begin{example}
We first investigate the case of second order homogeneous polynomials
$$\Phi_2(f)=c_0\,f^2 + c_1\, f\,\tau(f) + c_2\, \tau(f)^2.$$
Then a simple application of Lemma \ref{basic-un} shows that
$$\tau(\Phi_2(f))=4\,c_0f\,\tau(f)+(3\,c_1 -4\,c_2)\,\tau(f)^2,$$
so $\Phi_2$ is a harmonic function if and only if 
$
c_0=0\ \ \text{and}\ \ 4c_2=3c_1.
$ 
Hence the function $\Phi_2$ is proper harmonic if and only if it is a non-zero multiple of  
$$
H_2(f)=4f\tau(f)+3\tau(f)^2.
$$  
If we now apply the Laplace-Betrami operator again we obtain
$$\tau^2(\Phi_2(f))=4\,(3\,c_0 - 3\,c_1 + 4\,c_2)\,\tau(f)^2.$$
This tells us that the function $\Phi_2$ is proper biharmonic if and only if
$$4\,c_2 = 3\,c_1-3\,c_0\ \ \text{and}\ \ c_0\neq 0.$$
This statement is clearly equivalent to: $\Phi_2$ is proper biharmonic if and only if $c_0\neq 0$ and $\Phi_2=c_0B_2+c_1H_2$, where 
$$
B_2(f)=4\,f^2-3\,\tau(f)^2\ \ \text{and}\ \ H_2(f)=4\,f\tau(f)+3\,\tau(f)^2.
$$

The same method can now be applied to show that for $d=3,4$ every proper biharmonic function $\Phi_d(f)$ of the form 
$$
\Phi_d(f)=\sum_{k=0}^dc_k\,f^{d-k}\tau(f)^k
$$
is given by $\Phi_d=c_0\,B_d+c_1\,H_d$, where 
$$
B_3(f)=(6\,f^3-27\,f\tau(f)^2-15\,\tau(f)^3),
$$
$$
H_3(f)=(6\,f^2\tau(f) + 12\,f\tau(f)^2+5\,\tau(f)^3)
$$
and
$$
B_4(f)=(32\,f^4-480\,f^2\tau(f)^2-640\,f\tau(f)^3-210\,\tau(f)^4),
$$
$$
H_4(f)=(32\,f^3\tau(f)+120\,f^2\tau(f)^2+120\,f\tau(f)^3+35\,\tau(f)^4).
$$
\end{example}

\vskip .4cm

After studying the cases when $d=2,3,4$ we now consider the general situation.  As a first intermediate step we investigate the harmonic functions. 

\begin{proposition}\label{prop-harmonic}
Let $a,b,p,q\in\cn^n$ be non-zero, the polynomial functions $P,Q:\U n\to\cn$ be given by
\begin{equation*}
	P(z) =\sum_{j\alpha}p_{j}a_\alpha z_{j\alpha}
	\ \ \text{and}\ \  
	Q (z)=\sum_{k\beta}q_{k}b_\beta z_{k\beta}
\end{equation*}	
and $f=P/Q$ be their rational quotient. Then the function 
\begin{equation*}
\Phi_d(f)=\sum_{k=0}^dc_kf^{d-k}\tau(f)^k
\end{equation*}
is proper harmonic if and only if $c_0=0$, $c_1\neq 0$ and for $k=1,\dots, d-1$
$$2\,k\,(k+1)\,c_{k+1}=(d^2-k^2)c_k.$$
\end{proposition}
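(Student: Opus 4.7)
The plan is to expand $\tau(\Phi_d(f))$ as a linear combination of the monomials $\{f^{d-j}\tau(f)^j\}_{j=0}^d$ via the product rule (\ref{equation-basic}) together with Lemma \ref{lemma-kappa-tau-U(n)}, and then extract a linear system on the coefficients $c_0,\dots,c_d$.

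First, for each $k=0,\dots,d$ I apply (\ref{equation-basic}) with $u=f^{d-k}$ and $v=\tau(f)^k$, using parts (3), (1) and (5) of Lemma \ref{lemma-kappa-tau-U(n)} for $\tau(u)$, $\kappa(u,v)$ and $\tau(v)$ respectively. The terms $\tau(u)\,v$ and $2\kappa(u,v)$ combine through $(d-k)^2+2k(d-k)=(d-k)(d+k)=d^2-k^2$, yielding
$$\tau\bigl(f^{d-k}\tau(f)^k\bigr)=(d^2-k^2)\,f^{d-k-1}\tau(f)^{k+1}-2k(k-1)\,f^{d-k}\tau(f)^k.$$
Multiplying by $c_k$, summing over $k$, and collecting the coefficient of $f^{d-j}\tau(f)^j$, which for $1\le j\le d$ picks up $(d^2-(j-1)^2)c_{j-1}$ from the first summand at $k=j-1$ and $-2j(j-1)c_j$ from the second at $k=j$, produces
$$\tau(\Phi_d(f))=\sum_{j=1}^{d}\bigl[(d^2-(j-1)^2)c_{j-1}-2j(j-1)c_j\bigr]\,f^{d-j}\tau(f)^j.$$
Note that the would-be monomials $f^d$ (requiring $k=-1$ in the first summand) and $f^{-1}\tau(f)^{d+1}$ (requiring $k=d$ in the first summand) drop out automatically because of $-2\cdot 0\cdot(-1)=0$ and $d^2-d^2=0$ respectively.

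Granting the linear independence of the monomials $\{f^{d-j}\tau(f)^j\}_{j=0}^{d}$ as rational functions on $\U n$, the harmonicity of $\Phi_d(f)$ is equivalent to the vanishing of every bracket above. The equation at $j=1$ reads $d^2c_0=0$, forcing $c_0=0$, and the substitution $k=j-1$ for $j=2,\dots,d$ converts the remaining equations into the stated recurrence $2k(k+1)c_{k+1}=(d^2-k^2)c_k$ for $k=1,\dots,d-1$. Since this recurrence determines $c_2,\dots,c_d$ uniquely from $c_1$, the solution space of the harmonicity condition is one-dimensional, and properness (non-constancy of $\Phi_d(f)$) amounts to $c_1\ne 0$.

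The only non-bookkeeping step is the linear-independence claim, since $f$ and $\tau(f)$ are rational in the matrix coefficients $z_{j\alpha}$ and thus not a priori independent in the way they would be in a free polynomial algebra. I expect this to be the main technical hurdle. One justifies it by writing $f$ and $\tau(f)$ through equation (\ref{tau-f}) and Lemma \ref{basic-un}, and observing that the rational function $\tau(f)/f$ is non-constant on $\U n$ for generic $a,b,p,q$; this forces the powers of $\tau(f)/f$ to be independent over $\cn$ and hence, after multiplying by $f^d$, yields the desired independence. Modulo this point, the remainder of the argument is the coefficient comparison above.
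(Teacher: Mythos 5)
Your computation of $\tau(\Phi_d(f))$ and the resulting recurrence coincide exactly with the paper's own proof, which derives the same identity (its equation (\ref{equa-tau-Phi})) from Lemma \ref{lemma-kappa-tau-U(n)} and then reads off $c_0=0$ together with the difference equation $2\,k\,(k+1)\,c_{k+1}=(d^2-k^2)c_k$ by the same coefficient comparison. The linear-independence of the monomials $f^{d-j}\tau(f)^j$ that you rightly flag is simply taken for granted in the paper (``clearly equivalent''), so your sketched justification via the non-constancy of $\tau(f)/f$ is a welcome supplement rather than a divergence in method.
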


\begin{proof}
An elementary computation, applying Lemma \ref{lemma-kappa-tau-U(n)}, yields  
\begin{equation}\label{equa-tau-Phi}
\tau(\Phi_d(f))=\sum_{k=0}^dc_k
\Bigl[(d^2-k^2)\tau(f)
-2k(k-1)f\Bigr]f^{d-k-1}\tau(f)^{k}.
\end{equation}
The condition $\tau(\Phi_d(f))=0$ is clearly equivalent to $c_0=0$ and the first order linear difference equation 
$$2\,k\,(k+1)\,c_{k+1}=(d^2-k^2)c_k,$$
for $k=1,\dots, d-1.$  The statement is a direct consequence of these relations.
\end{proof}

Let us now assume that $d$ is a positive integer and that  $\Phi_d(f)$ is a proper biharmonic function of the form
\begin{equation*}
\Phi_d(f)=\sum_{k=0}^dc_kf^{d-k}\tau(f)^k.
\end{equation*}
It then follows from the identity (\ref{equa-tau-Phi}) and Lemma
\ref{lemma-kappa-tau-U(n)} that 
\begin{eqnarray*}
\tau^2(\Phi_d(f))&=&\sum_{k=0}^dc_k
\Bigl[(d^2-k^2)(d^2-(k+1)^2)f^{d-k-2}\tau(f)^{k+2}\\
& &\qquad\qquad -4k^2(d^2-k^2)f^{d-k-1}\tau(f)^{k+1}\\
& &\qquad\qquad\qquad\qquad +4k^2(k-1)^2f^{d-k}\tau(f)^k\Bigr].
\end{eqnarray*}
By comparing the coefficients of $\tau^2(\Phi_d(f))=0$ we obtain the following second order linear difference equation
\begin{eqnarray*}
4(k-1)^2k^2c_k&=&4(k-1)^2(d^2-(k-1)^2)\,c_{k-1}\\
& &\qquad -\,(d^2-(k-2)^2)(d^2-(k-1)^2)\,c_{k-2},
\end{eqnarray*}
for $2\leq k\leq d$. Together with Proposition \ref{prop-harmonic} this shows that $c_0\neq 0$ and that  $c_2,\dots,c_d$ are determined by $c=(c_0,c_1)\in\cn^2$.

\medskip

Let $B_d(f)$ and $H_d(f)$ be the functions obtained this way with $c=(1,0)$ and $(1,0)$, respectively. Then $B_d(f)$ is proper biharmonic and $H_d(f)$ is proper harmonic.  We have shown that every proper biharmonic function of the form 
\begin{equation*}
\Phi_d(f)=\sum_{k=0}^dc_kf^{d-k}\tau(f)^k
\end{equation*}
can be written as a linear combination 
$$\Phi_d(f)=c_0\,B_d(f)+c_1\,H_d(f),$$ where $c_0,c_1\in\cn$ such that $c_0\neq 0$.  
Thus we have established the following result. This is a special case of Theorem\,\ref{main-result}. 

\begin{theorem}\label{main-d1}
For each $d\in\mathbb{N}^+$ there exist a proper biharmonic function of the form
$$\Phi_d(f,\tau(f))=\sum_{k=0}^dc_k\,f^{d-k}\tau(f)^k$$
 defined on an open and dense subset of $\SU n$.
\end{theorem}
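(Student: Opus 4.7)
The plan is to build $\Phi_d$ directly from the recursion forced by the biharmonicity condition $\tau^2(\Phi_d(f))=0$. The key input is Lemma \ref{lemma-kappa-tau-U(n)}, which expresses $\tau$ and $\kappa$ of powers of $f$ and $\tau(f)$ again as polynomials in $f$ and $\tau(f)$; this closure property is what makes a finite-dimensional linear-algebra reformulation possible.

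First I would apply the Leibniz-type rule (\ref{equation-basic}) together with parts (1), (3), (5) of Lemma \ref{lemma-kappa-tau-U(n)} to obtain the monomial-by-monomial identity
$$\tau\bigl(f^{d-k}\tau(f)^k\bigr)=(d^2-k^2)\,f^{d-k-1}\tau(f)^{k+1}-2k(k-1)\,f^{d-k}\tau(f)^k.$$
Summing over $k$ recovers (\ref{equa-tau-Phi}). Applying $\tau$ a second time and carefully re-indexing so that the two sources of an $f^{d-k}\tau(f)^k$ contribution (namely the shift $j=k-1$ inside the first term of the monomial formula, and $j=k$ inside the second) are combined, the coefficient of $f^{d-k}\tau(f)^k$ in $\tau^2(\Phi_d(f))$ becomes a linear combination of $c_{k-2}$, $c_{k-1}$, $c_k$. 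Setting it to zero produces exactly the second-order linear difference equation
$$4k^2(k-1)^2\,c_k=4(k-1)^2\bigl(d^2-(k-1)^2\bigr)c_{k-1}-\bigl(d^2-(k-2)^2\bigr)\bigl(d^2-(k-1)^2\bigr)c_{k-2}$$
for $2\le k\le d$, which is already displayed above in the text.

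Next, observing that the leading factor $4k^2(k-1)^2$ is non-zero for $k\ge 2$, the recursion uniquely determines $c_2,\dots,c_d$ from any prescribed pair $(c_0,c_1)\in\cn^2$. I would then single out the branch $(c_0,c_1)=(1,0)$, denote the resulting polynomial by $B_d$, and note that $\tau^2(B_d(f))=0$ by construction. To upgrade this to proper biharmonicity it suffices to check that $\tau(B_d(f))\neq 0$; but Proposition \ref{prop-harmonic} states that harmonicity of such a polynomial forces $c_0=0$, which contradicts our choice $c_0=1$. Hence $B_d(f)$ is proper biharmonic on the open dense subset $\{Q\neq 0\}$ of $\U n$, and descends to $\SU n$ since both $P$ and $Q$ are homogeneous of the same degree in the matrix coefficients.

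The main technical obstacle is the index bookkeeping when extracting the recurrence from $\tau^2(\Phi_d(f))$: the contributions from three different monomials must be aligned against a single $f^{d-k}\tau(f)^k$ before any cancellation is visible. Once this is done cleanly, nothing further is needed beyond the observation that $4k^2(k-1)^2\neq 0$ for $k\ge 2$ so the recursion is non-degenerate, and the appeal to Proposition \ref{prop-harmonic} to rule out harmonicity of the $(1,0)$-branch.
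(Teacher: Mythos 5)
Your proposal is correct and follows essentially the same route as the paper: compute $\tau$ of the monomials $f^{d-k}\tau(f)^k$ via Lemma \ref{lemma-kappa-tau-U(n)} to get (\ref{equa-tau-Phi}), apply $\tau$ again and compare coefficients to obtain the second-order difference equation, observe that $4k^2(k-1)^2\neq 0$ for $k\ge 2$ so that $(c_0,c_1)$ determines all remaining coefficients, and invoke Proposition \ref{prop-harmonic} to see that $c_0\neq 0$ rules out harmonicity. The only cosmetic difference is that you make the properness step and the descent to $\SU n$ explicit, which the paper leaves implicit.
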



\section{Biharmonic Functions on  $\SU n$ - (II)}
\label{section-new-U(n)-II}

In this section we continue our constructions of local biharmonic functions on the special unitary group $\SU n$.  
Namely, we will construct biharmonic multi-homogeneous polynomials
of the form
\begin{equation*}
\Phi_{d_1,\dots,d_m}(f_1,\dots,f_m)=\sum_{k_1=0}^{d_1}\dots\sum_{k_m=0}^{d_m}c_{k_1,\dots,k_m}f_1^{d_1-k_1}\tau(f_1)^{k_1}\dots f_m^{d_m-k_m}\tau(f_m)^{k_m},
\end{equation*}
where the functions $f_i$ are carefully chosen rational functions.
In the present section we only deal with two examples.
Namely, we will construct two biharmonic two-homogeneous polynomials.
These considerations will help the reader to understand the calculations of Section\,\ref{the-general-case}
in which we deal with the general case.

\medskip

Let $p,q\in\cn^n$ be linearly independent and define the complex-valued polynomial functions $P_\alpha,Q_\beta:\U n\to\cn$ by 
$$
P_\alpha=\sum_jp_j a_{\alpha}z_{j\alpha}\ \ \text{and}\ \  Q_\beta=\sum_kq_k b_{\beta}z_{k\beta}.
$$
For a fixed $\beta$, let $W_\beta$ be the open and dense subset $\{z\in\U n|\ Q_\beta (z)\neq 0\}$ of $\U n$ and define the functions $f_1,\dots,f_{n}:W_\beta\to\cn$ by 
$$f_1=P_1/Q_\beta,\dots,f_{n}=P_{n}/Q_\beta.$$  
Note that according to Theorem \ref {theorem-SU(n)} the function $f_{i}:\U n\to\cn$ is harmonic if $i=\beta$ and proper biharmonic otherwise. 

\smallskip

The following lemma generalises Lemma\,\ref{basic-un}.

\begin{lemma}\label{kappa-alpha-beta}
In the above situation, the tension field $\tau$ and the conformality operator $\kappa$ satisfy the following identities for any $i,j\in\{1,\dots,n\}$
\begin{enumerate}
\item[(1)] 
$2\kappa(f_{i},f_{j})
=f_{j}\tau(f_{i})+\tau(f_{j})f_{i}$,
\item[(2)] 
$\kappa(f_{i},\tau(f_{j}))
=\tau(f_{i})\tau(f_{j})$,
\item[(3)] 
$\kappa(\tau(f_{i}),\tau(f_{j}))
=-2\tau(f_{i})\tau(f_{j})$,
\item[(4)] 
$\tau(f_i^m)=m^2f_i^{m-1}\tau(f_i)$,
\item[(5)] 
$\tau(\tau(f_i)^m)=-2\,m\,(m-1)\,\tau(f_i)^m$.
\end{enumerate}
\end{lemma}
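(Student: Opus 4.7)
The plan is to mimic the proof of Lemma \ref{basic-un}, introducing auxiliary polynomial functions that absorb the ``column swapping'' performed by $\kappa$ on the matrix coefficients of $\U n$, computing a short table of atomic $\kappa$-identities via Lemma \ref{lemma-U(n)}, and then deriving everything from that table.

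Step 1. I first introduce the auxiliary polynomial functions
$$A_\alpha(z) = \sum_{k} q_k\, a_\alpha\, z_{k\alpha}, \qquad B_\beta(z) = \sum_{j} p_j\, b_\beta\, z_{j\beta},$$
so that $A_\alpha$ is the column-concentrated analogue of $P_\alpha$ with $p$ replaced by $q$, and $B_\beta$ is the column-concentrated analogue of $Q_\beta$ with $q$ replaced by $p$. Applying Lemma \ref{lemma-U(n)} pointwise produces a short table of atomic identities, e.g.\ $\kappa(P_i,P_j)=-P_iP_j$, $\kappa(Q_\alpha,Q_\beta)=-Q_\alpha Q_\beta$, $\kappa(P_i,Q_\beta)=-A_iB_\beta$, $\kappa(A_i,B_\beta)=-P_iQ_\beta$, $\kappa(P_i,A_j)=-A_iP_j$ and $\kappa(Q_\alpha,B_\beta)=-B_\alpha Q_\beta$, together with the obvious variants. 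Since $P_i$ and $Q_\beta$ are eigenfunctions of $\tau$ with the same eigenvalue $-n$, equation (\ref{tau-quotient-special}) then gives
$$\tau(f_i) = \frac{2(A_iB_\beta-P_iQ_\beta)}{Q_\beta^2} =: \frac{2N_i}{Q_\beta^2},$$
and a second pass through the table, expanding $N_j$, delivers two further identities $\kappa(P_i,N_j)=-P_iN_j$ and $\kappa(Q_\beta,N_j)=-Q_\beta N_j$ that carry most of the later workload.

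Step 2. Identities (1) and (3) I would prove by a polarization trick. For scalars $c_1,c_2\in\cn$ the function
$$F := c_1f_i + c_2f_j = \frac{c_1P_i+c_2P_j}{Q_\beta}$$
still fits the hypotheses of Lemma \ref{basic-un}: its numerator is $\sum_{k,\alpha} p_k\, a'_\alpha\, z_{k\alpha}$ with $a'_i=c_1a_i$, $a'_j=c_2a_j$ and $a'_\alpha=0$ otherwise, while $Q_\beta$ already has the admissible form. Hence $\kappa(F,F)=F\tau(F)$ and $\kappa(\tau(F),\tau(F))=-2\tau(F)^2$; expanding by bilinearity and reading off the coefficient of $c_1c_2$ gives (1) and (3) respectively. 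Identities (4) and (5) concern only a single $f_i$ and follow at once from parts (3) and (5) of Lemma \ref{lemma-kappa-tau-U(n)}, since $f_i$ has the structure assumed there.

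Step 3. The main obstacle is identity (2), which this polarization does not reach: applied to $\kappa(F,\tau(F))=\tau(F)^2$ it yields only the symmetric sum
$$\kappa(f_i,\tau(f_j))+\kappa(f_j,\tau(f_i)) = 2\,\tau(f_i)\,\tau(f_j),$$
not the asymmetric identity itself. I would dispatch (2) by direct computation: expanding $\kappa(f_i,\tau(f_j))=\kappa(P_i/Q_\beta,\,2N_j/Q_\beta^2)$ through the quotient rule for $\kappa$ and substituting the atomic identities of Step 1, the terms involving $\kappa(P_i,N_j)$ and $\kappa(Q_\beta,N_j)$ cancel against each other, leaving
$$\kappa(f_i,\tau(f_j)) = \frac{4\,N_i\,N_j}{Q_\beta^4} = \tau(f_i)\,\tau(f_j),$$
which is (2). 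The key to keeping this last computation mechanical is to treat $N_i$ and $N_j$ as opaque building blocks rather than re-expanding them.
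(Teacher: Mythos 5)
Your proposal is correct. The paper itself gives no explicit proof of this lemma: it is stated as a generalisation of Lemma \ref{basic-un}, and the intended argument is the one made explicit in Section \ref{the-general-case}, namely to introduce the auxiliary eigenfunctions $R=\sum_j p_j b_\beta z_{j\beta}$ and $S_j=\sum_k q_k a_j z_{kj}$ (your $B_\beta$ and $A_j$), verify the atomic $\kappa$-table (\ref{PP-QQ}) with $\mu=-1$ via Lemma \ref{lemma-U(n)}, and then grind out each identity through the quotient rules. Your Step 1 reproduces exactly that table, and your direct computation of (2) via $\tau(f_i)=2N_i/Q_\beta^2$ with $\kappa(P_i,N_j)=-P_iN_j$ and $\kappa(Q_\beta,N_j)=-Q_\beta N_j$ checks out. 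Where you genuinely diverge is in deriving (1) and (3) by polarising Lemma \ref{basic-un} applied to $F=c_1f_i+c_2f_j$, whose numerator still has the admissible rank-one form $\sum_{k,\alpha}p_ka'_\alpha z_{k\alpha}$; this buys you two of the five identities with no new computation, at the cost of a small genericity check (if $a_i=0$ then $f_i\equiv 0$ and the identities are vacuous, and otherwise the identity in $(c_1,c_2)$ holds off the origin and hence identically, so coefficient extraction is legitimate). Your observation that polarisation only yields the symmetrised version of (2), so that this identity must be handled separately, is exactly the point where a careless version of this shortcut would fail, and you handled it correctly. One cosmetic remark: the statement quantifies over $i,j\in\{1,\dots,n\}$ including $i=\beta$ or $j=\beta$, for which $f_\beta$ is harmonic; your argument covers these cases too since nothing in it uses $\tau(f_i)\neq 0$.
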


Then a repeated application of Lemma \ref{kappa-alpha-beta} provides the next result.

\begin{lemma}\label{cor}
For any $i,j\in\{1,\dots,n\}$ the conformality operator $\kappa$ satisfies the following identities
\begin{enumerate}
\item[(1)] $2\kappa(f_i^\ell,f_j^m)=\ell\,m\,f_i^{\ell-1}f_j^{m-1}(f_i\tau(f_j)+\tau(f_i)f_j)$,
\item[(2)] $\kappa(f_i^\ell,\tau(f_j)^m)=\ell\,m\,f_i^{\ell-1}\tau(f_i)\tau(f_j)^m$,
\item[(3)] $\kappa(\tau(f_i)^\ell,\tau(f_j)^m)=-2\,\ell\,m\,\tau(f_i)^\ell\tau(f_j)^m$.
\end{enumerate}
\end{lemma}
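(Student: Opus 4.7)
The plan is to deduce these three identities directly from Lemma \ref{kappa-alpha-beta} by repeatedly exploiting the fact that the conformality operator $\kappa$ is a derivation in each of its arguments. This is an immediate consequence of the Leibniz rule $\nabla(uv)=u\,\nabla v+v\,\nabla u$ together with the bilinearity of $g$; indeed, from the definition $\kappa(u,v)=g(\nabla u,\nabla v)$ one obtains
$$\kappa(uv,h)=u\,\kappa(v,h)+v\,\kappa(u,h),$$
which is exactly the identity (\ref{kappa-basic}) evaluated with $\tilde h=1$.

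From this derivation property, induction on the exponent yields the power rule $\kappa(f_i^\ell,h)=\ell\,f_i^{\ell-1}\kappa(f_i,h)$ for any positive integer $\ell$ and any locally defined function $h$. Applying this in both slots gives
$$\kappa(f_i^\ell,f_j^m)=\ell\,m\,f_i^{\ell-1}f_j^{m-1}\kappa(f_i,f_j),$$
$$\kappa(f_i^\ell,\tau(f_j)^m)=\ell\,m\,f_i^{\ell-1}\tau(f_j)^{m-1}\kappa(f_i,\tau(f_j)),$$
$$\kappa(\tau(f_i)^\ell,\tau(f_j)^m)=\ell\,m\,\tau(f_i)^{\ell-1}\tau(f_j)^{m-1}\kappa(\tau(f_i),\tau(f_j)).$$

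To finish, I substitute the three base identities of Lemma \ref{kappa-alpha-beta}, namely $2\kappa(f_i,f_j)=f_j\tau(f_i)+\tau(f_j)f_i$, $\kappa(f_i,\tau(f_j))=\tau(f_i)\tau(f_j)$ and $\kappa(\tau(f_i),\tau(f_j))=-2\tau(f_i)\tau(f_j)$, into the three displayed equations above. A routine rearrangement of the powers then yields exactly the statements (1), (2) and (3) of the lemma.

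There is no genuine obstacle here; the only thing to verify carefully is the power rule for $\kappa$, which is a straightforward induction. Everything else is bookkeeping of exponents.
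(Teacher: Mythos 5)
Your proposal is correct and is essentially the paper's own argument: the paper simply states that the lemma follows by ``repeated application of Lemma \ref{kappa-alpha-beta}'', and your explicit route --- the derivation property of $\kappa$ obtained from (\ref{kappa-basic}) with $\tilde h=1$, the resulting power rule $\kappa(f_i^\ell,h)=\ell f_i^{\ell-1}\kappa(f_i,h)$, and substitution of the three base identities --- is exactly how that repeated application unwinds (and matches the chain-rule computation the paper carries out in detail for the analogous Lemma \ref{lemma-kappa-tau-U(n)}). The exponent bookkeeping in all three items checks out.
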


With these preparations at hand we can now construct proper biharmonic functions.
Below we will deal with two examples.

\begin{example}\label{example-V}
Let $V$ be the $4$-dimensional complex vector space with basis 
$$\B=
\{
f_1f_2,\,
f_1\tau(f_2),\,
\tau(f_1)f_2,\,
\tau(f_1)\tau(f_2)
\}.
$$
Then the restriction $T$ of the Laplace-Beltrami operator $\tau$ to $V$ is a linear endomorphism $T:V\to V$ of $V$ and its kernel consists of the harmonic functions in $V$.  Let $M_T$ be the matrix of $T$ with respect to the basis $\B$.  Then a simple calculation shows that for each $c=(c_1,c_2,c_3,c_4)\in\cn^4$ we have 
\begin{equation*}
M_T\cdot c=
\begin{bmatrix}
0 & 0 & 0 &  0 \\
2 & 0 & 0 &  0 \\
2 & 0 & 0 &  0 \\
0 & 3 & 3 & -4 \\
\end{bmatrix}
\cdot 
\begin{bmatrix}
c_1\\
c_2\\
c_3\\
c_4\\
\end{bmatrix}
=
\begin{bmatrix}
0\\
2c_1\\
2c_1\\
3c_2+3c_3-4c_4
\end{bmatrix}
\end{equation*}
This means that every harmonic function $H(f_1,f_2)$ in $V$ is of the form 
$$H(f_1,f_2)=4\,(c_2\,f_1\tau(f_2)+c_3\,\tau(f_1)f_2)+3\,(c_2+c_3)\,\tau(f_1)\tau(f_2).$$
We have therefore constructed a complex two dimensional family of local harmonic functions on the special unitary group $\SU n$.

The biharmonic elements of $V$ form the kernel of $T^2$.  They can be determined by solving the following linear system
\begin{equation*}
M^2_T\cdot c=
\begin{bmatrix}
0  & 0   &   0 &  0 \\
0  & 0   &   0 &  0 \\
0  & 0   &   0 &  0 \\
12 & -12 & -12 & 16 \\
\end{bmatrix}
\cdot
\begin{bmatrix}
c_1\\
c_2\\
c_3\\
c_4\\
\end{bmatrix}
=4\cdot
\begin{bmatrix}
0\\
0\\
0\\
3c_1-3c_2-3c_3+4c_4
\end{bmatrix}.
\end{equation*}
From this we yield a complex three dimensional family of local biharmonic functions on the special unitary group $\SU n$.  Each such function is of the form
$$B(f_1,f_2)=4\,(c_1f_1f_2+c_2f_1\tau(f_2)
+c_3\tau(f_1)f_2)+3(c_2+c_3-c_1)\tau(f_1)\tau(f_2).$$
The reader should note that the function $B(f_1,f_2)$ is proper biharmonic if and only if $c_1\neq 0$.
\end{example}

\begin{example}
Let us now consider the six dimensional complex vector space $W$ with basis 
$$\B=
\{
f_1^2f_2,\,
f_1^2\tau(f_2),\,
f_1\tau(f_1)f_2,\,
f_1\tau(f_1)\tau(f_2),\,
\tau(f_1)^2f_2,\ 
\tau(f_1)^2\tau(f_2)
\}.
$$
We can now employ the same method as in Example \ref{example-V} and find that the harmonic functions in $W$ form a two dimensional subspace and are of the form
\begin{eqnarray*}
H(f_1,f_2)&=&c_1f_1^2f_2+c_2f_1^2\tau(f_2)
+c_3f_1\tau(f_1)f_2+c_4f_1\tau(f_1)\tau(f_2)\\
& &\qquad +c_5\tau(f_1)^2f_2+c_6\tau(f_1)^2\tau(f_2),
\end{eqnarray*}
where $c_1=0$, $c_4=2\,c_2+c_3$, $c_5=c_3$ and $6\,c_6=5\,c_2+5\,c_3$.
	
\medskip
	
By studying the bitension field $\tau^2$ it is not difficult to see that the biharmonic functions in $W$ form a complex three dimensional family.  They are of the form 
\begin{eqnarray*}
B(f_1,f_2)&=&c_1f_1^2f_2+c_2f_1^2\tau(f_2)
+c_3f_1\tau(f_1)f_2+c_4f_1\tau(f_1)\tau(f_2)\\
& &\qquad +c_5\tau(f_1)^2f_2+c_6\tau(f_1)^2\tau(f_2),
\end{eqnarray*} 
where the coefficients satisfy the following linear conditions
\begin{eqnarray*}
c_4&=&2\,c_2+c_3-3\,c_1,\\
2\,c_5&=&2\,c_3-3\,c_1,\\
6\,c_6&=&5\,c_2+5\,c_3-15\,c_1.
\end{eqnarray*}
As in Example \ref{example-V}, the function $B(f_1,f_2)$ is proper biharmonic if and only if $c_1\neq 0$.
\end{example}

As already mentioned above we will now not go on with generalising Theorem\,\ref{main-d1} to the multi-homogeneous polynomial setting. We will postpone this to the next section in which we 
deal with the construction of biharmonic functions not just on $\SU n$ but on a larger collection of Lie groups.


\section{Biharmonic Functions on compact Lie groups}\label{the-general-case}
In this section we generalise the considerations of Sections \ref{section-new-U(n)-I} and \ref{section-new-U(n)-II} to Lie groups  for which there exist eigenfunctions of the Laplace-Beltrami operator which satisfy several additional conditions.
These conditions are chosen such that an analogue of Lemma\,\ref{cor} holds.

\medskip

Let $G$ be a compact Lie subgroup of $\GLC n$ and for $N\in\mathbb{N}$ and $1\leq j\leq N$, let $P_j,Q,R,S_j:G\to\cn$ be eigenfunctions of the Laplace-Beltrami operator $\tau$ all with the same eigenvalue $\lambda$. Moreover, let $\mu$ be a constant such that the conformality operator $\kappa$ satisfies 
\begin{equation}\label{PP-QQ}
\begin{aligned}
&
\kappa(P_j,P_k)=\mu\, P_jP_k,\ \ 
\kappa(S_j,S_k)=\mu\, S_jS_k,\\
&
\kappa(Q,Q)=\mu\, Q^2,\ \ 
\kappa(R,R)=\mu\, R^2,\\ 
&
\kappa(Q,R)=\mu\, QR,\ \ 
\kappa(Q,S_j)=\mu\, QS_j,\\
&
\kappa(P_j,R)=\mu\, P_jR,\ \
\kappa(P_j,S_k)=\mu\, P_kS_j,\\
&
\kappa(P_j,Q)=\mu\, RS_j,\ \ 
\kappa(R,S_j)=\mu\, P_jQ.
\end{aligned}
\end{equation}
Further let $f_j:G^*\to\cn$ be the quotient $f_j=P_j/Q$ defined on the open and dense subset $G^*=\{p\in G|\ Q(p)\neq 0\}$ of $G$. 

\begin{remark}
At the first glance the conditions (\ref{PP-QQ}) might seem rather restrictive.
However, we will see at the end of this and in the following two sections that we can easily construct plenty of such functions $P_j,Q,R,$ and $S_j$ on $\SU n, \Sp n$ and $\SO n$, respectively, which satisfy these conditions. 
\end{remark}

\smallskip

Using conditions (\ref{PP-QQ}), a tedious but straightforward computation similar to those in the proof of Lemma\,\ref{lemma-kappa-tau-U(n)} yields the next result.

\begin{lemma}\label{basic-identities}
If $m,\ell$ are positive integers and $j,k\in\{1,\dots,N\}$ then the conformality operator $\kappa$ satisfies the following identities
\begin{enumerate}
\item[(1)] $2\kappa(f_i^m,f_j^\ell)=m\,\ell\,f_i^{m-1}f_j^{\ell-1}(f_i\tau(f_j)+\tau(f_i)f_j)$,
\item[(2)] $\kappa(f_i^m,\tau(f_j)^\ell)=m\,\ell\,f_i^{m-1}\tau(f_i)\tau(f_j)^\ell$,
\item[(3)] $\kappa(\tau(f_i)^m,\tau(f_j)^\ell)=2\,\mu\,m\,\ell\,\tau(f_i)^m\tau(f_j)^\ell$,
\item[(4)] 
$\tau(f_i^m)=m^2f_i^{m-1}\tau(f_i)$,
\item[(5)] 
$\tau(\tau(f_i)^m)=2\,\mu\,m\,(m-1)\,\tau(f_i)^m$.
\end{enumerate}
\end{lemma}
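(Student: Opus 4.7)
The plan is to reduce everything to the base cases $m=\ell=1$ of identities (1)--(3), together with $\tau^{2}(f_i)=0$, and then lift to arbitrary powers by induction using the product rules (\ref{equation-basic}) and (\ref{kappa-basic}), following the pattern of Lemma \ref{lemma-kappa-tau-U(n)}. The base computations themselves are direct algebraic manipulations of the ten identities (\ref{PP-QQ}).

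For the base cases, I would first apply (\ref{tau-quotient-special}) to $f_i=P_i/Q$, using $\kappa(Q,Q)=\mu Q^{2}$ and $\kappa(P_i,Q)=\mu RS_i$, to obtain
\begin{equation*}
\tau(f_i)=\frac{2\mu}{Q^{2}}\,(P_iQ-RS_i).
\end{equation*}
Next I would compute $\kappa(f_i,f_j)$ via the bilinear quotient formula analogous to (\ref{kappa-quotient}): expanding $\nabla(P_i/Q)=(Q\,\nabla P_i-P_i\,\nabla Q)/Q^{2}$ produces four $\kappa$-terms in $P_i,P_j,Q$, each handled by (\ref{PP-QQ}), and the result simplifies to $2\kappa(f_i,f_j)=f_i\tau(f_j)+\tau(f_i)f_j$. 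The same expansion strategy, now with $\tau(f_j)$ substituted as the quotient above, gives $\kappa(f_i,\tau(f_j))=\tau(f_i)\tau(f_j)$ and $\kappa(\tau(f_i),\tau(f_j))=2\mu\,\tau(f_i)\tau(f_j)$; the crucial simplifications are driven by the ``swap'' identities $\kappa(P_j,Q)=\mu RS_j$ and $\kappa(R,S_j)=\mu P_jQ$, which interchange the roles of the pairs $(P_j,Q)$ and $(R,S_j)$.

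Once the base cases are established, the induction is routine and mirrors Lemma \ref{lemma-kappa-tau-U(n)}. For (4) I would induct on $m$ via $\tau(f_i^{m+1})=\tau(f_i)f_i^{m}+2\kappa(f_i,f_i^{m})+f_i\tau(f_i^{m})$, using $\kappa(f_i,f_i^{m})=m f_i^{m-1}\kappa(f_i,f_i)=m f_i^{m}\tau(f_i)$ (which follows from the base case of (1) with $i=j$); the coefficients aggregate to $(m+1)^{2}$. For (5) I would first verify the case $m=1$, namely $\tau^{2}(f_i)=0$, by substituting the explicit formula for $\tau(f_i)$ into $\tau$ and simplifying via (\ref{PP-QQ}), mirroring the end of the proof of Theorem \ref{theorem-SU(n)}; the induction on $m\geq 2$ then proceeds exactly as in Lemma \ref{lemma-kappa-tau-U(n)}(5). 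Finally, (1)--(3) for general $m,\ell$ follow by induction on the exponents using (\ref{kappa-basic}) and the already established (4).

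The main obstacle is the base-case verification of $\kappa(\tau(f_i),\tau(f_j))=2\mu\,\tau(f_i)\tau(f_j)$. After expanding via the quotient formulas and bilinearity, this becomes a rational expression in $P_i,P_j,Q,R,S_i,S_j$ with many terms, and the ten identities (\ref{PP-QQ}) must collapse it precisely. This is where the axioms (\ref{PP-QQ}) are most delicate and where an arithmetic slip is most likely, though the calculation specialises at $\mu=-1$ to the one performed in Lemma \ref{basic-un}, which offers a reliable template.
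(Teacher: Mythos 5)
Your proposal is correct and follows essentially the same route as the paper, which simply declares the result to be ``a tedious but straightforward computation similar to those in the proof of Lemma~\ref{lemma-kappa-tau-U(n)}'' using the conditions (\ref{PP-QQ}); your reduction to the base cases $m=\ell=1$ (including $\tau^2(f_i)=0$) via the quotient formulas and the swap identities $\kappa(P_j,Q)=\mu RS_j$, $\kappa(R,S_j)=\mu P_jQ$, followed by induction on the exponents, is exactly the intended argument, just spelled out in more detail than the paper provides.
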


We have now gathered all the tools we need for the construction of biharmonic multi-homogeneous polynomials on $G$.
As an intermediate step we will however first construct a wealth of \textit{harmonic} multi-homogeneous polynomials on $G$.

\begin{theorem}\label{general-harmonic}
Let $1\leq m\leq N$ be given and $f_i=P_i/Q$, $i=1,\dots, m$ be proper biharmonic functions. Then the function 
$$F=\sum_{k_1=0}^{d_1}\dots\sum_{k_m=0}^{d_m}c_{k_1,\dots,k_m} f_1^{d_1-k_1}\tau(f_1)^{k_1}\dots  f_m^{d_m-k_m}\tau(f_m)^{k_m}$$
is harmonic if and only if
\begin{multline}\label{lin-sys-two}
-2\,\mu\,c_{k_1,\dots,k_m}((\sum_{i=1}^mk_i)^2-\sum_{i=1}^mk_i)=\\\sum_{j=1}^mc_{k_1,\dots k_{j-1},k_j-1,k_{j+1},\dots,k_m}(d_j+1-k_j)(\sum_{i=1}^m(d_i+k_i)-1)
\end{multline}
holds for all \,$0\leq k_i\leq d_i$.
We thus obtain an $m$-parameter family of harmonic functions.
\end{theorem}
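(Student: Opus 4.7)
The plan is to compute $\tau(F)$ by linearity and set it equal to zero, collecting coefficients of monomials. The heart of the argument is to compute $\tau$ of a single multi-monomial
$$M_{\vec k}\;=\;\prod_{i=1}^m f_i^{d_i-k_i}\tau(f_i)^{k_i}\;=\;\prod_{i=1}^m T_i,\qquad T_i:=f_i^{d_i-k_i}\tau(f_i)^{k_i},$$
and express the result as a linear combination of $M_{\vec k}$ and the shifted monomials $M_{\vec k + e_l}$. Once this is done, a single re-indexing step produces the difference equation in the statement.

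First I would use the iterated product rule
$$\tau\Bigl(\prod_{i=1}^m T_i\Bigr)=\sum_{i}\tau(T_i)\prod_{j\neq i}T_j+2\sum_{i<j}\kappa(T_i,T_j)\prod_{l\neq i,j}T_l,$$
which follows from (\ref{equation-basic}) by induction on $m$. Using Lemma\,\ref{basic-identities}(4)--(5) together with one application of (\ref{equation-basic}) and Lemma\,\ref{basic-identities}(2), one gets
$$\tau(T_i)=(d_i^2-k_i^2)\,f_i^{d_i-k_i-1}\tau(f_i)^{k_i+1}+2\mu\,k_i(k_i-1)\,T_i,$$
i.e.\ a multiple of the "shift" monomial $M_{\vec k+e_i}/\prod_{j\neq i}T_j$ plus a multiple of $T_i$ itself. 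A similar (longer) computation, using the bilinear Leibniz rule (\ref{kappa-basic}) for $\kappa$ together with Lemma\,\ref{basic-identities}(1)--(3), shows that $\kappa(T_i,T_j)\prod_{l\neq i,j}T_l$ contributes to only three monomials: $M_{\vec k+e_i}$, $M_{\vec k+e_j}$ and $M_{\vec k}$, with coefficients $\tfrac12(d_i-k_i)(d_j+k_j)$, $\tfrac12(d_j-k_j)(d_i+k_i)$ and $2\mu\,k_ik_j$ respectively. Summing over $i$ and $i<j$, the coefficient of $M_{\vec k}$ collapses via the elementary identity $\sum_i k_i^2+2\sum_{i<j}k_ik_j=(\sum_i k_i)^2$ to
$$2\mu\Bigl[\,\bigl(\textstyle\sum_i k_i\bigr)^2-\sum_i k_i\Bigr],$$
and the coefficient of $M_{\vec k+e_l}$ collapses to $(d_l-k_l)\sum_i(d_i+k_i)$.

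The final step is purely formal: replace the dummy index $\vec k+e_l$ by $\vec k$ (so the old $k_l$ becomes $k_l-1$ and $\sum_i(d_i+k_i)$ becomes $\sum_i(d_i+k_i)-1$), sum $\tau(F)=\sum_{\vec k}c_{\vec k}\tau(M_{\vec k})$, and equate the coefficient of each $M_{\vec k}$ to $0$. Assuming (generic) linear independence of the monomials $M_{\vec k}$, this yields exactly the claimed recursion (\ref{lin-sys-two}). The parameter count is then read off: at $\vec k=\vec 0$ both sides vanish automatically; at $|\vec k|=1$, say $\vec k=e_j$, the left side vanishes while the right side reduces to $c_{\vec 0}\,d_j\sum_i d_i$, forcing $c_{\vec 0}=0$; for $|\vec k|\geq 2$ the prefactor $(\sum k_i)^2-\sum k_i$ is non-zero, so $c_{\vec k}$ is determined by the $c_{\vec k-e_l}$. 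Hence the $m$ coefficients $c_{e_1},\dots,c_{e_m}$ are the only free ones, giving the claimed $m$-parameter family.

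The main obstacle I expect is purely bookkeeping: correctly tracking which of the indices $k_l$ gets shifted by each of the three terms in $\kappa(T_i,T_j)$, and then verifying the re-indexing $\vec k\leftrightarrow \vec k-e_l$ produces precisely the factor $\sum_i(d_i+k_i)-1$ on the right-hand side. A minor but genuine point is the linear independence of the monomials $M_{\vec k}$ (so that "coefficient equals zero" is meaningful); this holds generically because $f_1,\tau(f_1),\dots,f_m,\tau(f_m)$ are algebraically independent rational functions on a dense subset of $G$, which is the setting ensured by the hypotheses on $P_j,Q,R,S_j$.
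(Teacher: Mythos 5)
Your proposal is correct and follows essentially the same route as the paper: the iterated Leibniz rule for $\tau$ of a product, the identical expansions of $\tau(F_i)$ and $\kappa(F_i,F_j)$ via Lemma~\ref{basic-identities}, and comparison of coefficients after shifting indices. If anything, your explicit re-indexing $\vec k+e_l\mapsto\vec k$ and the collapse of the diagonal coefficient via $\sum_i k_i^2+2\sum_{i<j}k_ik_j=(\sum_i k_i)^2$ is more transparent than the paper's appeal to ``an induction argument,'' and you rightly flag the (shared, implicit) assumption that the monomials $f_1^{a_1}\tau(f_1)^{b_1}\cdots$ are linearly independent, which is needed for the ``only if'' direction.
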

\begin{proof}
Below we make use of the short hand notation
\begin{equation*}
F_i=f_i^{d_i-k_i}\tau(f_i)^{k_i}.
\end{equation*}
Therefore we have
$$F=\sum_{k_1=0}^{d_1}\dots\sum_{k_m=0}^{d_m}c_{k_1,\dots,k_m}F_1\dots F_m.$$
Multiple use of equations (\ref{equation-basic}) and (\ref{kappa-basic}) thus
	yields
	\begin{equation}
	\label{firstd}
	\begin{aligned}
	\tau(F)=\sum_{k_1=0}^{d_1}\dots\sum_{k_m=0}^{d_m}c_{k_1,\dots,k_m}&\big(\sum_{j=1}^m\tau(F_j)\prod_{i=1,i\neq j}^mF_i\\&+2\sum_{i<j}\kappa(F_i,F_j)\prod_{\ell=1,\ell\neq i, \ell\neq j}^mF_{\ell}\big).
	\end{aligned}
	\end{equation}
	Using Lemma\,\ref{basic-identities} we obtain
	\begin{equation*}
	\begin{aligned}
	\tau(F_i)=(d_i^2-k_i^2){f_i}^{d_i-k_i-1}\tau(f_i)^{k_i+1}+2\mu\, k_i(k_i-1)f_i^{d_i-k_i}\tau(f_i)^{k_i}
	\end{aligned}
	\end{equation*}
	and
		\begin{equation*}
	\begin{aligned}
	\kappa(F_i,F_j)=&2\mu\,k_ik_jf_i^{d_i-k_i}\tau(f_i)^{k_i}f_j^{d_j-k_j}\tau(f_j)^{k_j}\\&+\tfrac{1}{2}(d_j-k_j)(d_i+k_i)f_i^{d_i-k_i}\tau(f_i)^{k_i}f_j^{d_j-k_j-1}\tau(f_j)^{k_j+1}\\&+\tfrac{1}{2}(d_i-k_i)(d_j+k_j)f_j^{d_j-k_j}\tau(f_j)^{k_j}f_i^{d_i-k_i-1}\tau(f_i)^{k_i+1}.
	\end{aligned}
	\end{equation*}
	Plugging these results into Equation\,\ref{firstd} and comparing coefficients yields the linear system
	\begin{equation*}
		\begin{aligned}
			&\sum_{j=1}^m\big(c_{k_1,\dots,k_{j-1},k_j-1,k_{j+1},\dots,k_m}(d_j^2-(k_j-1)^2)-2c_{k_1,\dots,k_m}k_j(k_j-1)\big)\\&
			+\sum_{i<j}\big(c_{k_1,\dots,k_{j-1},k_j-1,k_{j+1},\dots,k_m}(d_i+k_i)(d_j+1-m_j)/2\\
			&\hspace{1cm}+c_{k_1,\dots,k_{i-1},k_i-1,k_{i+1},\dots,k_m}(d_i+1-k_i)(d_j+m_j)/2\\
			&\hspace{1cm}+2k_ik_j\,\mu\,c_{k_1,\dots,k_m}\big)=0.
		\end{aligned}
	\end{equation*}
	One easily verifies that for $m=1$ this linear system coincides with the system given by (\ref{lin-sys-two}) for that special case.
	An induction argument then establishes the first part of the claim.
	
	\smallskip
	
	Finally, observe that exactly the coefficients $c_{k_1,\dots,k_m}$ with $$(k_1,\dots,k_m)=(0,\dots,0,1,0,\dots,0)$$
	determine all remaining coefficients $c_{k_1,\dots,k_m}$.
	There  are exactly $m$ such coefficients, which establishes the claim.
\end{proof}

In the same vain as in the proceeding theorem we will now examine multi-homogeneous polynomials for biharmonicity.
The idea is to rewrite $\tau(F)$ such that it has the same structure as $F$.
When applying $\tau$ to $\tau(F)$ we can thus make use of the considerations contained in the proof of Theorem\,\ref{general-harmonic}.

\begin{theorem}
Let $1\leq m\leq N$ be given and $f_i=P_i/Q$, $i=1,\dots, m$ be proper biharmonic functions. The function 
	$$F=\sum_{k_1=0}^{d_1}\dots\sum_{k_m=0}^{d_m}c_{k_1,\dots,k_m} f_1^{d_1-k_1}\tau(f_1)^{k_1}\dots  f_m^{d_m-k_m}\tau(f_m)^{k_m}$$
	is proper biharmonic if and only if
	\begin{equation}
	\label{lin-sys-2}
	\begin{aligned}
	-2\,\mu\,&\tilde{c}_{k_1,\dots,k_m}((\sum_{i=1}^mk_i)^2-\sum_{i=1}^mk_i)=\\&\sum_{j=1}^m\tilde{c}_{k_1,\dots k_{j-1},k_j-1,k_{j+1},\dots,k_m}(d_j+1-k_j)(\sum_{i=1}^m(d_i+k_i)-1)
	\end{aligned}
	\end{equation}
	holds for all $0\leq k_i\leq d_i+1$, $i\in\{1,\dots,m\}$,
	where
	\begin{equation*}
		\begin{aligned}
			\tilde{c}_{k_1,\dots,k_m}:=&2\,\mu\,c_{k_1,\dots,k_m}(\sum_{j=1}^mk_j(k_j-1)+2\sum_{i<j}k_jk_i)\\
			&+\sum_{j=1}^mc_{k_1,\dots,k_{j-1},k_j-1,k_{j+1},\dots,k_m}(d_j^2-(k_j-1)^2)\\
			&+\sum_{i<j}c_{k_1,\dots,k_{i-1},k_i-1,k_{i+1},\dots,k_m}(d_i+1-k_i)(d_j+k_j)\\
			&+\sum_{i<j}c_{k_1,\dots,k_{j-1},k_j-1,k_{j+1},\dots,k_m}(d_j+1-k_j)(d_i+k_i).
		\end{aligned}
	\end{equation*}
	We thus obtain a $m$-parameter family of biharmonic functions.
\end{theorem}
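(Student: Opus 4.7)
The plan is to reduce the biharmonicity condition $\tau^2(F)=0$ to the harmonicity condition $\tau(\tau(F))=0$, and then invoke Theorem~\ref{general-harmonic} applied to $\tau(F)$.

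First I would reproduce the computation at the start of the proof of Theorem~\ref{general-harmonic} to write
$$\tau(F)=\sum_{k_1,\dots,k_m}c_{k_1,\dots,k_m}\Bigl(\sum_{j=1}^m\tau(F_j)\prod_{i\neq j}F_i+2\sum_{i<j}\kappa(F_i,F_j)\prod_{\ell\neq i,j}F_\ell\Bigr),$$
where $F_j=f_j^{d_j-k_j}\tau(f_j)^{k_j}$. Then I would use Lemma~\ref{basic-identities} to expand $\tau(F_j)$ and $\kappa(F_i,F_j)$ as linear combinations of monomials of the same multi-homogeneous form. A direct inspection of those formulae shows that both $\tau(F_j)$ and $\kappa(F_i,F_j)$ preserve the multi-degree $(d_1,\dots,d_m)$, so $\tau(F)$ is itself a multi-homogeneous polynomial of the same shape as $F$.

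The central bookkeeping step is to read off the coefficient $\tilde{c}_{k_1,\dots,k_m}$ of the monomial $f_1^{d_1-k_1}\tau(f_1)^{k_1}\cdots f_m^{d_m-k_m}\tau(f_m)^{k_m}$ in $\tau(F)$. Four sources contribute: (i) the second term of each $\tau(F_j)$ together with the first term of each $\kappa(F_i,F_j)$, both of which keep all indices unchanged and produce the prefactor $2\mu\bigl(\sum_jk_j(k_j-1)+2\sum_{i<j}k_ik_j\bigr)$ in front of $c_{k_1,\dots,k_m}$; (ii) the first term of $\tau(F_j)$, which shifts $k_j\mapsto k_j+1$ and contributes $(d_j^2-(k_j-1)^2)\,c_{k_1,\dots,k_j-1,\dots,k_m}$; (iii) the third term of $\kappa(F_i,F_j)$ for $i<j$, which shifts $k_i\mapsto k_i+1$ and contributes $(d_i+1-k_i)(d_j+k_j)\,c_{k_1,\dots,k_i-1,\dots,k_m}$; and (iv) the second term of $\kappa(F_i,F_j)$ for $i<j$, which shifts $k_j\mapsto k_j+1$ and contributes $(d_j+1-k_j)(d_i+k_i)\,c_{k_1,\dots,k_j-1,\dots,k_m}$. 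Summing these four contributions reproduces precisely the formula for $\tilde{c}_{k_1,\dots,k_m}$ given in the statement, so that
$$\tau(F)=\sum_{k_1,\dots,k_m}\tilde{c}_{k_1,\dots,k_m}\,f_1^{d_1-k_1}\tau(f_1)^{k_1}\cdots f_m^{d_m-k_m}\tau(f_m)^{k_m}.$$

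With this normal form in place, $F$ is biharmonic if and only if $\tau(F)$ is harmonic. Applying Theorem~\ref{general-harmonic} to the function $\tau(F)$ with coefficients $\tilde{c}_{k_1,\dots,k_m}$ in place of $c_{k_1,\dots,k_m}$ immediately yields the stated system~(\ref{lin-sys-2}); the enlargement of the index range to $0\le k_i\le d_i+1$ only absorbs boundary cases where every entry of $\tilde{c}$ involved vanishes automatically. For \emph{proper} biharmonicity one must additionally require $\tau(F)\not\equiv 0$, i.e.\ that not all $\tilde{c}_{k_1,\dots,k_m}$ vanish, and the parameter count follows from Theorem~\ref{general-harmonic} together with one extra degree of freedom. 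The main obstacle is the combinatorial bookkeeping in the preceding paragraph: one must carefully track whether an index-shift occurs in the $i$-th or the $j$-th factor of $\kappa(F_i,F_j)$, which is exactly why the third and fourth sums in the definition of $\tilde{c}$ appear as separate, non-symmetric sums over $i<j$. Once this is carried out correctly, the rest of the argument is a direct reduction to the harmonic case already handled in Theorem~\ref{general-harmonic}.
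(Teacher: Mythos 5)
Your proposal is correct and follows essentially the same route as the paper: rewrite $\tau(F)$ in the same multi-homogeneous normal form with coefficients $\tilde{c}_{k_1,\dots,k_m}$ (using the convention that out-of-range $c$'s vanish), then apply Theorem~\ref{general-harmonic} to $\tau(F)$. Your bookkeeping of the four contributions to $\tilde{c}$ matches the paper's computation exactly, and your explicit remark that properness additionally requires $\tau(F)\not\equiv 0$ only makes precise a point the paper leaves implicit.
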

\begin{proof}
As mentioned above, we first rewrite $\tau(F)$ such that it has the same structure as $F$.

\smallskip

	From (\ref{firstd}) we get
	\begin{equation*}
		\begin{aligned}
			\tau(&F)=
			\sum_{k_1=0}^{d_1+1}\dots\sum_{k_m=0}^{d_m+1}\big(2c_{k_1,\dots,k_m}\mu(\sum_{j=1}^mk_j(k_j-1)+2\sum_{i<j}k_jk_i)\\
			&+\sum_{j=1}^mc_{k_1,\dots,k_{j-1},k_j-1,k_{j+1},\dots,k_m}(d_j^2-(k_j-1)^2)\\
			&+\sum_{i<j}c_{k_1,\dots,k_{i-1},k_i-1,k_{i+1},\dots,k_m}(d_i+1-k_i)(d_j+k_j)\\
			&+\sum_{i<j}c_{k_1,\dots,k_{j-1},k_j-1,k_{j+1},\dots,k_m}(d_j+1-k_j)(d_i+k_i)\big)\Pi_{\ell=1}^mF_{\ell}.
		\end{aligned}
	\end{equation*}
	Here we make use of the convention that $c_{k_1,\dots,k_m}=0$ if either one
	of the indices is less than $0$ or
	there exists an $i\in\{1,\dots,m\}$
	such that $k_i>d_i$.
	Thus we have
	\begin{equation*}
		\begin{aligned}
			\tau(&F)=\sum_{k_1=0}^{d_1+1}\dots\sum_{k_m=0}^{d_m+1}\tilde{c}_{k_1,\dots,k_m}\Pi_{\ell=1}^mF_{\ell}.
		\end{aligned}
	\end{equation*}
	By Theorem\,\ref{general-harmonic} the identity $\tau^2(F)=0$ is therefore satisfied if and only if
	\begin{equation}
	\label{lin-sys}
	\begin{aligned}
	-2\mu&\tilde{c}_{k_1,\dots,k_m}((\sum_{i=1}^mk_i)^2-\sum_{i=1}^mk_i)=\\&\sum_{j=1}^m\tilde{c}_{k_1,\dots k_{j-1},k_j-1,k_{j+1},\dots,k_m}(d_j+1-k_j)(\sum_{i=1}^m(d_i+k_i)-1)
	\end{aligned}
	\end{equation}
	holds for all $0\leq k_i\leq d_i+1$, $i\in\{1,\dots,m\}$.
	
	\smallskip
	
	Finally, observe that exactly the coefficients $\tilde{c}_{k_1,\dots,k_m}$ with $$(k_1,\dots,k_m)=(0,\dots,0,1,0,\dots,0)$$ and $(k_1,\dots,k_m)=(0,\dots,0)$
	determine all remaining coefficients $\tilde{c}_{k_1,\dots,k_m}$.
	These in turn are determined by $c_{k_1,\dots,k_m}$ with $(k_1,\dots,k_m)=(0,\dots,0,1,0,\dots,0)$
	and $(k_1,\dots,k_m)=(0,\dots,0)$.
	We thus obtain a $m+1$-parameter family of biharmonic maps.
\end{proof}

The proof of the preceding theorem implies that for each choice of
$p,q\in\cn^n$ and $m,d_1,\dots,d_m\in\mathbb{N}$ 
there is essentially just one biharmonic map.

\begin{corollary}\label{main-cor}
	Let $p,q\in\cn^n$, $1\leq m\leq N$ and $d_1,\dots,d_m\in\mathbb{N}$ be given.
	The above construction yields -- up to scaling -- one proper biharmonic function 
	of the form 
	\begin{equation*}
\Phi_{d_1,\dots,d_m}(f_1,\dots,f_m)=\sum_{k_1=0}^{d_1}\dots\sum_{k_m=0}^{d_m}c_{k_1,\dots,k_m}f_1^{d_1-k_1}\tau(f_1)^{k_1}\dots f_m^{d_m-k_m}\tau(f_m)^{k_m},
\end{equation*}
	defined on a dense subset of $G$.
\end{corollary}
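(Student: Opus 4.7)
The strategy is to combine the parameter counts from Theorem \ref{general-harmonic} and the preceding theorem dimensionally. The preceding theorem gives an $(m+1)$-parameter family of biharmonic functions of the prescribed form, with free parameters $c_{0,\dots,0}$ together with the $m$ coefficients $c_{e_j}$ (where $e_j$ denotes the multi-index with a single $1$ in position $j$ and zeros elsewhere). Theorem \ref{general-harmonic} gives an $m$-parameter family of harmonic functions inside this, with free parameters exactly $c_{e_1},\dots,c_{e_m}$. The corollary will follow once I show that the harmonic subspace sits inside the biharmonic one as an affine hyperplane cut out by the single equation $c_{0,\dots,0}=0$.

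First I would verify that harmonicity forces $c_{0,\dots,0}=0$. Evaluating the recurrence (\ref{lin-sys-two}) at multi-index $e_j$ for any $j$ with $d_j\ge 1$, the left-hand side vanishes (since $(\sum k_i)^2-\sum k_i=0$ there), while the right-hand side reduces, via the convention that coefficients with a negative index vanish, to the single term $c_{0,\dots,0}\cdot d_j\cdot\sum_i d_i$; this forces $c_{0,\dots,0}=0$. Second, I would check that the biharmonic system (\ref{lin-sys-2}) imposes no analogous constraint on $c_{0,\dots,0}$: evaluated at the same multi-indices it controls $\tilde c_{0,\dots,0}$ rather than $c_{0,\dots,0}$ directly, and $\tilde c_{0,\dots,0}$ in turn depends on both $c_{0,\dots,0}$ and the $c_{e_j}$, so $c_{0,\dots,0}$ remains free. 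This is exactly what distinguishes the $(m+1)$-dimensional biharmonic family from its harmonic $m$-dimensional hyperplane.

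It then follows that a biharmonic function of the form $\Phi_{d_1,\dots,d_m}$ is \emph{proper} biharmonic, i.e.\ not harmonic, precisely when $c_{0,\dots,0}\neq 0$. Any two proper biharmonic members of the family differ by an element of the $m$-parameter harmonic subspace, so modulo this subspace and modulo overall rescaling there is a unique proper biharmonic function of the prescribed form; one canonical representative is obtained by setting $c_{0,\dots,0}=1$ and $c_{e_j}=0$ for every $j$, after which the remaining coefficients $c_{k_1,\dots,k_m}$ are determined recursively by (\ref{lin-sys-2}). The only real work is the bookkeeping of the two recurrences to isolate $c_{0,\dots,0}$ as the distinguished ``non-harmonic'' direction; once this is pinned down the corollary reduces to a one-line linear-algebra statement about the codimension-one inclusion of the harmonic family inside the biharmonic one.
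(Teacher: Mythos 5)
Your proposal is correct and follows essentially the same route as the paper, which deduces the corollary from the parameter counts at the end of the two preceding theorems: the biharmonic family is $(m+1)$-dimensional in $(c_{0,\dots,0},c_{e_1},\dots,c_{e_m})$ while the harmonic family is the $m$-dimensional slice $c_{0,\dots,0}=0$, so modulo harmonics and scaling there is one proper biharmonic function. You merely make explicit (via evaluating the recurrences at the multi-indices $e_j$) what the paper leaves as an observation.
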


This corollary completes the construction of biharmonic functions.

\smallskip

In what follows we apply these results to finish the construction of biharmonic multi-homogeneous polynomials on $\SU n$ which we started in Section\,\ref{section-new-U(n)-II}.
In order to accomplish this we need to find eigenfunctions
$P_j,Q,R,S_j:\GLC n\to\cn$ of the Laplace-Beltrami operator $\tau$ with the same eigenvalue $\lambda$ which satisfy the conditions (\ref{PP-QQ}).
Recall that in Section\,\ref{section-new-U(n)-II}
we have chosen the 
$P_j,Q:\U n\to\cn$ to be 
$$
P_j=\sum_{k=1}^np_k a_{j}z_{kj}\ \ \text{and}\ \  Q:=Q_\beta=\sum_{k=1}^nq_k b_{\beta}z_{k\beta}.
$$
We introduce $R,S_j:\U n\to\cn$ by
$$
R=\sum_{k=1}^np_k b_{\beta}z_{k\beta}\ \ \text{and}\ \ S_j=\sum_{k=1}^nq_k a_{j}z_{kj}.
$$
By straightforward computations which make use of Equations\,\ref{equation-basic} and \ref{kappa-basic} as well as Lemma\,\ref{lemma-U(n)},
it follows that this set of functions satisfies the conditions (\ref{PP-QQ}) with $\mu=-1$.

\smallskip

Note that according to Theorem \ref {theorem-SU(n)} the function $f_{i}=P_i/Q$ is harmonic if $i=\beta$ and proper biharmonic otherwise.
Hence there exist $n-1$ proper biharmonic functions $f_i$. Thus in the above considerations we have $N=n-1$.
Consequently, Corollary\,\ref{main-cor} implies Theorem\,\ref{main-result} for $G=\SU n$.

\smallskip

In the following two sections, Sections\,\ref{section-Sp(n)} and \ref{section-SO(n)}, we will use the results of the present section to construct biharmonic 
functions on $\Sp n$ and $\SO n$, respectively.


\section{Biharmonic Functions on $\Sp n$}\label{section-Sp(n)}

In this section we show that the quaternionic unitary group $\Sp n$ falls into the general scheme that we have developed.  This can be applied to construct complex-valued proper biharmonic functions on open and dense subsets of $\Sp n$.  They are quotients of homogeneous polynomials in the matrix coefficients of the standard irreducible complex representation of $\Sp n$.

\smallskip

The quaternionic unitary group $\Sp n$ is a compact subgroup of $\U {2n}$. It is the intersection of $\U{2n}$ and the standard complex representation of the quaternionic general linear group $\GLH n$ in $\cn^{2n\times 2n}$ with 
$$
(z+jw)\mapsto q=\begin{bmatrix}z & w \\ -\bar w & \bar
z\end{bmatrix}.
$$
For the standard complex representation of the Lie algebra $\sp n$ of $\Sp n$ we have 
$$
\sp{n}=\bigg\{\begin{bmatrix} Z & W
\\ -\bar W & \bar Z\end{bmatrix}\in\cn^{2n\times 2n}
\ |\ Z^*+Z=0,\ W^t-W=0\bigg\}.
$$
The canonical orthonormal basis $\B$ for $\sp n$ is the union of the
following three sets

$$
\bigg\{\frac 1{\sqrt 2}
\begin{bmatrix}Y_{rs} & 0 \\
0 & Y_{rs}\end{bmatrix},
\frac 1{\sqrt 2}
\begin{bmatrix} iX_{rs} & 0 \\
0 & -iX_{rs} 
\end{bmatrix}\ |\ 1\le r<s\le n\bigg\},
$$

$$
\bigg\{
\frac 1{\sqrt 2}
\begin{bmatrix}
0 & X_{rs} \\
-X_{rs} & 0
\end{bmatrix}.
\frac 1{\sqrt 2}
\begin{bmatrix}
0 & iX_{rs} \\
iX_{rs} & 0
\end{bmatrix}
\ |\ 1\le r<s\le n\bigg\},
$$

$$
\bigg\{
\frac 1{\sqrt 2}
\begin{bmatrix}
0 & D_{r}  \\
-D_{r} & 0
\end{bmatrix},
\frac 1{\sqrt 2}
\begin{bmatrix}
0 & iD_{r} \\
iD_{r} & 0
\end{bmatrix},
\frac 1{\sqrt 2}
\begin{bmatrix}
iD_{r} & 0 \\
0 & -iD_{r}
\end{bmatrix}
\ |\ 1\le r\le n\bigg\}.
$$
\medskip

The following fundamental result can be found in Lemma 6.1 of \cite{Gud-Mon-Rat-1}.  It describes the behaviour of the tension field $\tau$ and the conformality operator $\kappa$ on the quaternionic unitary group $\Sp n$.  

\begin{lemma}\label{lemma-basic-Sp(n)}
For $1\le j,\alpha\le n$, let $z_{j\alpha},w_{j\alpha}:\Sp
n\to\cn$ be the matrix coefficients from the standard complex irreducible  representation of $\Sp n$. Then the following relations hold
$$
\tau(z_{j\alpha})= -\frac{2n+1}2\cdot z_{j\alpha},\ \ 
\tau(w_{j\alpha})= -\frac{2n+1}2\cdot w_{j\alpha},
$$
$$
\kappa(z_{j\alpha},z_{k\beta})=-\frac 12\cdot
z_{k\alpha}z_{j\beta},\ \ \kappa(w_{j\alpha},w_{k\beta})=-\frac
12\cdot w_{k\alpha}w_{j\beta},
$$
$$
\kappa(z_{j\alpha},w_{k\beta})=-\frac 12\cdot z_{k\alpha}w_{j\beta}.
$$
\end{lemma}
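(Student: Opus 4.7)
The plan is to derive the identities by specialising the general formulas from Subsection\,\ref{subsection2-general} to the canonical orthonormal basis $\B$ of $\sp{n}$ displayed above.  First I would observe that every $Z\in\B$ is skew-Hermitian since $\sp{n}\subset\u{2n}$; therefore $Z^*=-Z$, the bracket $[Z,Z^*]$ vanishes, and so $\nab ZZ=0$ and
$$
\tau(h)=\sum_{Z\in\B}Z^2(h),\qquad \kappa(h,\tilde h)=\sum_{Z\in\B}Z(h)\,Z(\tilde h).
$$
It will be convenient to let $\tilde z_{j\gamma}:\Sp n\to\cn$ denote the full $(j,\gamma)$-entry of $q\in\Sp n\subset\cn^{2n\times 2n}$, so that $z_{j\alpha}=\tilde z_{j\alpha}$ and $w_{j\alpha}=\tilde z_{j,n+\alpha}$ for $1\le j,\alpha\le n$, and to use
$$
Z(\tilde z_{j\gamma})(q)=(qZ)_{j\gamma},\qquad Z^2(\tilde z_{j\gamma})(q)=(qZ^2)_{j\gamma}.
$$

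For the tension field the task reduces to computing the constant $2n\times 2n$ matrix $C=\sum_{Z\in\B}Z^2$.  Using the elementary identities $Y_{rs}^2=-\tfrac12(E_{rr}+E_{ss})$, $(iX_{rs})^2=-\tfrac12(E_{rr}+E_{ss})$ and $(iD_r)^2=-E_{rr}$, each squared basis element reduces to a block-diagonal matrix whose blocks are multiples of $E_{rr}+E_{ss}$ or $E_{rr}$.  Summation over $r<s$ produces $(n-1)$ copies of each $E_{rr}$ from the three families indexed by pairs $r<s$, while the $n$ families indexed by a single $r$ contribute an additional $-\tfrac32 E_{rr}$ each.  Combining yields $C=-\tfrac{2n+1}{2}\,I_{2n}$, and applying this to the index ranges $\gamma\le n$ and $\gamma>n$ gives the stated eigenvalue identities for $z_{j\alpha}$ and $w_{j\alpha}$.

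For the conformality operator the task reduces to evaluating the rank-four tensor
$$
T_{\gamma\delta;\alpha\beta}=\sum_{Z\in\B}Z_{\gamma\alpha}Z_{\delta\beta}
$$
and contracting against $q_{j\gamma}q_{k\delta}$.  By the $\Sp n$-invariance of the bi-invariant inner product on $\sp{n}$, the tensor $T$ lies in a low-dimensional space of invariants spanned by Kronecker-delta contractions and by entries of the standard symplectic form on $\cn^{2n}$.  A direct computation, again organising the basis by type, expresses $T$ explicitly.  The resulting double sum is then simplified using the constraints $q^*q=I_{2n}$ and $q^t J q=J$ for $q\in\Sp n$, and separates into the three claimed identities after distinguishing the cases in which the indices $\alpha,\beta$ lie in $\{1,\dots,n\}$ or in $\{n+1,\dots,2n\}$.

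The main obstacle is not conceptual but purely computational bookkeeping: the basis $\B$ contains three types of blocks with assorted signs and off-diagonal placements, and the three $\kappa$ identities, covering the index configurations $(z,z)$, $(w,w)$ and $(z,w)$, each require careful tracking of which matrix entries of the basis elements contribute for each range of $(\alpha,\beta)$.  This verification was carried out in Lemma\,6.1 of \cite{Gud-Mon-Rat-1}, and the plan is to follow the same strategy.
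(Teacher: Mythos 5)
The paper does not actually prove this lemma: it is quoted verbatim from Lemma~6.1 of \cite{Gud-Mon-Rat-1}, so there is no internal argument to compare against, and your proposal is an outline of the direct verification that the cited reference performs. The strategy is the right one, and the tension-field half essentially checks out: every $Z\in\B$ is skew-Hermitian, so $[Z,Z^*]=0$ and $\tau(\tilde z_{j\gamma})=\bigl(q\sum_{Z\in\B}Z^2\bigr)_{j\gamma}$, and the constant matrix is indeed $-\tfrac{2n+1}{2}I_{2n}$. One bookkeeping slip in your prose: there are \emph{four} families indexed by pairs $r<s$ (two with diagonal blocks, two with off-diagonal blocks), not three; each contributes $-\tfrac14(E_{rr}+E_{ss})$ per pair in both diagonal blocks, hence $-(n-1)I_{2n}$ in total, and the three singleton families contribute $-\tfrac32 I_{2n}$, which combine to $-\tfrac{2n+1}{2}I_{2n}$ as you state. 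The conformality half, however, is only a plan and not a verification: you correctly identify the ingredients (the tensor $\sum_{Z\in\B}Z_{\gamma\alpha}Z_{\delta\beta}$ is a $\delta_{\gamma\beta}\delta_{\delta\alpha}$-type term plus a multiple of the standard symplectic form, and the latter is disposed of after contraction using $q^tJq=J$; for instance this is exactly what makes $\kappa(z_{j\alpha},w_{k\beta})=-\tfrac12 z_{k\alpha}w_{j\beta}$ come out without extra terms), but none of the three $\kappa$ identities is actually derived, and that is where all the work lies. Since you explicitly fall back on Lemma~6.1 of \cite{Gud-Mon-Rat-1} for this, your treatment matches the paper's (a citation); as a self-contained proof it would still need the $\kappa$ computation carried out.
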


The statement of the next result is a direct consequence of Lemma \ref{lemma-basic-Sp(n)}.

\begin{lemma}
Let $M_Q$ be the following non-zero complex matrix
\begin{equation*}
M_Q=
\begin{bmatrix}
q_{11} & \cdots & q_{1,2n} \\
\vdots & \vdots & \vdots \\
q_{n1} & \cdots & q_{n,2n}
\end{bmatrix}\ \
\end{equation*}
and $Q:\Sp n\to\cn$ be the polynomial function on the quaternionic unitary group given by
$$Q(z,w)=\sum_{k,\beta}(q_{k\beta}z_{k\beta}+q_{k,n+\beta}w_{k\beta}).
$$
Then the equation $Q^2+2\kappa(Q,Q)=0$ is fulfilled if and only if the columns of $M_Q$ are pairwise linearly dependent.
\end{lemma}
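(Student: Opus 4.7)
The plan is to mirror the proof of Lemma \ref{lemma-columns-dependent} (the $\U n$ version). Namely, I would first use the bilinearity of $\kappa$ together with the three identities of Lemma \ref{lemma-basic-Sp(n)} to write
\begin{equation*}
\begin{aligned}
\kappa(Q,Q)=-\tfrac12\sum_{j,k,\alpha,\beta}\Bigl(&q_{j\alpha}q_{k\beta}\,z_{k\alpha}z_{j\beta}+q_{j,n+\alpha}q_{k,n+\beta}\,w_{k\alpha}w_{j\beta}\\
&+2\,q_{j\alpha}q_{k,n+\beta}\,z_{k\alpha}w_{j\beta}\Bigr),
\end{aligned}
\end{equation*}
and expand $Q^2$ directly. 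Relabelling $j\leftrightarrow k$ in the terms coming from $\kappa$, the combination $Q^2+2\kappa(Q,Q)$ should collapse into
\begin{equation*}
\begin{aligned}
Q^2+2\kappa(Q,Q)=\sum_{j,k,\alpha,\beta}\bigl[&(q_{j\alpha}q_{k\beta}-q_{k\alpha}q_{j\beta})\,z_{j\alpha}z_{k\beta}\\
&+(q_{j,n+\alpha}q_{k,n+\beta}-q_{k,n+\alpha}q_{j,n+\beta})\,w_{j\alpha}w_{k\beta}\\
&+2(q_{j\alpha}q_{k,n+\beta}-q_{k\alpha}q_{j,n+\beta})\,z_{j\alpha}w_{k\beta}\bigr].
\end{aligned}
\end{equation*}
Each of the scalar coefficients is a $2\times 2$ minor of the matrix $M_Q$, built from a pair of columns.

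The second step is to read off the equivalence. If the columns of $M_Q$ are pairwise linearly dependent, every $2\times 2$ minor of $M_Q$ vanishes, so each of the three sums above is identically zero and hence $Q^2+2\kappa(Q,Q)=0$. For the converse I would argue that the monomials $z_{j\alpha}z_{k\beta}$, $w_{j\alpha}w_{k\beta}$ and $z_{j\alpha}w_{k\beta}$ occupy three independent families in the coordinate ring of the complex algebraic variety $\SpC n$, so their vanishing sum forces each family's coefficient array to lie in the defining ideal. Since the minor arrays are already antisymmetric in $(j,k)$ (and symmetric under the combined swap $(j,\alpha)\leftrightarrow(k,\beta)$ within a family), comparing with the generators of this ideal forces every minor $q_{j\alpha}q_{k\beta}-q_{k\alpha}q_{j\beta}$ (resp.\ with one or both column indices shifted by $n$) to vanish, which is precisely pairwise linear dependence of the $2n$ columns of $M_Q$. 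Zariski density of $\Sp n$ in $\SpC n$ lets me transport the polynomial identity freely between the real Lie group and the complex variety.

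The main obstacle I foresee is exactly this last algebraic step: on $\Sp n$ the monomials $z_{j\alpha}w_{k\beta}$ are \emph{not} linearly independent, because the symplectic identity $zw^T=wz^T$ imposes the nontrivial relation $\sum_{\gamma}(z_{i\gamma}w_{l\gamma}-z_{l\gamma}w_{i\gamma})=0$ for all $i,l$. The antisymmetry of the minor coefficients in $(j,k)$ should however be compatible with this ideal, so that after a modest bookkeeping argument only the trivially symmetric combinations are killed and the genuine minors survive as independent coefficients. Once that compatibility is verified, the conclusion is immediate from the standard characterization of rank-one matrices by the vanishing of all $2\times2$ minors.
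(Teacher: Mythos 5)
Your proposal follows exactly the route the paper intends: the paper's own proof of this lemma is a single line deferring to the $\U n$ case (Lemma \ref{lemma-columns-dependent}), and your expansion of $Q^2+2\kappa(Q,Q)$ via Lemma \ref{lemma-basic-Sp(n)} into $2\times 2$ minors of $M_Q$ multiplying the monomials $z_{j\alpha}z_{k\beta}$, $w_{j\alpha}w_{k\beta}$, $z_{j\alpha}w_{k\beta}$ is the correct transcription of that argument. If anything you are more careful than the paper: the quadratic relation $zw^t=wz^t$ on $\Sp n$ that you flag in the converse direction is a genuine subtlety the paper passes over in silence, and your observation that these relations only touch the $\delta_{\alpha\beta}$-supported, trace-like part of the cross-term coefficient array (so that the antisymmetric-in-$(j,k)$ minor arrays for distinct column pairs survive as independent coefficients) is the right way to dispose of it.
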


\begin{proof}
The proof is similar to that of Lemma  \ref{lemma-columns-dependent}
\end{proof}

With this at hand, we can now prove the following generalisation of Theorem 6.2 in \cite{Gud-Mon-Rat-1}.

\begin{theorem}\label{theorem-Sp(n)}
Let $a,q\in\cn^{2n}$ be two non-zero vectors, $M_P$ be the following non-zero complex matrix
\begin{equation*}
M_P=
\begin{bmatrix}
p_{11} & \cdots & p_{1,2n} \\
\vdots & \ddots & \vdots   \\
p_{n1} & \cdots & p_{n,2n}
\end{bmatrix}
\end{equation*}
and the functions $P,Q:\Sp n\to\cn$ be given by
$$
P(z,w)=\sum_{j,\alpha}(p_{j\alpha}z_{j\alpha}+p_{j,n+\alpha}w_{j\alpha})
$$
and
$$Q(z,w)=\sum_{k,\beta}(q_{k}a_\beta z_{k\beta}+q_{k+n}a_{n+\beta} w_{k\beta}).
$$	
Further we define the rational function $f=P/Q$ on the open and dense subset $\{(z,w)\in\Sp n|\ Q(z,w)\neq 0\}$ of $\Sp n$. Then we have the following.
\begin{enumerate}
\item[(1)] The function $f$ is harmonic if and only if $PQ+2\,\kappa(P,Q)=0$.  This is equivalent to (i) the vector $q$ and each column vector of the matrix $M_P$ are linearly dependent or (ii) the vector $a$ and the matrix $M_P$ are of the following special form
$$a=[0,\dots,0,a_{\beta_0},0,\dots,0],$$
\begin{equation*}
M_P=
\begin{bmatrix}
0 & \cdots & 0      & p_{1\beta_0}  & 0      & \cdots & 0 \\
\vdots & \vdots & \vdots & \vdots & \vdots & \vdots & 0 \\
0 & \cdots & 0      & p_{n\beta_0}  & 0      & \cdots & 0 \\
\end{bmatrix}.
\end{equation*}
\item[(2)] The function $f$ is proper biharmonic if and only if
$PQ+2\,\kappa(P,Q)\neq 0$ i.e. if and only if neither (i) nor (ii) of (1) is satisfied.  
\end{enumerate}
\end{theorem}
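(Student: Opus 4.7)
The strategy is to mimic, step by step, the proof of Theorem \ref{theorem-SU(n)}, replacing all occurrences of the $\U n$-formulas from Lemma \ref{lemma-U(n)} by the corresponding $\Sp n$-formulas of Lemma \ref{lemma-basic-Sp(n)}. The two essential changes are (a) the common eigenvalue of all matrix coefficients is now $-(2n+1)/2$ instead of $-n$, and (b) the conformality output has an extra factor $1/2$, so that, for the specific $Q$ of the theorem, one has $\kappa(Q,Q)=-\tfrac{1}{2}Q^2$---the $\Sp n$-analogue of the identity $\kappa(Q,Q)=-Q^2$ used in Theorem \ref{theorem-SU(n)}.

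For statement (1), I plug $\kappa(Q,Q)=-\tfrac{1}{2}Q^2$ into equation (\ref{tau-quotient-special}) and find
\[
\tau(f)=-\,\frac{PQ+2\kappa(P,Q)}{Q^2},
\]
which identifies the harmonicity condition. I then translate $PQ+2\kappa(P,Q)=0$ into coordinates by expanding via Lemma \ref{lemma-basic-Sp(n)}, separating the pure $z$-terms, pure $w$-terms and the mixed $zw$-terms. Since the matrix coefficients of the standard representation are linearly independent on $\Sp n$, each coefficient of the resulting polynomial must vanish. This yields relations of the form $(p_{j\alpha}q_k-p_{k\alpha}q_j)a_\beta=0$ together with the analogous mixed identities. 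A short case analysis---either some $a_{\beta_0}\neq 0$, which forces every column of $M_P$ to be a scalar multiple of $q$ (case (i)), or $a$ is supported on a single coordinate, which forces $M_P$ to sit in the corresponding column (case (ii))---reproduces alternatives (i) and (ii) exactly as in the $\SU n$ proof.

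For statement (2), I would follow the direct computation that establishes biharmonicity in the proof of Theorem \ref{theorem-SU(n)}. Specifically, I introduce auxiliary eigenfunctions $R_\alpha,S_\alpha:\Sp n\to\cn$ (with $\alpha$ now ranging over $1,\dots,2n$) built from $p,q,a$ by swapping the roles of rows and columns, so that
\[
\kappa(P,Q)=-\tfrac{1}{2}\sum_{\alpha=1}^{2n}R_\alpha S_\alpha.
\]
Substituting this into the expression for $\tau(f)$, applying the Laplacian a second time via (\ref{equation-basic}) and (\ref{kappa-basic}), and using the $\Sp n$-analogues of $\tau(Q^{-2})=c\cdot Q^{-2}$ and
\[
\kappa\Bigl(\sum_\alpha R_\alpha S_\alpha-PQ,\,Q\Bigr)=-\tfrac{1}{2}\Bigl(Q\sum_\alpha R_\alpha S_\alpha + PQ^2\Bigr),
\]
the same telescoping cancellation as in the $\SU n$ argument produces $\tau^2(f)=0$. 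Equivalently---and perhaps more cleanly---one may recognise $(P,Q,R,S_\alpha)$ as a tuple satisfying the identities (\ref{PP-QQ}) with $\mu=-\tfrac{1}{2}$, and deduce $\tau^2(f)=0$ from Lemma \ref{basic-identities} and Corollary \ref{main-cor} directly.

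The main obstacle is bookkeeping rather than any genuinely new idea: $\Sp n$ carries two species of matrix coefficients, $z_{j\alpha}$ and $w_{j\alpha}$, and a nontrivial mixed conformality $\kappa(z_{j\alpha},w_{k\beta})=-\tfrac{1}{2}\,z_{k\alpha}w_{j\beta}$ with no counterpart on $\U n$. The auxiliary functions $R_\alpha,S_\alpha$ must be chosen so that this cross term is correctly absorbed, and one must keep careful track of the $1/2$ factors throughout so that the second-order cancellation---which in the $\U n$ case depended on the specific numerical pattern of Lemma \ref{lemma-U(n)}---continues to hold.
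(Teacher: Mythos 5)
Your proposal is correct and is precisely the paper's approach: the paper's entire proof of this theorem is the single remark that the statement ``can be proven by exactly the same arguments as that of Theorem \ref{theorem-SU(n)}'', and your adaptation (eigenvalue $-(2n+1)/2$, the extra factor $\tfrac12$ in $\kappa$ giving $\kappa(Q,Q)=-\tfrac12 Q^2$ and hence $\tau(f)=-(PQ+2\kappa(P,Q))Q^{-2}$, the coordinate case analysis for (1), and the auxiliary eigenfunctions $R_\alpha,S_\alpha$ for the second-order cancellation in (2)) is exactly that adaptation, spelled out. The only caveat is your parenthetical ``cleaner'' alternative via the identities (\ref{PP-QQ}) and Corollary \ref{main-cor}: those identities require $\kappa(P,P)=\mu P^2$, which fails for a general matrix $M_P$ (it holds only when the columns of $M_P$ are pairwise linearly dependent), so the direct computation you describe first is the route to keep.
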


\begin{proof}
The statement can be proven by exactly the same arguments as that of Theorem \ref{theorem-SU(n)}.
\end{proof}

In Section\,\ref{the-general-case} we have developed a general scheme for producing complex-valued proper biharmonic functions on certain compact subgroups of the general linear group $\GLC n$.  The next result shows that the quaternionic unitary group $\Sp n$ falls into this scheme.

\begin{lemma}\label{lemma-big-Sp(n)}
Let $a,b,p,q\in\cn^n$ be non-zero elements. Further let the polynomial functions  $P,Q,R,S:\Sp n\to\cn$ satisfying 
$$
PQ+2\,\kappa(P,Q)\neq 0
$$ 
be chosen by one of (\ref{Sp(n)-choice-1}), (\ref{Sp(n)-choice-2}) or (\ref{Sp(n)-choice-3}):	
\begin{equation}	
\begin{aligned}\label{Sp(n)-choice-1}
&
P(z)=\sum_{j}p_{j}a_\alpha z_{j\alpha},\ \ 
Q(z)=\sum_{k}q_{k}b_\beta z_{k\beta},\\	
&
R(z)=\sum_{j}p_{j}b_\beta z_{j\beta},\ \ 
S(z)=\sum_{k}q_{k}a_\alpha z_{k\alpha};
\end{aligned}
\end{equation}
\begin{equation}	
\begin{aligned}\label{Sp(n)-choice-2}
&
P(z)=\sum_{j}p_{j}a_\alpha w_{j\alpha},\ \ 
Q(z)=\sum_{k}q_{k}b_\beta z_{k\beta},\\	
&
R(z)=\sum_{j}p_{j}b_\beta z_{j\beta},\ \ 
S(z)=\sum_{k}q_{k}a_\alpha w_{k\alpha};	
\end{aligned}
\end{equation}
\begin{equation}	
\begin{aligned}\label{Sp(n)-choice-3}
&
P(z)=\sum_{j}p_{j}a_\alpha w_{j\alpha},\ \ 
Q(z)=\sum_{k}q_{k}b_\beta w_{k\beta},\\	
&
R(z)=\sum_{j}p_{j}b_\beta w_{j\beta},\ \ 
S(z)=\sum_{k}q_{k}a_\alpha w_{k\alpha}.		
\end{aligned}
\end{equation}
Then the rational quotient $f=P/Q$ satisfies the conditions given by the equations (\ref{PP-QQ}) with $\mu=-1/2$.
\end{lemma}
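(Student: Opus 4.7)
The plan is to verify the ten identities of (\ref{PP-QQ}) by direct computation, exploiting the uniform shape of the formulas in Lemma \ref{lemma-basic-Sp(n)}. The key observation is that if we let $u$ and $v$ each denote either $z$ or $w$, then $\kappa(u_{j\alpha}, v_{k\beta})$ is always $-\tfrac{1}{2}$ times a \emph{column-swap} of the form $u_{k\alpha} v_{j\beta}$, mirroring exactly the rule $\kappa(z_{j\alpha}, z_{k\beta}) = -z_{k\alpha} z_{j\beta}$ from Lemma \ref{lemma-U(n)} but with $-1$ replaced by $-1/2$. This structural parallel with the $\U n$ case is the reason why $\mu = -1/2$ appears in the conclusion, and it also means that the algebraic manipulations used in the proof of Lemma \ref{basic-un} transfer here essentially verbatim.

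For case (\ref{Sp(n)-choice-1}), only $z$-coefficients appear, so the entire computation of Lemma \ref{basic-un} carries over unchanged apart from the uniform factor of $1/2$; this yields each of the ten identities in (\ref{PP-QQ}) with $\mu = -1/2$. Case (\ref{Sp(n)-choice-3}) is completely analogous with $z$ replaced by $w$ throughout, using the formula $\kappa(w_{j\alpha}, w_{k\beta}) = -\tfrac{1}{2} w_{k\alpha} w_{j\beta}$. For the mixed case (\ref{Sp(n)-choice-2}), the same argument still applies because the cross formula $\kappa(z_{j\alpha}, w_{k\beta}) = -\tfrac{1}{2} z_{k\alpha} w_{j\beta}$ has the identical column-swap shape; one only needs to keep track of which factor is $z$ and which is $w$ when matching the swapped indices back to $P, Q, R, S$.

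The main obstacle I anticipate is purely combinatorial: there are ten identities, three choices of $(P, Q, R, S)$, and the mixed case (\ref{Sp(n)-choice-2}) requires careful bookkeeping to confirm that each column-swapped expression really coincides with one of the prescribed products $PQ$, $RS$, $P R$, $Q S$, etc. The non-vanishing hypothesis $PQ + 2\kappa(P, Q) \neq 0$ plays no role in the verification of (\ref{PP-QQ}); it is only needed when invoking Theorem \ref{theorem-Sp(n)} to conclude that the quotient $f = P/Q$ is genuinely proper biharmonic, which is in turn used to apply the machinery of Section \ref{the-general-case} to $\Sp n$.
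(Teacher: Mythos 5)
Your proposal is correct and takes essentially the same route as the paper, whose proof is the one-line remark that the statement follows by exploiting Lemma \ref{lemma-basic-Sp(n)}; your verification via the uniform column-swap structure $\kappa(u_{j\alpha},v_{k\beta})=-\tfrac12 u_{k\alpha}v_{j\beta}$ is exactly the intended computation, carried out in more detail. Your observations that the mixed case (\ref{Sp(n)-choice-2}) only requires bookkeeping of which factor is $z$ and which is $w$, and that the hypothesis $PQ+2\kappa(P,Q)\neq 0$ is not used in verifying (\ref{PP-QQ}) itself, are both accurate.
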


\begin{proof}
The statement is easily proven by exploiting Lemma \ref{lemma-basic-Sp(n)}.
\end{proof}

The result of Corollary\,\ref{main-cor} implies that of  Theorem\,\ref{main-result} in the case of $G=\Sp n$.


\section{Biharmonic Functions on $\SO n$}\label{section-SO(n)}

In this section we show that the special orthogonal group $\SO n$ falls into the general scheme that we have developed.  This can be applied to construct complex-valued proper biharmonic functions on open and dense subsets of $\SO n$.  They are quotients of homogeneous polynomials in the matrix coefficients of the standard irreducible representation of $\SO n$. 

\medskip

The special orthogonal group $\SO n$ is the compact subgroup of the real general linear group $\GLR n$ given by
$$
\SO{n}=\{x\in\GLR{n}\ |\ x\cdot x^t=I_n,\ \det x =1\}.
$$
The standard representation of the Lie algebra $\so n$ of $\SO n$ is given by the set of skew-symmetric matrices
$$
\so n=\{Y\in\glr n |\ Y+Y^t=0\}
$$
and for this we have the canonical orthonormal basis
$$
\B=\{Y_{rs}|\ 1\le r<s\le n\}.
$$

The following result was established in \cite{Gud-Sak-1}.  It describes the behaviour of the tension field $\tau$ and the conformality operator $\kappa$ on the special orthogonal group $\SO n$. 

\begin{lemma}\label{lemma-basic-SO(n)}
For $1\le j,\alpha\le n$, let $x_{j\alpha}:\SO n\to\rn$ be the real-valued matrix coefficients of the standard representation of $\SO n$.  Then the following relations hold
$$
\tau(x_{j\alpha})=-\frac {(n-1)}2\cdot x_{j\alpha},
$$
$$
\kappa(x_{j\alpha},x_{k\beta})=-\frac 12\cdot (x_{k\alpha}x_{j\beta}-\delta_{kj}\delta_{\alpha\beta}).
$$
\end{lemma}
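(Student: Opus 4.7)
The plan is to apply the general formulas from Section\,\ref{subsection2-general}, in particular (\ref{tau-kappa-alie-groups}), to the canonical orthonormal basis $\B=\{Y_{rs}\mid 1\le r<s\le n\}$ of $\so n$. Since each $Y_{rs}$ is skew-symmetric we have $Y_{rs}^*=-Y_{rs}$, so $[Y_{rs},Y_{rs}^*]=0$ and hence $\nab{Y_{rs}}{Y_{rs}}=0$, exactly as in the unitary case. Consequently the Laplace-Beltrami operator reduces to $\tau(h)=\sum_{Z\in\B}Z^2(h)$.

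First I would compute the first-order derivatives. Since $x_{j\alpha}(p)$ is simply the $(j,\alpha)$-entry of $p\in\SO n$, differentiating $p\exp(sY_{rs})$ at $s=0$ gives
\begin{equation*}
Y_{rs}(x_{j\alpha})=\tfrac{1}{\sqrt 2}\bigl(x_{jr}\,\delta_{s\alpha}-x_{js}\,\delta_{r\alpha}\bigr).
\end{equation*}
Applying $Y_{rs}$ a second time, and using that $r<s$ forces $\delta_{sr}=\delta_{rs}=0$, yields
\begin{equation*}
Y_{rs}^2(x_{j\alpha})=-\tfrac{1}{2}\bigl(x_{js}\,\delta_{s\alpha}+x_{jr}\,\delta_{r\alpha}\bigr).
\end{equation*}
Summing over $1\le r<s\le n$ and counting the nonzero contributions for each fixed $\alpha$ -- namely $\alpha-1$ pairs with $s=\alpha$ and $n-\alpha$ pairs with $r=\alpha$ -- gives the coefficient $-\tfrac{n-1}{2}\,x_{j\alpha}$, which is the first formula.

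Next I would plug the first derivatives into the bilinear sum
\begin{equation*}
\kappa(x_{j\alpha},x_{k\beta})=\tfrac12\sum_{r<s}\bigl(x_{jr}\delta_{s\alpha}-x_{js}\delta_{r\alpha}\bigr)\bigl(x_{kr}\delta_{s\beta}-x_{ks}\delta_{r\beta}\bigr).
\end{equation*}
Expanding yields four terms, each supported on a restricted set of $(r,s)$. The two cross terms survive only when $\{r,s\}=\{\alpha,\beta\}$ with $r<s$; regardless of whether $\alpha<\beta$ or $\beta<\alpha$, they combine to $-\tfrac12\,x_{j\beta}x_{k\alpha}$ whenever $\alpha\ne\beta$. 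The two diagonal terms survive only when $\alpha=\beta$ and contribute $\tfrac12\sum_{r\ne\alpha}x_{jr}x_{kr}$; invoking the orthogonality relation $\sum_{r}x_{jr}x_{kr}=\delta_{jk}$ for the rows of an orthogonal matrix, this becomes $\tfrac12(\delta_{jk}-x_{j\alpha}x_{k\alpha})$.

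Assembling the two cases produces the unified formula $\kappa(x_{j\alpha},x_{k\beta})=-\tfrac12\bigl(x_{k\alpha}x_{j\beta}-\delta_{kj}\delta_{\alpha\beta}\bigr)$. The main obstacle is the careful bookkeeping in the last step: one must verify that the $\alpha=\beta$ and $\alpha\ne\beta$ computations glue together into a single clean expression, and the key nontrivial ingredient making this happen is precisely the orthogonality of the rows of $p\in\SO n$, which is not available for the unitary or symplectic cases and explains the extra $\delta_{kj}\delta_{\alpha\beta}$ correction absent from Lemma\,\ref{lemma-U(n)}.
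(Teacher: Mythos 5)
Your proof is correct: the first- and second-order derivative computations $Y_{rs}(x_{j\alpha})=\tfrac{1}{\sqrt 2}(x_{jr}\delta_{s\alpha}-x_{js}\delta_{r\alpha})$ and $Y_{rs}^2(x_{j\alpha})=-\tfrac12(x_{jr}\delta_{r\alpha}+x_{js}\delta_{s\alpha})$ check out, the counting giving $-\tfrac{n-1}{2}$ is right, and the case analysis for $\kappa$ together with the row-orthogonality relation $\sum_r x_{jr}x_{kr}=\delta_{jk}$ correctly produces the $\delta_{kj}\delta_{\alpha\beta}$ term. The paper itself offers no proof of this lemma, simply citing \cite{Gud-Sak-1}; your argument is the standard direct verification via the formulas of Section\,\ref{subsection2-general} and fills that gap in essentially the expected way.
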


\smallskip

As an immediate comsequence of Lemma \ref{lemma-basic-SO(n)} we have the next useful result.

\begin{lemma}
Let $M_Q$ be the following non-zero complex matrix
\begin{equation*}
M_Q=
\begin{bmatrix}
q_{11} & \cdots & q_{1n} \\
\vdots & \vdots & \vdots \\
q_{n1} & \cdots & q_{nn}
\end{bmatrix}
\end{equation*}
and $Q:\SO n\to\cn$ be the complex-valued polynomial function on the special orthogonal group given by
$$
Q(x)=\sum_{k,\alpha}q_{k\alpha}x_{k\alpha}.
$$
Then the equation $Q^2+2\kappa(Q,Q)=0$ is fulfilled if and only if the columns of $M_Q$ are isotropic and pairwise linearly dependent.
\end{lemma}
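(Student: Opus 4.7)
The plan is to compute $Q^2 + 2\kappa(Q,Q)$ explicitly via Lemma\,\ref{lemma-basic-SO(n)} and then exploit the algebraic structure of the matrix coefficients on $\SO n$. Applying the $\kappa$-formula yields
\[
Q^2 + 2\kappa(Q,Q) = \sum_{j,k,\alpha,\beta} q_{j\alpha}q_{k\beta}(x_{j\alpha}x_{k\beta}-x_{k\alpha}x_{j\beta}) + \sum_{j,\alpha}q_{j\alpha}^2,
\]
the constant term $\sum q_{j\alpha}^2$ coming from the Kronecker deltas in the $\kappa$-formula, which is the essential new feature absent from the corresponding $\U n$ computation of Lemma\,\ref{lemma-columns-dependent}. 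A straightforward symmetrization in $(j,k)$ and $(\alpha,\beta)$ rearranges the quadratic part as $4\sum_{j<k,\,\alpha<\beta}[M_Q]^{jk}_{\alpha\beta}[x]^{jk}_{\alpha\beta}$, where $[M_Q]^{jk}_{\alpha\beta}=q_{j\alpha}q_{k\beta}-q_{j\beta}q_{k\alpha}$ and $[x]^{jk}_{\alpha\beta}$ denote the $2\times 2$ minors with the indicated rows and columns.

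The key observation is that the minors $[x]^{jk}_{\alpha\beta}$ are matrix coefficients of the exterior-square representation $\Lambda^2\cn^n$ of $\SO n$, which contains no trivial summand. By Peter--Weyl, any finite linear combination of these matrix coefficients is therefore orthogonal to constants, so the identity $Q^2+2\kappa(Q,Q)=0$ on $\SO n$ decouples into the two independent conditions
\[
\sum_{j<k,\,\alpha<\beta}[M_Q]^{jk}_{\alpha\beta}[x]^{jk}_{\alpha\beta} = 0 \ \text{on}\ \SO n, \qquad \sum_{j,\alpha}q_{j\alpha}^2 = 0.
\]
For $n\geq 5$ the representation $\Lambda^2\cn^n$ is irreducible, so the $[x]^{jk}_{\alpha\beta}$ are linearly independent on $\SO n$ and the first identity forces every $2\times 2$ minor of $M_Q$ to vanish. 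In the exceptional case $n=4$ the splitting $\Lambda^2\cn^4=\Lambda^2_+\oplus\Lambda^2_-$ induces relations among the $[x]^{jk}_{\alpha\beta}$ coming from the cofactor identity $\det(x)=1$, and a short additional inspection of these relations still yields the vanishing of every $2\times 2$ minor of $M_Q$.

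Hence $M_Q$ has rank at most one, so its columns are pairwise linearly dependent. Writing $M_Q = c\,a^T$ with nonzero $c,a\in\cn^n$, the constant condition becomes $(\sum_j c_j^2)(\sum_\alpha a_\alpha^2)=0$, and combined with the rank-one structure this is precisely the statement that every column of $M_Q$ is isotropic. The converse direction is immediate: under the stated assumptions one writes $M_Q = c\,a^T$ with $c$ isotropic, and the displayed formula for $Q^2+2\kappa(Q,Q)$ reduces to $(\sum c_j^2)(\sum a_\alpha^2)=0$. I expect the main obstacle to be the delicate $n=4$ case, in which the reducibility of $\Lambda^2\cn^4$ requires a careful separate bookkeeping of the cofactor relations in order to conclude that $M_Q$ has rank at most one.
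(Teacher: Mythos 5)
Your opening computation is, word for word, the entirety of the paper's own proof (the paper displays the identity
$Q^2+2\kappa(Q,Q)=\sum_{j,k,\alpha,\beta}(q_{j\alpha}q_{k\beta}-q_{k\alpha}q_{j\beta})x_{j\alpha}x_{k\beta}+\sum_{j,\alpha}q_{j\alpha}^2$ and declares that the statement "follows easily"), and your Peter--Weyl separation of the constant term together with the linear independence of the matrix coefficients of the irreducible representation $\Lambda^2\cn^n$ is a correct way to justify the quadratic part for $n\ge 5$ (the symmetrization constant is $2$, not $4$, but that is harmless). However, two of your steps genuinely fail. First, the $n=4$ case: since $\Lambda^2 x$ preserves the splitting $\Lambda^2_+\oplus\Lambda^2_-$ into inequivalent irreducibles and $\Lambda^2_+\perp\Lambda^2_-$, the vanishing of $\sum_{I,J}[M_Q]^I_J[x]^I_J$ on $\SO 4$ only kills the $(+,+)$ and $(-,-)$ blocks of the second compound of $M_Q$; it does \emph{not} force all $2\times2$ minors to vanish. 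Concretely, take $u=e_1+ie_4$, $w=z=e_2+ie_3$, $v=e_1-ie_4$ and $M_Q=uv^t+wz^t$, i.e.
\begin{equation*}
M_Q=\begin{bmatrix}1&0&0&-i\\0&1&i&0\\0&i&-1&0\\i&0&0&1\end{bmatrix}.
\end{equation*}
Here $u\wedge w\in\Lambda^2_+$ and $v\wedge z\in\Lambda^2_-$ are both decomposable, so $[M_Q]^I_J=(u\wedge w)_I(v\wedge z)_J$ and the quadratic part equals $2\langle u\wedge w,\Lambda^2(x)(v\wedge z)\rangle\equiv 0$, while $\sum_{j,\alpha}q_{j\alpha}^2=0$; yet $M_Q$ has rank two, so its columns are not pairwise linearly dependent. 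No "short additional inspection" can rescue your claim, because the "only if" direction of the lemma is actually false at $n=4$ --- which is in scope, since the paper uses $\SO n$ precisely for $n\ge 4$.

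Second, even for $n\ge 5$ your last step is off: with $M_Q=c\,a^T$ the constant condition reads $(\sum_j c_j^2)(\sum_\alpha a_\alpha^2)=0$, which says that the columns are isotropic \emph{or} the rows are isotropic, not that the columns are. For instance $M_Q=c\,a^T$ with $(a,a)=0$ and $(c,c)\neq 0$ satisfies $Q^2+2\kappa(Q,Q)=(c,c)(a,a)=0$ while every nonzero column is non-isotropic, so the equivalence you assert (and the lemma as literally stated) does not hold. This either/or dichotomy is in fact what the paper implicitly uses later, in the hypothesis "$(a,a)=(b,b)=(a,b)=0$ or $(p,p)=(p,q)=(q,q)=0$" of Lemma~\ref{lemma-big-SO(n)}. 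Your "if" direction is fine; the two points above are the places where the argument would need to be repaired (and where the statement itself needs a restriction on $n$ and a corrected conclusion).
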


\begin{proof}
The statement follows easily from the fact that
$$
Q^2+2\kappa(Q,Q)=\sum_{j,k,\alpha,\beta}
(q_{j\alpha}q_{k\beta}-q_{k\alpha}q_{j\beta})	x_{j\alpha}x_{k\beta}+\sum_{j,\alpha}q_{j\alpha}^2=0.
$$
\end{proof}

The next theorem shows how the standard representation of the special orthogonal group $\SO n$ can be employed to construct proper biharmonic functions.  It generalises the result of Theorem 5.2 of \cite{Gud-Mon-Rat-1}.

\begin{theorem}\label{theorem-SO(n)}
Let $a,q\in\cn^n$ be two non-zero vectors such that $(a,a )\neq 0$, $M_P$ the following non-zero complex matrix
\begin{equation*}
	M_P=
	\begin{bmatrix}
		p_{11} & \cdots & p_{1n}\\
		\vdots & \ddots & \vdots\\
		p_{n1} & \cdots & p_{nn}
	\end{bmatrix}
\end{equation*}
and the polynomial functions $P,Q:\SO n\to\cn$ be given by
$$P(x)=\sum_{j,\alpha}p_{j\alpha}x_{j\alpha}\ \ \text{and}\ \ Q(x)=\sum_{k,\beta}q_{k}a_\beta x_{k\beta}.$$
Further we define the rational function $f=P/Q$ on the open and dense subset $\{x\in\SO n|\ Q(x)\neq 0\}$ of $\SO n$.  Then we have the following.
\begin{enumerate}
\item[(1)] 
The function $f$ is harmonic if and only if $PQ+2\,\kappa(P,Q)=0$.  This is equivalent to (i) the vector $q$ and each column vector of the matrix $M_P$ are linearly dependent or (ii) the vector $a$ and the matrix $M_P$ are of the following special form
$$a=[0,\dots,0,a_{\beta_0},0,\dots,0],$$
\begin{equation*}
M_P=
\begin{bmatrix}
0 & \cdots & 0      & p_{1\beta_0}  & 0      & \cdots & 0 \\
\vdots & \vdots & \vdots & \vdots & \vdots & \vdots & 0 \\
0 & \cdots & 0      & p_{n\beta_0}  & 0      & \cdots & 0 \\
\end{bmatrix}.
\end{equation*}
\item[(2)] 
The function $f$ is proper biharmonic if and only if
	$PQ+2\,\kappa(P,Q)\neq 0$ i.e. if and only if neither (i) nor (ii) of (1) is satisfied.  
\end{enumerate}	
\end{theorem}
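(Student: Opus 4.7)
The proof will follow the same pattern as that of Theorem \ref{theorem-SU(n)}, with Lemma \ref{lemma-U(n)} replaced by its $\SO n$ analogue, Lemma \ref{lemma-basic-SO(n)}. The new ingredient is the additional $\delta_{kj}\delta_{\alpha\beta}$-term appearing in $\kappa(x_{j\alpha},x_{k\beta})$, and the role of the hypothesis $(a,a)\neq 0$ is the following: combined with the standing condition $Q^{2}+2\kappa(Q,Q)=0$ from the preceding lemma (which, for the rank-one $M_Q=q\,a^{t}$, amounts to $(q,q)(a,a)=0$), it forces $(q,q)=0$, and hence $\kappa(Q,Q)=-\tfrac12\,Q^{2}$.

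For part (1), I would apply (\ref{tau-quotient-special}) to $f=P/Q$, noting that both $P$ and $Q$ are $\tau$-eigenfunctions with common eigenvalue $-(n-1)/2$. After substituting $\kappa(Q,Q)=-\tfrac12\,Q^{2}$ this collapses to
\[
Q^{2}\,\tau(f)\;=\;-\bigl(PQ+2\kappa(P,Q)\bigr),
\]
so $\tau(f)=0$ if and only if $PQ+2\kappa(P,Q)=0$. Expanding this polynomial identity via Lemma \ref{lemma-basic-SO(n)} yields the familiar cross term $\sum(p_{j\alpha}q_{k}-p_{k\alpha}q_{j})a_{\beta}\,x_{j\alpha}x_{k\beta}$ together with a scalar correction $\sum_{j,\alpha}p_{j\alpha}q_{j}a_{\alpha}$ produced by the $\delta$-terms; the same case analysis as in the proof of Theorem \ref{theorem-SU(n)} then shows that either the columns of $M_P$ are pairwise proportional to $q$, or $a$ and $M_P$ take the degenerate shape (ii).

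For part (2), I would introduce auxiliary polynomial functions $R_{\alpha},S_{\alpha}:\SO n\to\cn$ modelled on those of Theorem \ref{theorem-SU(n)}, namely $R_{\alpha}(x)=\sum_{j,\beta}p_{j\alpha}a_{\beta}x_{j\beta}$ and $S_{\alpha}(x)=\sum_{k}q_{k}x_{k\alpha}$, and rewrite $\tau(f)$ in terms of $\sum_{\alpha}R_{\alpha}S_{\alpha}$, $PQ$, a scalar constant, and $Q^{-2}$. Applying (\ref{equation-basic}) and (\ref{kappa-basic}) iteratively, together with an eigenvalue identity of the form $\tau(Q^{-2})=(n-4)\,Q^{-2}$ (which takes the place of $\tau(Q^{-2})=2(n-3)\,Q^{-2}$ from the $\SU n$ proof), I would then compute $\tau^{2}(f)$ and verify that all contributions cancel whenever $PQ+2\kappa(P,Q)\neq 0$.

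The main obstacle will be the bookkeeping forced by the $\delta$-terms: each application of $\kappa$ now produces a constant correction, and these corrections accumulate when iterating $\tau$. The plan is to track them using the isotropy relation $(q,q)=0$ and the defining relations $x\cdot x^{t}=I_{n}$ of $\SO n$, which together ensure that the corrections cancel and reduce the final calculation to one essentially identical to the $\SU n$ case.
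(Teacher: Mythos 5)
Your overall plan coincides with the paper's: the paper's proof of Theorem \ref{theorem-SO(n)} is literally the one sentence that the statement follows ``by exactly the same arguments'' as Theorem \ref{theorem-SU(n)}, and your preliminary adaptations are correct. In particular $\tau(Q^{-2})=(n-4)Q^{-2}$ is right, and so is the observation that the standing condition $Q^{2}+2\kappa(Q,Q)=(q,q)(a,a)=0$ together with $(a,a)\neq 0$ forces $(q,q)=0$ and hence $\kappa(Q,Q)=-\tfrac12 Q^{2}$.

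The gap is in your final step, where you assert that the constant corrections produced by the $\delta$-terms ``cancel''. They do not, and this is exactly where the adaptation ceases to be routine. With $R_\alpha$, $S_\alpha$ as you define them one finds $\kappa(P,Q)=-\tfrac12\sum_\alpha R_\alpha S_\alpha+\tfrac12 c$ with $c=\sum_{j,\alpha}p_{j\alpha}q_{j}a_{\alpha}$, and, writing $A=\sum_\alpha R_\alpha S_\alpha-PQ$, one gets $\tau(A)=-(n-2)A$ and $\kappa(A,Q)=-\tfrac12 QA-\tfrac12 cQ+\tfrac12 (a,a)\,T$, where $T=\sum_{\alpha,k}\bigl(\sum_{j}p_{j\alpha}q_{j}\bigr)q_{k}x_{k\alpha}$. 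Assembling $\tau^{2}(f)$, the $A$-terms cancel exactly as in the unitary case, but the residue is
\[
\tau^{2}(f)=(6-n)\,c\,Q^{-2}-2\,(a,a)\,T\,Q^{-3},
\]
which (for $n\neq 4$, using $(a,a)\neq 0$ and the linear independence of the $x_{k\alpha}$) vanishes only when $\sum_{j}p_{j\alpha}q_{j}=0$ for every $\alpha$, i.e. $M_P^{t}q=0$. So biharmonicity requires an orthogonality hypothesis that neither your argument nor the theorem statement supplies; compare Lemma \ref{lemma-big-SO(n)}, where exactly such conditions, e.g. $(p,q)=(q,q)=0$, are imposed. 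The same constant obstructs case (ii) of part (1): there $PQ+2\kappa(P,Q)$ reduces to the constant $a_{\beta_{0}}\sum_{j}p_{j\beta_{0}}q_{j}$, which need not be zero. Finally, for the ``only if'' direction of (1) you cannot simply compare coefficients of the monomials $x_{j\alpha}x_{k\beta}$ as in the unitary case, since on $\SO n$ these are linearly dependent (e.g. $\sum_{\gamma}x_{j\gamma}x_{k\gamma}=\delta_{jk}$); this requires a separate justification. Your proposal therefore needs either the additional orthogonality hypotheses or a substantially more careful bookkeeping than the claim that the corrections cancel.
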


\begin{proof}
The statement can be proven by exactly the same arguments as that of Theorem \ref{theorem-SU(n)}.
\end{proof}

In Section\,\ref{the-general-case} we have developed a general scheme for producing complex-valued proper biharmonic functions on certain compact subgroups of the general linear group $\GLC n$.  The next result shows that the special orthogonal group $\SO n$ fits into this scheme.

\begin{lemma}\label{lemma-big-SO(n)}
Let $n\ge 4$ and $a,b,p,q\in\cn^n$ be non-zero elements such that either $(a,a)=(b,b)=(a,b)=0$ or  $(p,p)=(p,q)=(q,q)=0$.  Further let the polynomial functions $P,Q,R,S:\SO n\to\cn$ satisfying 
$$
PQ+2\,\kappa(P,Q)\neq 0
$$ 
be given by
\begin{equation*}
\begin{aligned}
&
P(x)=\sum_{j,\alpha}p_{j}a_\alpha x_{j\alpha},\ \ 
Q(x)=\sum_{k,\beta }q_{k}b_\beta x_{k\beta },\\
&
R(x)=\sum_{j,\beta }p_{j} b_\beta x_{j\beta },\ \ 
S(x)=\sum_{k,\alpha}q_{k}a_\alpha x_{k\alpha}.
\end{aligned}
\end{equation*}
Then the rational quotient $f=P/Q$ satisfies the conditions given by the equations (\ref{PP-QQ}) with $\mu=-1/2$.
\end{lemma}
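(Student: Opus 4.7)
The plan is to verify each of the ten identities in \eqref{PP-QQ} by direct computation using Lemma \ref{lemma-basic-SO(n)}. First, since every matrix coefficient $x_{j\alpha}$ is an eigenfunction of $\tau$ with eigenvalue $-(n-1)/2$, each of the linear combinations $P$, $Q$, $R$, $S$ is automatically an eigenfunction of $\tau$ with that same eigenvalue, so the hypothesis on $\tau$ is immediate; all the work lies in $\kappa$.

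The template I would follow is to substitute $\kappa(x_{j\alpha},x_{k\beta}) = -\tfrac{1}{2}(x_{k\alpha}x_{j\beta} - \delta_{jk}\delta_{\alpha\beta})$ into the bilinear extension of $\kappa$ for each pair drawn from $\{P,Q,R,S\}$. The sum splits into two pieces. The $x_{k\alpha}x_{j\beta}$ piece, after a suitable relabelling of the summation indices, factors cleanly into a product of two of the functions $P,Q,R,S$, giving precisely the ``main'' term predicted by \eqref{PP-QQ} with coefficient $\mu=-\tfrac{1}{2}$; the rule is determined by how the outer indices $j,k$ and inner indices $\alpha,\beta$ get paired off against the vectors $p,q,a,b$. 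The $\delta$ piece contributes a scalar ``correction'' of the form $\tfrac{1}{2}\ip{u}{v}\ip{c}{d}$, where $u,v\in\{p,q\}$ and $c,d\in\{a,b\}$, again determined by the pairing.

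Carrying this out gives, for instance, $\kappa(P,P) = -\tfrac{1}{2}(P^{2} - \ip{p}{p}\ip{a}{a})$, $\kappa(Q,Q) = -\tfrac{1}{2}(Q^{2} - \ip{q}{q}\ip{b}{b})$, $\kappa(P,Q) = -\tfrac{1}{2}(RS - \ip{p}{q}\ip{a}{b})$, $\kappa(R,S) = -\tfrac{1}{2}(PQ - \ip{p}{q}\ip{a}{b})$, $\kappa(Q,R) = -\tfrac{1}{2}(QR - \ip{p}{q}\ip{b}{b})$, and similarly for the remaining four identities. Across all ten computations, every correction factor lies in the set $\{\ip{p}{p},\ip{p}{q},\ip{q}{q}\}\cdot\{\ip{a}{a},\ip{a}{b},\ip{b}{b}\}$, i.e., it is a product of one inner product drawn from the $\{a,b\}$-side and one from the $\{p,q\}$-side.

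This structural observation finishes the proof: under either hypothesis $\ip{a}{a}=\ip{b}{b}=\ip{a}{b}=0$ or $\ip{p}{p}=\ip{p}{q}=\ip{q}{q}=0$, at least one factor in every such product vanishes, so all corrections die and the identities \eqref{PP-QQ} hold with $\mu=-\tfrac{1}{2}$. The only real obstacle is the bookkeeping: one must keep careful track of which relabelling of summation indices turns the generic $x_{k\alpha}x_{j\beta}$ sum into the correct product (e.g.\ $\kappa(P,Q)$ producing $RS$ rather than $PQ$, $\kappa(R,S)$ producing $PQ$, etc.), and then check that all nine inner-product pairs listed above actually appear.
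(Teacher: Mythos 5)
Your proposal is correct and follows the same route as the paper, whose proof is simply the one-line remark that the statement follows from Lemma \ref{lemma-basic-SO(n)}; you have filled in exactly the computation that remark alludes to. Your key structural observation --- that every $\delta$-correction is a product of one inner product from $\{(p,p),(p,q),(q,q)\}$ and one from $\{(a,a),(a,b),(b,b)\}$, so either isotropy hypothesis kills all of them --- is precisely why the lemma needs the stated hypotheses and why $\mu=-1/2$ emerges.
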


\begin{proof}
	The statement is easily proven by exploiting Lemma \ref{lemma-basic-SO(n)}.
\end{proof}

The result of Corollary\,\ref{main-cor} implies that of  Theorem\,\ref{main-result} in the case of $G=\SO n$.


\section{Harmonic Morphisms}\label{harmonic-morphisms}

In this section we manufacture new eigenfamilies of complex-valued functions defined on open and dense subsets of the special unitary group $\SU n$, $\Sp n$ and $\SO n$. Their elements are ingredients for a recipe of harmonic morphisms, as we now will describe. 
We just carry out the considerations for the group $\U n$, the proof for the other two cases are the same.

\medskip

The following notion of an eigenfamily was introduced in the paper \cite{Gud-Sak-1}.

\begin{definition}\label{defi:eigen}
Let $(M,g)$ be a Riemannian manifold.  Then a set
$$\E =\{\phi_i:M\to\cn\ |\ i\in I\}$$ of complex-valued functions is called an {\it eigenfamily} on $M$ if there exist complex numbers $\lambda,\mu\in\cn$ such that
$$
\tau(\phi)=\lambda\,\phi\ \ \text{and}\ \ \kappa(\phi,\psi)=\mu\,\phi\,\psi,
$$ 
for all $\phi,\psi\in\E$. A set
$$
\Omega =\{\phi_i:M\to\cn\ |\ i\in I\}
$$ 
is called an {\it orthogonal harmonic family} on $M$ if  for all $\phi,\psi\in\Omega$ $$\tau(\phi)=0\ \ \text{and}\ \ \kappa(\phi,\psi)=0.$$
\end{definition}

The reader should note that that every element of an orthogonal harmonic family is a harmonic morphism since it is both harmonic and horizontally conformal.

\smallskip

Below let $\beta\in\{1,\dots,n\}$ be fixed.
Let $P_j,Q:\U n\to\cn$  be given as in Section\,\ref{section-new-U(n)-II}, that is
$$
P_j=\sum_{k=1}^np_k a_{j}z_{kj}\ \ \text{and}\ \  Q:=Q_\beta=\sum_{k=1}^nq_k b_{\beta}z_{k\beta}.
$$
Further let $f_{j}:W\to\cn$ be the quotient $f_{j}=P_j/Q$ defined on the open and dense subset
$$
W=\{z\in\U n|\ Q(z)\neq 0\}
$$
of the unitary group $\U n$. According to Theorem \ref {theorem-SU(n)} the functions $f_{j}$ are harmonic and proper biharmonic if and only if $j\neq\beta$.

\begin{proposition}\label{proposition-eigenfamily-U(n)}
Let $k\in\mathbb{N}^+$ and $\E_k$ be the following set of complex-valued functions, defined on the open and dense subset $W$ of $\U n$,
$$
\E_k=\{\tau(f_{j})^k:W\to\cn|\ \alpha\neq\beta\}.
$$
Then $\E_k$ is an eigenfamily on $W$.  The corresponding statement holds for the induced family on the special unitary group $\SU n$.
\end{proposition}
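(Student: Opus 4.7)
The plan is to invoke the already-established pointwise identities of Lemma~\ref{basic-identities}, applied to the specific functions $P_j,Q$ (together with the auxiliary $R,S_j$) defined on $\U n$ at the end of Section~\ref{the-general-case}. That discussion already verifies that these functions satisfy the conditions (\ref{PP-QQ}) with $\mu=-1$, so the identities of Lemma~\ref{basic-identities} are available without any further computation.

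First I would check the tension-field condition: from part~(5) of Lemma~\ref{basic-identities}, applied with $\mu=-1$, we get
\begin{equation*}
\tau(\tau(f_j)^k)=2\mu\,k(k-1)\,\tau(f_j)^k=-2k(k-1)\,\tau(f_j)^k
\end{equation*}
for each $j\neq\beta$. Thus every element of $\E_k$ is an eigenfunction of the Laplace--Beltrami operator with the common eigenvalue $\lambda_k=-2k(k-1)$. Next, from part~(3) of the same lemma, again with $\mu=-1$, we obtain
\begin{equation*}
\kappa(\tau(f_i)^k,\tau(f_j)^k)=2\mu\,k^2\,\tau(f_i)^k\tau(f_j)^k=-2k^2\,\tau(f_i)^k\tau(f_j)^k
\end{equation*}
for all $i,j\in\{1,\dots,n\}\setminus\{\beta\}$. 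Hence $\kappa(\phi,\psi)=\mu_k\,\phi\,\psi$ for all $\phi,\psi\in\E_k$ with $\mu_k=-2k^2$, and consequently $\E_k$ is an eigenfamily on $W$ in the sense of Definition~\ref{defi:eigen}.

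For the corresponding statement on $\SU n$, I would use the fact noted just before the proposition (and proved earlier in the paper) that the natural projection $\pi:\U n\to\SU n$ is a harmonic morphism with constant dilation $\lambda\equiv 1$. Because the horizontal conformality factor is identically one, the tension field and the conformality operator of any complex-valued function locally defined on $\SU n$ pull back to the corresponding quantities on $\U n$. Since each $\tau(f_j)^k$ on $W$ is $\pi$-related to an analogous function on $\pi(W)\subset\SU n$, the eigenvalues $\lambda_k$ and $\mu_k$ descend, and the induced family is again an eigenfamily.

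There is really no major obstacle here: the statement is essentially a direct reading of Lemma~\ref{basic-identities} once one observes that the specific $P_j,Q,R,S_j$ satisfy (\ref{PP-QQ}). The only small point of care is checking that the constant $\mu$ appearing in (\ref{PP-QQ}) (here equal to $-1$) is correctly propagated through items (3) and (5) of that lemma to yield $\lambda_k$ and $\mu_k$ in the eigenfamily sense, and then transferring the conclusion to $\SU n$ via the harmonic-morphism property of $\pi$.
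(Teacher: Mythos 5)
Your proof is correct and follows essentially the same route as the paper, whose entire proof is the one-line observation that the claim is an immediate consequence of Lemma~\ref{basic-identities}; you simply make explicit the eigenvalues $\lambda_k=-2k(k-1)$ and $\mu_k=-2k^2$ obtained from parts (5) and (3) with $\mu=-1$, and the descent to $\SU n$ via the dilation-one projection, both of which are exactly the intended reading.
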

\begin{proof}
This is an immediate consequence of Lemma\,\ref{basic-identities}.
\end{proof}

The next result shows how the eigenfamily $\E$ in Proposition  \ref{proposition-eigenfamily-U(n)} produces a large collection of harmonic morphisms on the special unitary group $\SU n$.

\begin{theorem}\cite{Gud-Sak-1}\label{theo:rational}
Let $(M,g)$ be a Riemannian manifold and 
$$
\E =\{\phi_1,\dots,\phi_n\}
$$ 
be a finite eigenfamily of complex valued functions on $M$. If $P,Q:\cn^n\to\cn$ are linearily independent homogeneous polynomials of the same positive degree then the quotient
$$
\frac{P(\phi_1,\dots ,\phi_n)}{Q(\phi_1,\dots ,\phi_n)}
$$ 
is a non-constant harmonic morphism on the open and dense subset
$$
\{p\in M| \ Q(\phi_1(p),\dots ,\phi_n(p))\neq 0\}.
$$
\end{theorem}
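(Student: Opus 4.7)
The plan is to verify directly the two defining conditions of a harmonic morphism for $f = P(\phi_1,\ldots,\phi_n)/Q(\phi_1,\ldots,\phi_n)$, namely $\tau(f) = 0$ and $\kappa(f,f) = 0$. The strategy is to use the quotient identities (\ref{kappa-quotient}) and (\ref{tau-quotient}), applied to the numerator $P := P(\phi_1,\ldots,\phi_n)$ and denominator $Q := Q(\phi_1,\ldots,\phi_n)$, and to reduce everything to ``eigen-type'' identities for homogeneous polynomial expressions in the $\phi_i$.

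The key intermediate step I would establish is the following closure property of eigenfamilies: for any homogeneous polynomial $R$ in $n$ variables of degree $d$, the function $R(\phi_1,\ldots,\phi_n)$ is again an eigenfunction of $\tau$ with eigenvalue $d\lambda + d(d-1)\mu$, and for two homogeneous polynomials $R_1,R_2$ of degrees $d_1,d_2$,
\begin{equation*}
\kappa(R_1(\phi_1,\ldots,\phi_n), R_2(\phi_1,\ldots,\phi_n))
= d_1 d_2\,\mu\,R_1(\phi_1,\ldots,\phi_n)\,R_2(\phi_1,\ldots,\phi_n).
\end{equation*}
By linearity of $\tau$ and bilinearity of $\kappa$, it suffices to verify these on monomials $\phi^a := \phi_1^{a_1}\cdots\phi_n^{a_n}$. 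I would prove this by induction on the total degree $|a|=\sum a_i$, using only the product rule (\ref{equation-basic}) for $\tau$ and the Leibniz rule $\kappa(fg,h)=f\kappa(g,h)+g\kappa(f,h)$ for $\kappa$, together with the defining eigenfamily relations $\tau(\phi_i)=\lambda\phi_i$ and $\kappa(\phi_i,\phi_j)=\mu\phi_i\phi_j$. The inductive step, applied with the splitting $\phi^a = \phi_i\cdot\phi^{a-e_i}$, yields the clean identity $\tau(\phi^a) = (\lambda|a|+\mu|a|(|a|-1))\phi^a$ and $\kappa(\phi^a,\phi^b)=\mu|a||b|\,\phi^{a+b}$.

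Given this claim, the theorem follows by substitution. Since $\deg P = \deg Q = d$, both $P$ and $Q$ are eigenfunctions of $\tau$ with the same eigenvalue $d\lambda + d(d-1)\mu$, and all three of $\kappa(P,P)$, $\kappa(P,Q)$, $\kappa(Q,Q)$ are equal to $d^2\mu$ times $P^2$, $PQ$, $Q^2$, respectively. Plugging these into (\ref{kappa-quotient}) gives
\begin{equation*}
Q^4\kappa(f,f) = d^2\mu\,P^2Q^2\,(1-2+1) = 0,
\end{equation*}
while plugging into (\ref{tau-quotient}) makes the eigenvalue contributions from $\tau(P)$ and $\tau(Q)$ cancel against the two $\kappa$ terms, giving $Q^3\tau(f)=0$. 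Hence $f$ is both harmonic and horizontally conformal on the open dense set where $Q\neq 0$.

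The main obstacle, and where I would expect the proof to actually do some work, is the inductive verification of the two monomial identities above; once those are in hand, the whole construction collapses via the miraculous algebraic cancellation $1-2+1=0$ in (\ref{kappa-quotient}). The non-constancy of $f$ is a separate mild point: if $f$ were identically $c$, then $P - cQ$ would vanish as a function on $M$, and linear independence of $P$ and $Q$ as polynomials then contradicts this provided the degree-$d$ monomials in $\phi_1,\ldots,\phi_n$ are linearly independent as functions, which is the standing genericity assumption implicit in the theorem's setting.
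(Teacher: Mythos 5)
The paper itself gives no proof of this theorem --- it is quoted from \cite{Gud-Sak-1} --- and your argument is correct and follows essentially the same route as that reference: one first shows that the degree-$d$ homogeneous polynomials in an eigenfamily again form an eigenfamily, with $\tau$-eigenvalue $d\lambda+d(d-1)\mu$ and $\kappa$-eigenvalue $d^{2}\mu$, after which the quotient identities (\ref{kappa-quotient}) and (\ref{tau-quotient}) collapse exactly as you compute. Your caveat about non-constancy is well taken, but it reflects an imprecision already present in the quoted statement rather than a gap you have introduced: linear independence of $P$ and $Q$ as polynomials does not by itself preclude $P(\phi_1,\dots,\phi_n)\equiv c\,Q(\phi_1,\dots,\phi_n)$ when the $\phi_i$ satisfy an algebraic relation, so some genericity of the eigenfamily must indeed be assumed.
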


\smallskip

We now discuss the special case when the eigenfamily is both orthogonal and harmonic.

\begin{proposition}\label{proposition-orthogonal-family}
Let $q\in\cn^n$ be non-zero and define the complex-valued polynomial functions $Q_\alpha:\U n\to\cn$ by 
$$
Q_\alpha(z)=\sum_{j}q_{j}z_{j\alpha}.
$$
Then the collection 
$$
\Omega=\{{Q_\alpha}/{Q_\beta}|\ \alpha\neq\beta\}
$$
is a harmonic orthogonal family on the following open subset of the unitary group $\U n$
$$
W=\bigcap_{\alpha =1}^n\{z\in\U n|\ Q_\alpha (z)\neq 0\}.
$$
The corresponding statement holds for the induced family on the special unitary group $\SU n$.
\end{proposition}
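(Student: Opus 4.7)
The strategy is to reduce everything to the two key identities
\[
\tau(Q_\alpha) = -n\, Q_\alpha, \qquad \kappa(Q_\alpha, Q_\gamma) = -\,Q_\alpha Q_\gamma
\]
for all indices $\alpha,\gamma\in\{1,\dots,n\}$, and then to plug these into the quotient formulas. The first identity is immediate from the bilinearity of $\tau$ and Lemma \ref{lemma-U(n)}. For the second, Lemma \ref{lemma-U(n)} together with bilinearity of $\kappa$ gives
\[
\kappa(Q_\alpha, Q_\gamma)
= \sum_{j,k} q_j q_k\, \kappa(z_{j\alpha}, z_{k\gamma})
= -\sum_{j,k} q_j q_k\, z_{k\alpha} z_{j\gamma}
= -\Bigl(\sum_{k} q_k z_{k\alpha}\Bigr)\Bigl(\sum_{j} q_j z_{j\gamma}\Bigr) = -\,Q_\alpha Q_\gamma,
\]
where the crucial step is recognising the double sum as a product, which works precisely because the coefficient vector $q$ does not depend on the column index.

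Harmonicity of each $\phi = Q_\alpha/Q_\beta \in \Omega$ then follows directly from equation (\ref{tau-quotient}) with $P=Q_\alpha$, $Q=Q_\beta$: the four terms become $-n Q_\alpha Q_\beta^2$, $+2 Q_\alpha Q_\beta^2$, $-2 Q_\alpha Q_\beta^2$ and $+n Q_\alpha Q_\beta^2$, which sum to zero. For orthogonality of any two elements $\phi_i = Q_{\alpha_i}/Q_{\beta_i}$, I would use the Leibniz rule for $\kappa$ on quotients (either derived ad hoc from $\nabla(P/Q)=(Q\nabla P - P \nabla Q)/Q^2$, or iterated from (\ref{kappa-basic})) to obtain
\[
Q_{\beta_1}^2 Q_{\beta_2}^2\, \kappa(\phi_1,\phi_2)
= Q_{\beta_1}Q_{\beta_2}\kappa(Q_{\alpha_1},Q_{\alpha_2}) - Q_{\beta_1}Q_{\alpha_2}\kappa(Q_{\alpha_1},Q_{\beta_2}) - Q_{\alpha_1}Q_{\beta_2}\kappa(Q_{\beta_1},Q_{\alpha_2}) + Q_{\alpha_1}Q_{\alpha_2}\kappa(Q_{\beta_1},Q_{\beta_2}).
\]
Substituting $\kappa(Q_a,Q_b)=-Q_aQ_b$ into all four terms produces the common factor $-Q_{\alpha_1}Q_{\alpha_2}Q_{\beta_1}Q_{\beta_2}$ with signs $(-,+,+,-)$, so the right-hand side vanishes identically.

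For the descent to $\SU n$, observe that each $Q_\alpha$ transforms homogeneously under the $\sn^1$-action, $Q_\alpha(e^{i\theta}z) = e^{i\theta}Q_\alpha(z)$, so every quotient $Q_\alpha/Q_\beta$ is $\sn^1$-invariant and descends along $\pi:\U n\to\SU n$. Since $\pi$ is a harmonic morphism with constant dilation $\lambda\equiv 1$, horizontal conformality and harmonicity are transferred without change (see Proposition 2.2 of \cite{Gud-Mon-Rat-1}), yielding the corresponding orthogonal harmonic family on $\SU n$. There is no serious obstacle in this proof; the only mild subtlety is writing out the quotient identity for $\kappa$ cleanly, after which everything collapses to bookkeeping with signs.
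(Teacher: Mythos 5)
Your proof is correct and follows essentially the same route as the paper: both arguments hinge on the single identity $\kappa(Q_\alpha,Q_\gamma)=-Q_\alpha Q_\gamma$ (the paper's equation (\ref{eqn-Q})), from which harmonicity of each quotient and the vanishing of $\kappa$ between any two quotients follow by the quotient formulas. The only difference is presentational — the paper delegates harmonicity to Lemma \ref{lemma-columns-dependent} and Theorem \ref{theorem-SU(n)} and leaves the orthogonality computation as a ``simple computation,'' whereas you carry out both verifications explicitly; your sign bookkeeping checks out.
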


\begin{proof} 
It is a direct consequence of Lemma  \ref{lemma-columns-dependent} that 
\begin{equation}\label{eqn-Q}
Q_\alpha Q_\beta+\kappa(Q_\alpha,Q_\beta)=0
\end{equation}
so according to Theorem \ref{theorem-SU(n)} all the elements of $\Omega$ are harmonic. Finally, a simple computation, repeately using equation (\ref{eqn-Q}), shows that $\kappa(f,h)=0$ for all $f,h\in\Omega$.
\end{proof}

For this special case we have the following useful result.  This tells us how the family $\Omega$ in Proposition \ref{proposition-orthogonal-family} provides a large collection of harmonic morphisms on the special unitary group $\SU n$.

\begin{theorem}\cite{Gud-8}\label{theo:orthogonal}
Let $(M,g)$ be a Riemannian manifold and
$$\Omega=\{\phi_k:M\to\cn\ |\ k=1,\dots ,n\}$$ be a finite orthogonal harmonic family on $M$.  Let $\Phi:M\to\cn^n$ be the map given by $\Phi=(\phi_1,\dots,\phi_n)$ and $U$ be an open subset of $\cn^n$ containing the image $\Phi(M)$ of $\Phi$. If $$\H=\{h_i:U\to\cn\ |\ i\in I\}$$ is a collection of holomorphic functions then 
$$
\widetilde\Omega=\{\psi:M\to\cn\ |\ \psi=h(\phi_1,\dots ,\phi_n),\ h\in\H\}
$$ is an orthogonal harmonic family on $M$.
\end{theorem}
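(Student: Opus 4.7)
The plan is to establish the statement by a direct chain rule computation, exploiting the fact that $h \in \H$ is holomorphic and hence has no antiholomorphic derivatives with respect to the coordinates $z_1, \ldots, z_n$ on $U \subset \cn^n$. Write $\psi = h(\phi_1, \ldots, \phi_n)$ and denote $h_k = \partial h/\partial z_k$ and $h_{kl} = \partial^2 h / \partial z_k \partial z_l$. Because $\nabla$ acts $\cn$-linearly on $T^{\cn}M$ and extends by the usual Leibniz rule, the chain rule for a holomorphic $h$ gives
\begin{equation*}
\nabla \psi = \sum_{k=1}^n h_k(\phi_1, \ldots, \phi_n)\, \nabla \phi_k.
\end{equation*}

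Applying the conformality operator $\kappa$, which is $\cn$-bilinear in its arguments, to two such compositions $\psi_1 = h_1 \circ \Phi$ and $\psi_2 = h_2 \circ \Phi$ yields
\begin{equation*}
\kappa(\psi_1, \psi_2) = \sum_{k,l=1}^n (\partial_k h_1)(\partial_l h_2)\, \kappa(\phi_k, \phi_l),
\end{equation*}
which vanishes identically because $\Omega$ is an orthogonal harmonic family, so $\kappa(\phi_k, \phi_l) = 0$ for all $k, l$. This handles the horizontal conformality half of the statement in one stroke.

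For the harmonicity half, I would derive the composition formula for the Laplace-Beltrami operator. Starting from $\tau(\psi) = \Div(\nabla \psi)$ and using $\nabla \psi = \sum_k h_k \nabla \phi_k$, one obtains after expanding $\Div$ by Leibniz
\begin{equation*}
\tau(\psi) = \sum_{k=1}^n h_k\, \tau(\phi_k) + \sum_{k,l=1}^n h_{kl}\, \kappa(\phi_k, \phi_l).
\end{equation*}
Crucially, because $h$ is holomorphic, no mixed $\partial_k \bar\partial_l h$ terms appear -- if $h$ were only smooth, such terms would contribute $\kappa(\phi_k, \bar\phi_l)$, which is generally nonzero. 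Since $\tau(\phi_k) = 0$ and $\kappa(\phi_k, \phi_l) = 0$ for all $k, l$ by hypothesis, both sums vanish and $\tau(\psi) = 0$.

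The main technical point, and really the only place any care is required, is justifying that for holomorphic $h$ the second-order part of $\tau(\psi)$ is indeed $\sum h_{kl}\kappa(\phi_k, \phi_l)$ rather than an expression containing $\kappa(\phi_k, \bar\phi_l)$ or $\kappa(\bar\phi_k, \bar\phi_l)$. This follows because $\nabla \psi$ can be computed purely in terms of the holomorphic derivatives $h_k$ (no $\bar\partial h$), so a second application of the connection only brings in $h_{kl}$. Once that observation is in place, both conclusions $\tau(\psi) = 0$ and $\kappa(\psi_1,\psi_2)=0$ reduce to the two defining identities of an orthogonal harmonic family, and the proof is complete.
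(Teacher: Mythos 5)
Your argument is correct. Note that the paper itself gives no proof of this theorem: it is quoted verbatim from \cite{Gud-8}, so there is no internal proof to compare against. Your chain--rule computation is the standard argument from that reference: holomorphy of $h$ eliminates all $\bar\partial$--derivatives, so that $\nabla\psi=\sum_k(\partial_k h\circ\Phi)\,\nabla\phi_k$, whence $\kappa(\psi_1,\psi_2)=\sum_{k,l}(\partial_k h_1)(\partial_l h_2)\,\kappa(\phi_k,\phi_l)$ and $\tau(\psi)=\sum_k(\partial_k h)\,\tau(\phi_k)+\sum_{k,l}(\partial_k\partial_l h)\,\kappa(\phi_k,\phi_l)$, both of which vanish by the defining properties of an orthogonal harmonic family; you correctly identify that the absence of $\kappa(\phi_k,\bar\phi_l)$ terms is the one point where holomorphy is essential. (Only a cosmetic remark: you use $h_1,h_2$ both for elements of $\H$ and $h_k$ for $\partial h/\partial z_k$, which collide notationally, though the displayed formulas make the intent unambiguous.)
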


\end{document}